\documentclass[11pt,reqno]{amsart}
\usepackage{mathrsfs}
\usepackage{graphicx}
\usepackage{amssymb}
\usepackage{amsmath}
\usepackage{amsthm}
\usepackage{amscd}
\usepackage{epsfig}

\oddsidemargin=6pt \evensidemargin=6pt \topmargin -10pt \textwidth
15.5true cm \textheight 22true cm

\parskip=0.15cm

\pdfoutput=1
\usepackage{amsfonts}
\usepackage{amsthm}
\usepackage{amscd}
\usepackage{amsmath}
\usepackage{txfonts}
\usepackage{mathrsfs}
\usepackage[colorlinks, linkcolor=blue]{hyperref}

\theoremstyle{definition}
\newtheorem{defn}{Definition}[section]

\theoremstyle{plain}
\newtheorem{thm}{Theorem}[section]
\newtheorem{prop}[thm]{Proposition}
\newtheorem{lem}[thm]{Lemma}

\newtheorem{cor}[thm]{Corollary}
\newtheorem{exa}[thm]{Example}

\newtheorem*{theorem}{Theorem}

\newtheorem*{clm}{Claim}

\theoremstyle{remark}
\newtheorem*{rmk}{Remark}

%



%
%
%
%
%
\newcommand{\talpha}{\tilde{\alpha}}
\newcommand{\hgamma}{\widehat{\gamma}}
\newcommand{\hxi}{\widehat{\xi}}

\renewcommand{\le}{\leqslant}
\renewcommand{\ge}{\geqslant}

\newcommand{\mbbe}{\mathbb{E}}

\newcommand{\mbbh}{\mathbb{H}}

\newcommand{\mbbn}{\mathbb{N}}

\newcommand{\mbbr}{\mathbb{R}}

\newcommand{\mbbz}{\mathbb{Z}}

\newcommand{\mclb}{\mathcal{B}}
\newcommand{\mclc}{\mathcal{C}}

\newcommand{\mclf}{\mathcal{F}}
\newcommand{\mclg}{\mathcal{G}}

\newcommand{\mcll}{\mathcal{L}}
\newcommand{\mclm}{\mathcal{M}}

\newcommand{\mclv}{\mathcal{V}}

\newcommand{\mscb}{\mathscr{B}}
\newcommand{\mscc}{\mathscr{C}}

\newcommand{\msce}{\mathscr{E}}
\newcommand{\mscf}{\mathscr{F}}

\newcommand{\dif}{\mathrm{d}}
\newcommand{\Dif}{\mathrm{D}}

\newcommand{\supp}{\mathrm{supp}}

\DeclareMathOperator{\diam}{diam} 



\newcommand{\qtext}[1]{\quad\text{#1}\quad}

\newcommand{\textand}{\quad\text{and}\quad}

\numberwithin{equation}{section}

\begin{document}

\title[Local stable and unstable sets for positive entropy $C^1$ dynamical systems]
 {Local stable and unstable sets for positive entropy $C^1$ dynamical systems} 

\author[S. Feng]{Shilin Feng}
\address{S. Feng: College of Mathematics, Sichuan University, Chengdu, China, 610064}
\email{fengshilinscu@163.com}

\author[R. Gao]{Rui Gao}
\address{R. Gao: College of Mathematics, Sichuan University, Chengdu, China, 610064}
\email{gaoruimath@scu.edu.cn}

\author[W. Huang]{Wen Huang}
\address{W. Huang: Wu Wen-Tsun Key Laboratory of Mathematics, USTC, Chinese Academy of Sciences and Department of Mathematics, University of Science and
Technology of China, Hefei, Anhui 230026, China}
\email{wenh@mail.ustc.edu.cn}

\author[Z. Lian]{Zeng Lian}
\address{Z. Lian: College of Mathematics, Sichuan University, Chengdu, China, 610064}
\email{ZengLian@gmail.com}

\subjclass[2010]{37A35, 37C45, 37D25}
\thanks{Gao is partially supported by NNSF of China (11701394). Huang is partially supported by NNSF of China (11431012,11731003). Lian is partially supported
by NNSF of China (11671279, 11541003). }


\maketitle

\begin{abstract}
For any $C^1$ diffeomorphism on a smooth compact Riemannian manifold that admits an ergodic measure with positive entropy,  a lower bound of the Hausdorff dimension for the local stable and unstable sets is given in terms of the measure-theoretic entropy and the maximal Lyapunov exponent. The mainline of
 our approach to this result is under the settings of topological dynamical systems, which is also applicable to infinite dimensional $C^1$ dynamical systems.
\end{abstract}


\section{Introduction}\label{intro}

Let $M$ be a compact manifold and let $f:M\to M$ be a $C^{1+\alpha}$ diffeomorphism for some $\alpha>0$. The stable manifold theory developed by Pesin among others \cite{FHY,Ka,Pe76,Pe77,Ru79} asserts that, roughly speaking, if $f$ is {\it non-uniformly hyperbolic} with respect to an $f$-invariant Borel probability measure $\mu$, then the stable and unstable sets for $\mu$-a.e. $x\in M$ are immersed submanifolds with complementary dimensions. To indicate that the $C^{1+\alpha}$  regularity hypothesis in Pesin's stable manifold theory is essential, Pugh \cite{Pu} gave an example of a $C^1$-diffeomorphism which admits an orbit with nonzero Lyapunov exponents but no invariant manifolds.

Pugh's counter-example is about the non-existence of invariant manifolds for a single orbit of a concrete $C^1$ diffeomorphism. More recently, Bonatti, Crovisier and Shinohara \cite{BCS} showed that non-existence of invariant manifolds is a generic phenomenon in the $C^1$ category. More precisely, for a diffeomorphism $f:M\to M$  on some Riemannian manifold $M$, let $d$ be the metric on $M$ induced by its Riemannian structure. Then for $x\in M$, the
{\it stable set} of $x$ for $f$ is defined by
$$W^s(x,f) := \{y \in M : d(f^n(x), f^n(y))\to 0 \text{ as } n \to +\infty\}.$$
The {\it unstable set} $W^u(x,f)$ is simply defined as the stable set of $x$ for $f^{-1}$. In \cite[Theorem~2]{BCS} the authors proved the following statement.

\begin{theorem}
  Let $M$ be a smooth compact manifold with $\dim M\ge 3$ and let  ${\text{Diff}}^1(M)$ be endowed with the $C^1$-topology. Then there exists a non-empty open set $\mathcal{U}\subset {\text{Diff}}^1(M)$ and a dense $G_\delta$ subset $\mathcal{R}\subset \mathcal{U}$ satisfying the following property: each $f\in \mathcal{R}$ admits a hyperbolic and ergodic Borel probability measure $\mu$ such that
$$W^s(x,f)=W^u(x,f)=\{x\}$$
for every point $x$ in the support $\supp(\mu)$ of $\mu$.
\end{theorem}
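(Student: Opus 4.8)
The plan is to prove the statement in two stages: first produce the open set $\mathcal{U}$ by exhibiting a robust mechanism that makes the $C^{1+\alpha}$ hypothesis of Pesin theory genuinely necessary, and then run a Baire-category argument inside $\mathcal{U}$ to pin down the residual set $\mathcal{R}$. For the open set, I would construct a concrete model diffeomorphism $f_0$ carrying a compact invariant set on which the tangent dynamics admits a strong stable direction, a strong unstable direction, and a one- or two-dimensional ``critical'' direction along which the \emph{finite-time} Lyapunov exponents oscillate in sign, even though their long-term average stays bounded away from zero. The design goal is that the Oseledets stable/unstable splitting of the measure built later fails to be dominated: the angle between the stable and unstable bundles is not uniformly bounded below along the orbit. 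Since the existence of such strong/weak splittings with the relevant cone estimates is a $C^1$-open condition, a $C^1$-neighborhood of $f_0$ furnishes $\mathcal{U}$, and the dimension hypothesis $\dim M\ge 3$ enters precisely here, since a nontrivial oscillating critical direction can coexist with a contracting and an expanding direction only when there is enough ambient dimension.

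Next I would produce, for a generic $f\in\mathcal{U}$, an ergodic hyperbolic measure $\mu$ with $W^s(x,f)=W^u(x,f)=\{x\}$ for all $x\in\supp(\mu)$. The natural route is to realize $\mu$ as a weak-$*$ limit of periodic measures obtained through Mañé's ergodic closing lemma together with Franks-type perturbations of the derivative along periodic orbits, controlling those orbits so that their averaged exponents remain uniformly away from zero (forcing hyperbolicity of the limit) while the angle between their stable and unstable bundles collapses (forcing the failure of domination). The collapse of the invariant sets is then argued by contradiction: if $W^s(x,f)$ contained a nontrivial curve, its forward iterates would, because of the sign-oscillation built into the critical direction, have to be expanded at infinitely many scales, which is incompatible with the defining contraction of the stable set. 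Hence $W^s(x,f)=\{x\}$, and the same argument applied to $f^{-1}$ gives $W^u(x,f)=\{x\}$.

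For the genericity I would encode the target property through countably many conditions amenable to Baire category. Writing the desired conclusion as ``$f$ admits an ergodic measure, approximated by periodic orbits to within $1/n$, whose finite-orbit data simultaneously witness hyperbolicity and the collapse of any invariant curve at scale $1/n$,'' each such condition is $C^1$-open by the stability of finite-orbit estimates and $C^1$-dense inside $\mathcal{U}$ by the perturbation lemmas of the previous step. Intersecting these open-dense conditions over $n\in\mathbb{N}$ yields a dense $G_\delta$ subset $\mathcal{R}\subset\mathcal{U}$, and any $f\in\mathcal{R}$ carries the required measure.

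The principal obstacle is the inherent tension between hyperbolicity and the destruction of invariant manifolds: nonzero Lyapunov exponents normally \emph{generate} stable and unstable manifolds, so the entire argument hinges on keeping $\mu$ hyperbolic yet non-dominated with rapidly degenerating angles. Sustaining this delicate non-uniform structure both under the $C^1$ perturbations used for density and through the weak-$*$ limit---without accidentally restoring domination (which would reinstate the manifolds) or driving the averaged exponents to zero (which would destroy hyperbolicity)---is the technical heart of the proof, and it is what dictates the careful scale-by-scale bookkeeping in the construction of the periodic orbits.
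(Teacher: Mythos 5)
A point of orientation first: the paper you were given does not prove this statement at all. It is Theorem~2 of Bonatti--Crovisier--Shinohara \cite{BCS}, quoted verbatim in the introduction as motivation, and the authors immediately note (citing \cite[Remark~3]{BCS}) that in that construction the dynamics on $\supp(\mu)$ is a generalized adding machine (odometer), hence uniquely ergodic with $h_\mu(f)=0$. So the only possible comparison is with the cited proof in \cite{BCS}, whose coarse outline --- an open set $\mathcal{U}$ exhibiting robust non-domination (this is where $\dim M\ge 3$ enters), Franks/Gourmelon-type perturbations of the derivative along periodic orbits, a Baire-category scheme, and a limit measure that is hyperbolic while the angles between its stable and unstable directions degenerate --- your sketch does reproduce at a high level.

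That said, your proposal has genuine gaps at exactly the technically decisive points. First, your contradiction argument for $W^s(x,f)=\{x\}$ (``if the stable set contained a curve, its iterates would be expanded at infinitely many scales'') requires the finite-time exponent oscillation to hold along the forward orbit of \emph{every} point of $\supp(\mu)$, at all sufficiently small scales --- not merely at $\mu$-a.e.\ point; a hyperbolic measure with non-dominated Oseledets splitting does not by itself have trivial stable sets. In \cite{BCS} this uniformity is precisely what the nested periodic-box (odometer) structure of the support delivers: minimality and the prescribed return times force every point of the support to inherit the same oscillation, and your sketch never constructs, or even mentions, this structure. Second, a weak-$*$ limit of periodic measures need not be ergodic; in \cite{BCS} ergodicity comes for free because the limit dynamics is an odometer, hence uniquely ergodic --- again relying on the structure you omit. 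Third, hyperbolicity of the limit measure is delicate for the very reason you highlight: without domination, Oseledets data do not pass continuously to weak-$*$ limits, so ``averaged exponents along the periodic orbits stay away from zero'' does not immediately yield nonzero exponents for $\mu$. Finally, your Baire encoding asserts that ``admits an ergodic measure approximated by periodic orbits whose finite-orbit data witness collapse of invariant curves'' is a $C^1$-open condition; existence-of-measure statements of this kind are not open in any evident way, and the genericity argument must instead be run through finite, quantified conditions on the perturbation boxes themselves. As it stands, your text is a plausible research plan that names the right tools, but the steps it defers constitute the proof.
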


According to \cite[Remark~3]{BCS}, for each $f\in \mathcal{R}$ in the above Theorem, the dynamics of $f$ on  $\supp(\mu)$ is a generalized adding machine (also called odometer or solenoid, see for example \cite{BS} or \cite{MM} for the definition); therefore, it is uniquely ergodic and the measure-theoretic entropy $h_\mu(f)=0$. Then a natural question arises: given $f\in {\text{Diff}}^1(M)$ and $f$-invariant $\mu$ with $h_\mu(f)>0$, what can we say about the structure of $W^s(x,f)$ and $W^u(x,f)$ for $\mu$-a.e. $x\in M$?

As a partial answer of the question above, the purpose of this paper is to investigate the Hausdorff dimension of the local stable sets and local unstable sets for  $C^1$ dynamical systems with positive measure-theoretic entropy (see also problem 6.2 in \cite{FGH}).

Throughout  this  paper, by a {\it topological dynamical system} $(X,T)$ (TDS for short) we mean a compact metric space $(X,d)$ with a homeomorphism map $T$ from $X$ onto itself, where $d$ refers to the metric on $X$. For a TDS $(X,T)$, given $x\in X$ and $\delta>0$, the {\it $\delta$-stable set} of $x$ is defined as
\begin{equation}\label{eq:local stable set}
  W_\delta^s(x,T)=\big\{y\in X:  d(T^n x,T^n y)\le \delta, ~\forall n\ge 0 \ \text{ and }\lim\limits_{n\rightarrow +\infty} d(T^n x,T^n y)=0\big\}\,.
\end{equation}
Similarly,  the {\it  $\delta$-unstable set} of $x$ is defined as
\begin{equation}\label{eq:local unstable set}
 W_\delta^u(x,T)=\big\{y\in X:  d(T^{-n}x,T^{-n}y)\le \delta, ~\forall n\ge 0 \ \text{ and } \lim\limits_{n\rightarrow +\infty} d(T^{-n}x, T^{-n}y)=0 \big\}\,.
\end{equation}
Clearly, by definition, $W_\delta^u(x,T)=W_\delta^s(x,T^{-1})$. $\delta$-stable(resp. unstable) sets for some $\delta>0$ are collectively called {\it local} stable(resp. unstable) sets.

Let $M$ be a smooth compact Riemannian manifold. The Riemanian structure on $M$ induces a complete metric $d$ in the usual way. The Hausdorff dimension for subsets of $M$ defined by this metric is denoted by $\dim_H(\cdot)$. Given a $C^1$ diffeomorphism $f$ on $M$,  let $\Dif f^n(x)$ denote the tangent map of $f^n$ at $x$ and  $\|\Dif f^n(x)\|$ the operator norm of $\Dif f^n(x)$ induced by the Riemannian metric.  For an $f$-invariant Borel probability measure  $\mu$ on $M$, let us define the {\it maximal Lyapunov exponent} of $f$ w.r.t. $\mu$ by
\begin{equation}\label{eq:Lyapunov exp C1}
  \chi_\mu(f) :=\lim_{n\to +\infty} \frac{1}{n}\int_M \log^+\|\Dif f^n(x)\| \dif \mu(x)\,,
\end{equation}
where $\log^+ t=\max\{\log t,0\}$ for $t\ge 0$. Let $h_\mu(f)$ denote the measure-theoretic entropy of the measure preserving system $(M,f,\mu)$. The main result in this paper is as follows:

\begin{thm}\label{thm:main C1 finite}
Let $M$ be a smooth compact Riemannian manifold and $f:M\to M$ a $C^1$ diffeomorphism. Let $\mu$ be an ergodic $f$-invariant Borel probability measure on $M$ with positive entropy. Then the following holds for every $\delta>0$:
\begin{equation}\label{eq:main C1 finite}
  \dim_H\big( W_\delta^u(x,f)\big)\ge\frac{h_\mu(f)}{\chi_\mu(f)} \textand \dim_H \big( W_\delta^s(x,f)\big)\ge\frac{h_\mu(f)}{\chi_\mu(f^{-1})}  \,,\quad \mu\text{-a.e.}~x\in M \,.
\end{equation}
\end{thm}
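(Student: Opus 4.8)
The plan is to reduce to a single inequality and then build an explicit Cantor subset of the local unstable set to which I apply the mass distribution principle. Since $W_\delta^s(x,f)=W_\delta^u(x,f^{-1})$ and $h_\mu(f^{-1})=h_\mu(f)$, applying the unstable estimate to $f^{-1}$ yields the stable estimate; thus it suffices to prove $\dim_H\big(W_\delta^u(x,f)\big)\ge h_\mu(f)/\chi_\mu(f)$ for $\mu$-a.e. $x$. First I would record two preliminary facts. By Ruelle's inequality (valid in the $C^1$ category) one has $h_\mu(f)\le(\dim M)\,\chi_\mu(f)$, so $\chi_\mu(f)>0$ and the right-hand side is finite. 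Moreover, since $\log^+\|\Dif f^n(x)\|$ is a subadditive cocycle, Kingman's subadditive ergodic theorem together with ergodicity gives $\tfrac1n\log\|\Dif f^n(x)\|\to\chi_\mu(f)$ for $\mu$-a.e. $x$; this is the sharp pointwise expansion rate that must appear in the denominator. Note that although $f$ is only $C^1$, so that invariant manifolds need not exist, the derivative cocycle is still log-integrable and the top Lyapunov exponent is well defined and equals $\chi_\mu(f)$.

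Fix a small $\gamma>0$. The construction produces a nested sequence of finite families of ``dynamical cylinders'' inside $W_\delta^u(x,f)$, organised along a rapidly increasing sequence of backward times $n_1<n_2<\cdots$. The \emph{branching} at each level is supplied by positive entropy: using the Brin--Katok local entropy formula for $f^{-1}$ (equivalently, Katok's characterisation of entropy by $(n,\epsilon)$-separated sets), a backward Bowen ball of length $n$ has $\mu$-measure at most $e^{-n(h_\mu(f)-\gamma)}$, so a region of nearly full measure splits into at least $e^{n(h_\mu(f)-\gamma)}$ essentially disjoint pieces. I would realise the levels as backward Bowen balls with dynamical scales $\epsilon_k\downarrow0$, which forces the points selected in the intersection to have backward orbits that genuinely converge to that of $x$ (not merely stay within $\delta$), guaranteeing membership in $W_\delta^u(x,f)$; here one uses ergodicity and Poincar\'e recurrence of the backward orbit of a $\mu$-typical $x$ to graft the branching structure near $f^{-n}x$ and return it to a fixed small neighbourhood of $x$. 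Distributing unit mass equally among the branches defines a probability measure $\nu$ supported on the resulting Cantor set $E\subset W_\delta^u(x,f)$.

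The dimension bound then comes from the mass distribution principle: it suffices to show that for $\nu$-a.e.\ $y$ one has $\nu\big(B(y,r)\big)\le r^{\,s}$ for all small $r$, with $s=(h_\mu(f)-\gamma)/(\chi_\mu(f)+\gamma)$, since letting $\gamma\to0$ then gives $\dim_H E\ge h_\mu(f)/\chi_\mu(f)$. To estimate $\nu\big(B(y,r)\big)$ one converts the geometric scale $r$ into a dynamical time $n=n(y,r)$: using the expansion bound $\|\Dif f^n\|\le e^{n(\chi_\mu(f)+\gamma)}$ one shows that a geometric ball of radius $r\approx e^{-n(\chi_\mu(f)+\gamma)}$ meets only a controlled number of level-$n$ cylinders, each carrying mass at most $e^{-n(h_\mu(f)-\gamma)}$; combining the two exponential rates yields exactly $\nu\big(B(y,r)\big)\lesssim r^{\,s}$.

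The main obstacle is this last comparison between geometric and dynamical balls at the \emph{sharp} rate $\chi_\mu(f)$. In the $C^{1+\alpha}$ setting one propagates distances between nearby points using H\"older continuity of the derivative and Lyapunov charts; neither is available here, and a naive mean-value estimate along a path joining two nearby points only controls distortion by $\sup\log\|\Dif f\|$, which would replace $\chi_\mu(f)$ by a much larger quantity and destroy sharpness. I expect this to be resolved precisely by the paper's topological-dynamical framework, which separates the entropy/branching content from the smooth distortion estimates: one restricts attention to a ``good'' set of large measure on which the subadditive convergence is uniform (Egorov), passes to a multi-step cocycle $\log\|\Dif f^m\|$ for large fixed $m$ and applies the Birkhoff ergodic theorem to convert Kingman's sharp asymptotic rate into orbitwise uniform bounds, and selects the branch points inside this good set by a Pliss-type/pigeonhole argument. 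This is the delicate step; the remaining counting and the mass distribution estimate are then routine.
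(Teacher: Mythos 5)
Your reduction of the stable estimate to the unstable one via $f^{-1}$, and your use of Margulis--Ruelle to see $\chi_\mu(f)>0$, agree with the paper. But the core of your construction --- the branching mechanism --- has a genuine gap, and I believe it cannot be repaired in the form you describe. You branch using \emph{backward} Bowen balls and the \emph{ambient} measure $\mu$: each backward Bowen ball of length $n$ has $\mu$-measure at most $e^{-n(h_\mu(f)-\gamma)}$, so a set of nearly full measure splits into $e^{n(h_\mu(f)-\gamma)}$ disjoint pieces. True --- but those pieces are spread over all of $M$, across \emph{different} unstable sets. For your Cantor set to lie in $W_\delta^u(x,f)$, every selected cylinder must sit inside the backward tube $B_n^-(x,\eta):=\{y: d(f^{-j}x,f^{-j}y)\le\eta,\ 0\le j\le n\}$, and inside that tube the measure-pigeonhole count is vacuous: by Brin--Katok applied to $f^{-1}$, the tube itself has $\mu$-measure between $e^{-n(h_\mu(f)+\gamma)}$ and $e^{-n(h_\mu(f)-\gamma)}$, the same order as each small piece it contains, so the ratio gives at least $e^{-2n\gamma}<1$ pieces, i.e.\ no branching at all. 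Concretely, for a hyperbolic toral automorphism with Lebesgue measure, $B_n^-(x,\eta)$ is essentially a rectangle of size $\eta\times\eta e^{-n\chi}$ and each backward Bowen ball inside it has size $\epsilon\times\epsilon e^{-n\chi}$, so the number of disjoint pieces is $(\eta/\epsilon)^2$, bounded in $n$. The entropy carried by a single unstable set is invisible to the pair (ambient measure, backward balls); it becomes visible only when one takes a \emph{conditional} measure supported on the unstable set and measures its decay along \emph{forward} Bowen balls. That is precisely the paper's Theorem~\ref{thm:local}: a Rohlin--Sinai-type measurable partition $\xi$ with $\overline{\xi(x)}\subset W_\delta^u(x,T)$ is built from shifted refining partitions, and a conditional Shannon--McMillan--Breiman theorem together with a Brin--Katok-type argument shows that the disintegration satisfies $\mu_x\big(B_n(y,\epsilon,T)\big)\approx e^{-nh_\mu(T)}$ for $\mu_x$-a.e.\ $y$. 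Nothing in your proposal produces this, and the appeal to ``grafting via Poincar\'e recurrence'' cannot substitute for it: in the $C^1$/topological setting there is no shadowing or specification with which to realize such grafting --- this is exactly the obstruction illustrated by the Bonatti--Crovisier--Shinohara examples the paper cites.

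The second difficulty you flag, comparing geometric balls with dynamical balls at the sharp rate $\chi_\mu(f)$, is real, but your proposed fix (Egorov plus a Pliss-type selection for $\log\|\Dif f^m\|$) also has a flaw: the points at which you need the expansion bound are the points of your Cantor set, a $\mu$-null set, so almost-everywhere statements about $\frac1n\log\|\Dif f^n\|$ do not apply to them. The paper avoids this by controlling expansion \emph{over a whole Bowen ball around a typical point}: Kifer's quantity $\mcll_n^r(y)=\sup_{z\in B_n(y,r)\setminus\{y\}} d(T^ny,T^nz)/d(y,z)$ is subadditive, Kingman's theorem bounds its growth rate by $\chi_\mu(T)$ at $\mu$-a.e.\ $y$, and an induction yields the inclusion $B(y,\eta e^{-n\lambda})\subset B_n(y,\epsilon,T)$ for all large $n$ and any $\lambda>\chi_\mu(T)$ (Lemma~\ref{lem:ball}). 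Applied at $\mu_x$-a.e.\ $y$ (which is legitimate since the exceptional set is $\mu$-null, hence $\mu_x$-null for a.e.\ $x$), this converts the conditional local entropy into the lower local dimension bound $\underline{d}_{\mu_x}(y)\ge h_\mu(T)/\lambda$, and the mass distribution principle finishes. So the dimension step of your plan can be repaired along the paper's lines, but the branching step is a missing idea, not a technicality: without the subordinate partition and its conditional local entropy formula, the construction never gets off the ground.
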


\begin{rmk}\mbox{}
\begin{enumerate}
  \item According to Margulis-Ruelle inequality, the assumption $h_\mu(f)>0$ guarantees that both $\chi_\mu(f)$ and $\chi_\mu(f^{-1})$ are strictly positive.
  \item Without further assumption on the system $(M,f,\mu)$, the lower bound estimate of Hausdorff dimensions in \eqref{eq:main C1 finite} cannot be improved, even if $f$ has distinct positive(or negative) Lyapunov exponents. For example, consider the product system $(M,f,\mu)$ of two systems $(M_i,f_i,\mu_i)$, $i=1,2$ as follows. $(M_1,f_1,\mu_1)$ satisfies that $\mu_1$ is an ergodic hyperbolic measure for $(M_1,f_1)$ such that $$W^u(p,f_1)=W^s(p,f_1)=\{p\}$$ for each $p\in\supp(\mu_1)$, whose existence is guaranteed by Theorem 2 in \cite{BCS} we cited before; in particular, $h_{\mu_1}(f_1)=0$. $(M_2,f_2)$ is a two-dimensional hyperbolic torus automorphism induced by some matrix $A\in \mathrm{SL}(2,\mbbz)$
  and $\mu_2$ is the Lebesgue measure on $M_2$. Let $\lambda>\lambda^{-1}$ be the two eigenvalues of $A$ and we further require that $$\lambda>\max_{p\in N}\max\{\|\Dif f_1(p)\|,\|\Dif f_1^{-1}(p)\|\},$$ say
  $A=\begin{pmatrix}
    n & n+1 \\
    n-1 & n
  \end{pmatrix}$ for large $n$. Then $$\chi_{\mu}(f^{\pm 1})=\chi_{\mu_2}(f_2^{\pm 1})=h_\mu(f)=h_{\mu_2}(f_2)=\log\lambda,$$
   and $\dim_H\big( W^u(x,f)\big)=\dim_H \big( W^s(x,f) \big)=1$ for any $x\in\supp (\mu_1)\times M_2$.
\end{enumerate}

\end{rmk}

  In fact, we shall prove a more general statement for TDS, which contains Theorem~\ref{thm:main C1 finite} as a special case. Let $(X,T)$ be a TDS with metric $d$ as before. Given $n\ge 1$ and $x\in X$, the {\it pointwise Lipschitz constant} of $T^n$ at $x$ is defined as follows:
\begin{equation}\label{eq:Lip const point}
  \mcll_n(x):=\lim_{r\searrow 0}  \left(\sup_{y\in B(x,r)\setminus\{x\}}\frac{d(T^n x,T^n y)}{d(x,y)} \right) \in [0,+\infty] \,,
\end{equation}
where $B(x,r):=\{y\in X: d(x,y)<r\}$ denotes the open ball of radius $r$ centered at $x$, and we adopt the convention that $\sup\varnothing=0$ in \eqref{eq:Lip const point}.  Then we have:

\begin{thm}\label{thm:main TDS}
Let $(X,T)$ be a TDS and let $\mu$ be an ergodic $T$-invariant Borel probability measure on $X$ with positive entropy. Suppose that there exists $r>0$ such that
\begin{equation}\label{eq:Lip cond}
  \int_X \log^+\left(\sup_{y\in B(x,r)\setminus\{x\}}\frac{d(Tx,Ty)}{d(x,y)}\right)\dif \mu <\infty \,.
\end{equation}
Then
\begin{equation}\label{eq:maximal Lyapunov exp}
  \chi_\mu(T):=\lim_{n\to +\infty} \frac{1}{n}\int_X \log^+\mcll_n \dif\mu < \infty
\end{equation}
is well-defined, and for each $\delta>0$ we have:
\begin{equation}\label{eq:unstable dim lower}
  \dim_H \big(W_\delta^u(x,T)\big)\ge\frac{h_\mu(T)}{\chi_\mu(T)} \,,\quad \mu\text{-a.e.}~x\in X \,.
\end{equation}
\end{thm}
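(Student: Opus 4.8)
The plan is to fix $\eta>0$, construct for $\mu$-a.e.\ $x$ a Cantor subset of $W_\delta^u(x,T)$ carrying a natural probability measure $\nu_x$, and apply the mass distribution principle to obtain $\dim_H W_\delta^u(x,T)\ge (h_\mu(T)-\eta)/(\chi_\mu(T)+\eta)$; letting $\eta\downarrow0$ then yields \eqref{eq:unstable dim lower}. First I would record that $\chi_\mu(T)$ is well defined and finite: the chain rule $\mcll_{m+n}(x)\le\mcll_m(T^n x)\,\mcll_n(x)$ makes $\log^+\mcll_n$ a subadditive cocycle over $T$, so $n\mapsto\int_X\log^+\mcll_n\,\dif\mu$ is subadditive and, its $n=1$ term being finite by \eqref{eq:Lip cond}, Fekete's lemma gives the limit \eqref{eq:maximal Lyapunov exp}. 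Kingman's subadditive ergodic theorem then gives $\tfrac1n\log^+\mcll_n\to\chi_\mu(T)$ $\mu$-a.e.\ and in $L^1$; in particular, for every large $n$ the set $G_n:=\{z:\mcll_n(z)\le e^{n(\chi_\mu(T)+\eta)}\}$ has measure close to $1$, which I will use to keep the expansion of the pieces under control.

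The construction separates the two resources: \emph{entropy supplies multiplicity, the Lyapunov exponent supplies scale}. Along window lengths $n_k\to\infty$ I seek, inside $B(x,\delta)$, families $F_k$ of at least $e^{n_k(h_\mu(T)-\eta)}$ points that (i) are pairwise $(n_k,\epsilon)$-separated under forward iteration of $T$, (ii) lie in $G_{n_k}$, and (iii) belong to $W_\delta^u(x,T)$. At the small scales relevant to the construction, (i) together with the expansion bound (ii) forces the controlling relation
\[
d(y,y')\ \gtrsim\ \frac{d(T^{n_k}y,T^{n_k}y')}{\mcll_{n_k}(y)}\ \gtrsim\ \epsilon\,e^{-n_k(\chi_\mu(T)+\eta)}\qquad(y\ne y'\in F_k),
\]
so the points of $F_k$ are $\asymp\epsilon e^{-n_k(\chi_\mu(T)+\eta)}$-separated at time $0$; meanwhile (i) lets me nest the $F_k$ into a Moran tree, a point of $F_{k+1}$ refining the point of $F_k$ whose forward orbit it $\epsilon$-shadows up to time $n_k$. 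Spreading mass uniformly over the tree defines $\nu_x$, giving each level-$k$ cylinder mass $\le e^{-n_k(h_\mu(T)-\eta)}$.

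Conditions (i) and (ii) are the accessible ones: the count $e^{n_k(h_\mu(T)-\eta)}$ of forward $(n_k,\epsilon)$-separated segments comes from positive entropy through Katok's entropy formula (equivalently the Brin--Katok theorem), and intersecting with $G_{n_k}$ costs only a small fraction by the first paragraph. The decisive and delicate requirement is (iii): the chosen segments must be steered so that their \emph{backward} orbits all shadow, and converge to, the backward orbit of the \emph{single} point $x$. I would secure this by a Poincar\'e recurrence and pigeonhole argument — partitioning the separated segments by the location of their time-$(-N)$ images and retaining a largest class returning near $T^{-N}x$ — while letting $\epsilon=\epsilon_k\downarrow0$ and the shadowing tolerances shrink with $k$, so that exponentially many branches survive yet $d(T^{-n}x,T^{-n}y)\to0$ for every branch point $y$. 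A soft step (the inequality is a $T$-invariant a.e.\ statement, so a positive-measure conclusion upgrades to a $\mu$-a.e.\ one by ergodicity) then delivers the a.e.\ assertion.

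To finish, for $\rho$ comparable to the level-$k$ scale $\epsilon e^{-n_k(\chi_\mu(T)+\eta)}$ the $\asymp\rho$-separation of the points of $F_k$ shows that $B(z,\rho)$ meets only boundedly many level-$k$ cylinders, whence $\nu_x(B(z,\rho))\lesssim e^{-n_k(h_\mu(T)-\eta)}\lesssim\rho^{(h_\mu(T)-\eta)/(\chi_\mu(T)+\eta)}$; the mass distribution principle gives the dimension bound and $\eta\downarrow0$ completes the argument. I expect the crux to be condition (iii) — realizing the full exponential complexity measured by $h_\mu(T)$ \emph{inside} a family that is simultaneously backward-asymptotic to one orbit, i.e.\ marrying a quantitative (dimension) estimate to a genuinely asymptotic (membership) requirement. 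The persistent technical nuisance is that $\mcll_n$ is only an infinitesimal Lipschitz constant valid below the radius $r$ of \eqref{eq:Lip cond}, so the construction must keep every orbit segment, and every pair compared, within that radius, which constrains the admissible $\delta$, $\epsilon_k$ and window lengths $n_k$.
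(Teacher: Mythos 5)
Your opening paragraph (subadditivity of $\log^+\mcll_n$ via the chain rule, Fekete plus Kingman, hence \eqref{eq:maximal Lyapunov exp} is well defined and finite under \eqref{eq:Lip cond}) is correct, and your endgame (mass distribution principle) is the same as the paper's. But the core of your construction has a genuine gap, located exactly at the step you yourself call decisive, condition (iii). All of your counting inputs (Katok's entropy formula, Brin--Katok, Egorov) are $\mu$-measure statements: they can only produce $(n_k,\epsilon)$-separated points inside sets of \emph{positive} $\mu$-measure, whereas $W^u_\delta(x,T)$ is typically a null set. Your pigeonhole repair does not bridge this. Retaining the largest class of segments according to their time-$(-N)$ location yields points close to \emph{some} common backward location, not to $T^{-N}x$; and forcing closeness to the actual backward orbit of $x$ at times $1,\dots,m_k$ with tolerances that shrink in the time index costs a factor comparable to the measure of the corresponding backward shadowing set, roughly $e^{-m_k h_\mu(T)}$ (since $h_\mu(T^{-1})=h_\mu(T)$), with no general lower bound once the radii shrink. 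Worse, the Moran-tree nesting requires, at every stage, exponentially many separated points inside the intersection of a forward Bowen ball (or a tiny $d$-ball) around a previously chosen branch point with the backward shadowing set of $x$; in a general TDS there is no lower bound --- not even nonemptiness --- for such intersections, since that is precisely a local product structure, which positive entropy does not provide. This is why the paper does not attempt any Cantor-set construction: it builds a measurable partition $\xi$ with $\overline{\xi(x)}\subset W^u_\delta(x,T)$ (a Rohlin--Sinai-type partition $\xi=\mathcal{P}^-$ with carefully spaced iterates $T^{k_p}\beta_p$, Theorem~\ref{thm:local}), proves a conditional Shannon--McMillan--Breiman theorem (Proposition~\ref{prop:SMB local}, Corollary~\ref{cor:SMB local}) and a Brin--Katok-type lower bound (Lemma~\ref{lem:local entropy lower bound}) to show the disintegration $\{\mu_x\}$ has local entropy exactly $h_\mu(T)$, and then uses $\mu_x$ itself --- which lives on $\xi(x)\subset W^u_\delta(x,T)$ by construction --- as the mass distribution. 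The marriage of entropy with backward asymptoticity is accomplished by disintegration, not by selecting orbit segments.

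There is a second, independent gap in your separation-to-scale step: the displayed inequality $d(y,y')\gtrsim d(T^{n_k}y,T^{n_k}y')/\mcll_{n_k}(y)$ is false at definite scales, because $\mcll_{n_k}$ is an infinitesimal (pointwise) Lipschitz constant; consequently your sets $G_{n_k}$ control nothing about the geometry of your tree. You flag this as a ``technical nuisance'' but do not resolve it, and resolving it is not cosmetic: the paper devotes \S~\ref{subse:max exp} to it, introducing Kifer's finite-scale constants $\mcll_n^r$ (suprema over Bowen balls of radius $r$), proving an exchange-of-limits lemma for monotone subadditive families (Lemma~\ref{lem:mono subadd}, Proposition~\ref{prop:Kifer}) to get $\lim_{m\to\infty}\Lambda^{r_m}=\chi$ a.e., and deducing Lemma~\ref{lem:ball}, the inclusion $B(x,\eta e^{-n\lambda})\subset B_n(x,\epsilon)$ for $\lambda>\chi_\mu(T)$, which is the correct bridge between Bowen balls and metric balls (note it goes in the opposite direction from your inequality: small $d$-balls sit inside Bowen balls, so local dimension is bounded below by local entropy over $\lambda$). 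Finally, your closing ``ergodicity upgrade'' is also not automatic: the statement $\dim_H W^u_\delta(x,T)\ge c$ is not obviously a $T$-invariant function of $x$, since $T\big(W^u_\delta(x,T)\big)\ne W^u_\delta(Tx,T)$ in general and $T$ is merely a homeomorphism; the paper's argument produces the a.e.\ statement directly and never needs such an upgrade.
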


\begin{rmk}\mbox{}
\begin{enumerate}
  \item We call $\chi_\mu(T)$ defined by \eqref{eq:maximal Lyapunov exp} the {\it maximal Lyapunov exponent} of the measure-preserving system $(X,T,\mu)$, the pointwise version of which turns out to be equivalent to Kifer's notion of  maximal characteristic exponent introduced in \cite{Kif}. Unlike the situation in Theorem~\ref{thm:main C1 finite}, $h_\mu(T)>0$ cannot exclude the possibility of $\chi_\mu(T)=0$(a concrete example is provided in \S~\ref{subse:C1 infinite}) in general, and once this happens, \eqref{eq:unstable dim lower} simply means that $\dim_H \big( W_\delta^u(x,T) \big)=\infty$.
  \item If the assumption on $T$ in \eqref{eq:Lip cond} is replaced by $T^{-1}$ , then applying Theorem~\ref{thm:main TDS} to $T^{-1}$ we conclude that
\begin{equation}\label{eq:stable dim lower}
\dim_H \big( W_\delta^s(x,T) \big)\ge\frac{h_\mu(T^{-1})}{\chi_\mu(T^{-1})} = \frac{h_\mu(T)}{\chi_\mu(T^{-1})} \,,\quad \mu\text{-a.e.}~x\in X \,.
\end{equation}
\end{enumerate}
\end{rmk}
Our main result Theorem~\ref{thm:main C1 finite} follows from Theorem~\ref{thm:main TDS} immediately: for  any $C^1$ diffeomorphism $f$ on a Riemannian manifold $M$, $f^n$ is automatically a Lipschitz map for each $n\ge 1$ and it is evident that as a continuous function, the pointwise Lipschitz constant $\mcll_n(x)$ coincides with $\|\Dif f^n(x)\|$, and hence \eqref{eq:maximal Lyapunov exp} coincides with \eqref{eq:main C1 finite} for $f=T$. Therefore, in the rest of this paper we only need to prove Theorem~\ref{thm:main TDS}.

The key ingredient in our approach to Theorem~\ref{thm:main TDS} is to construct  a measurable partition subordinate to local stable/unstable sets and to establish local entropy formula for disintegration over such a partition. This intermediate step can be stated as the following theorem, where the notions on measurable partition and disintegration will be specified in \S~\ref{subse:partition} and the notations appearing in \eqref{eq:disintegrated local entropy} will be explained in Definition~\ref{def:FH}.

\begin{thm}\label{thm:local} Let $(X,T)$ be a TDS and let $\mu$ be an ergodic  $T$-invariant Borel probability  with $h_\mu(T)>0$. Then for any $\delta>0$, there exists a measurable partition $\xi$ of $X$ with the following properties.
\begin{enumerate}
\item $\overline{\xi(x)}\subset W_\delta^u(x,T)$ for  each $x\in X$, where $\xi(x)$ is the atom of $\xi$ containing $x$.

\item Let $\mu=\int_X\mu_x \dif \mu(x)$
be the measure disintegration of $\mu$ over $\xi$. Then for $\mu$-a.e. $x\in X$,
\begin{equation}\label{eq:disintegrated local entropy}
  \underline{h}_{\mu_x}(T,y)=\overline{h}_{\mu_x}(T,y)=h_\mu(T),\quad \mu_x\text{-a.e.}~y\in X\,.
\end{equation}
\end{enumerate}
\end{thm}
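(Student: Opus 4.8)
The plan is to realise $\xi$ as a multi-scale ``past'' partition. Choose finite Borel partitions $\mclp_1\preceq\mclp_2\preceq\cdots$ with $\mu(\partial\mclp_k)=0$, $\diam\mclp_1<\delta$ and $\diam\mclp_k\to0$, together with integers $0=N_1<N_2<\cdots$, and set
\[
  \xi:=\bigvee_{k\ge1}\ \bigvee_{n\ge N_k}T^{n}\mclp_k .
\]
Then $y\in\xi(x)$ forces $d(T^{-n}x,T^{-n}y)\le\diam\mclp_k$ whenever $n\ge N_k$. The block $k=1$ gives $d(T^{-n}x,T^{-n}y)\le\diam\mclp_1<\delta$ for all $n\ge0$, while for any $\varepsilon>0$ the block with $\diam\mclp_k<\varepsilon$ gives $d(T^{-n}x,T^{-n}y)<\varepsilon$ for $n\ge N_k$, so $d(T^{-n}x,T^{-n}y)\to0$. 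Since $T$ is a homeomorphism these (closed) inequalities survive passage to limits, whence $\overline{\xi(x)}\subset W_\delta^u(x,T)$ for every $x$, which is (1). By construction $\hxi$ is generated by the past iterates $T^{n}\mclp_k$ ($n\ge N_k\ge0$), so $T^{-1}\xi\ge\xi$; and, crucially, its remote-past tail $\Pi':=\bigcap_{j\ge0}T^{j}\hxi$ is a $T$-invariant algebra contained in the Pinsker $\sigma$-algebra of $(X,T,\mu)$, a fact I will exploit below.

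For (2), let $\mu=\int\mu_x\,\dif\mu(x)$ be the disintegration over $\xi$, and recall that $\underline h_{\mu_x}(T,y)$ and $\overline h_{\mu_x}(T,y)$ are the Brin--Katok lower/upper local entropies of the conditional measure $\mu_x$, computed through the forward Bowen balls $B_n(y,\varepsilon)=\{z:d(T^iy,T^iz)<\varepsilon,\ 0\le i<n\}$. I would bound $\mu_x(B_n(y,\varepsilon))$ from both sides by atoms of forward refinements of an auxiliary finite partition with null boundary: taking $\mclq$ with $\diam\mclq<\varepsilon$ yields $\bigvee_{i=0}^{n-1}T^{-i}\mclq(y)\subset B_n(y,\varepsilon)$, and (as in Katok's argument, using $\mu(\partial\mclq')=0$ and the ergodic theorem to discard the subexponentially many orbit-times spent near $\partial\mclq'$) a slightly coarser $\mclq'$ yields the reverse containment $B_n(y,\varepsilon)\subset\bigvee_{i=0}^{n-1}T^{-i}\mclq'(y)$ up to a subexponential error. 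Thus everything reduces to the exponential rate of $\mu_x\big(\bigvee_{i=0}^{n-1}T^{-i}\mclq(y)\big)$.

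The heart of the matter is a relative Shannon--McMillan--Breiman theorem for conditioning on $\hxi$: for $\mu$-a.e. $x$ and $\mu_x$-a.e. $y$,
\[
  -\frac1n\log\mu_x\Big(\bigvee_{i=0}^{n-1}T^{-i}\mclq(y)\Big)\To h_\mu(T,\mclq)\qquad(n\to\infty).
\]
Writing the conditional information as a sum of increments and transporting each by the invariance of $\mu$, the $j$-th increment equals $H_\mu(\mclq\mid\bigvee_{1\le l\le j}T^{l}\mclq\vee T^{j}\hxi)$, which is squeezed between $H_\mu(\mclq\mid\bigvee_{1\le l\le j}T^{l}\mclq)\downarrow h_\mu(T,\mclq)$ and $H_\mu(\mclq\mid\bigvee_{l\ge1}T^{l}\mclq\vee T^{j}\hxi)\uparrow H_\mu(\mclq\mid\bigvee_{l\ge1}T^{l}\mclq\vee\Pi')$. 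Because $\Pi'$ lies in the Pinsker algebra and conditioning on a $T$-invariant sub-algebra of the Pinsker algebra leaves the entropy rate of any partition unchanged, the latter limit is again $h_\mu(T,\mclq)$; hence the two bounds agree and the Ces\`aro rate is $h_\mu(T,\mclq)$, the pointwise statement following by the martingale/Breiman upgrade. Feeding this into the sandwich above and letting $\varepsilon\to0$, so that $\mclq,\mclq'$ may be refined and $h_\mu(T,\mclq)\to h_\mu(T)$ by Kolmogorov--Sinai, yields $\underline h_{\mu_x}(T,y)=\overline h_{\mu_x}(T,y)=h_\mu(T)$.

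The main obstacle is precisely this relative SMB step. Conditioning is on the non-invariant, multi-scale algebra $\hxi$, so the a.e. (not merely $L^1$) convergence requires the Chung--Neveu control of the conditional information functions; and one must verify that the spatially fine past information carried by $\hxi$ is genuinely lost in the rate, which is exactly the point where the identification of the tail $\Pi'=\bigcap_{j\ge0}T^{j}\hxi$ as a sub-algebra of the Pinsker algebra (via Rokhlin--Sinai) and the Pinsker-relative entropy identity are indispensable. A secondary, more routine difficulty is the lower Bowen-ball bound, which must be carried out for each conditional measure $\mu_x$ rather than for $\mu$; the exceptional null sets are then assembled by a Fubini argument over the disintegration, so the boundary estimates must be kept uniform on a set of $x$ of full $\mu$-measure.
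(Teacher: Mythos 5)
Your construction of $\xi$ has the same shape as the paper's (the paper takes $\xi=\mclp^-$ with $\mclp=\bigvee_p T^{k_p}\beta_p$, which is exactly a join $\bigvee_k\bigvee_{n\ge N_k}T^n\mclp_k$), and your argument for assertion (1) is fine. The gap is in assertion (2), and it is not a technicality: you never specify how the integers $N_k$ are to be chosen, and for a bad choice the statement you want is simply false. Take $(X,T)$ the full $2$-shift with the $(\tfrac12,\tfrac12)$-Bernoulli measure, let $\mclp_k$ be the (clopen, hence null-boundary) cylinder partition on coordinates $[-m_k,m_k]$, and let $N_k$ grow slower than $m_k$ (say $N_k=k-1$, $m_k=m_1+k^2$); all of your stated requirements hold. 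Then $y\in\xi(x)$ forces $y_j=x_j$ for every $j\le\sup_k(m_k-N_k)=+\infty$, i.e. $\xi(x)=\{x\}$, so $\mu_x=\delta_x$ and both local entropies vanish while $h_\mu(T)=\log 2$. In the same example your ``crucial'' claim that $\Pi'=\bigcap_{j\ge0}T^j\hxi$ lies in the Pinsker algebra fails badly: $\Pi'$ is the full Borel $\sigma$-algebra while the Pinsker algebra of a Bernoulli shift is trivial. There is no Rokhlin--Sinai-type theorem for the tail of a join of infinitely many partitions' pasts; the classical tail-in-Pinsker statement concerns a single finite partition, and whether $\Pi'\subset\Pi(T)$ holds here depends precisely on how fast the $N_k$ grow relative to the fineness of the $\mclp_k$ --- which is the very point your write-up leaves open.

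Even granting a good choice of $N_k$, your relative SMB squeeze contains an invalid exchange of limits. Since the algebras $\mclq^-\vee T^j\hxi$ decrease in $j$, the decreasing martingale theorem gives $H_\mu\big(\mclq\,\big|\,\mclq^-\vee T^j\hxi\big)\uparrow H_\mu\big(\mclq\,\big|\,\bigcap_j(\mclq^-\vee T^j\hxi)\big)$, \emph{not} convergence to $H_\mu(\mclq\mid\mclq^-\vee\Pi')$: intersection does not commute with join of $\sigma$-algebras, the inclusion $\bigcap_j(\mclq^-\vee T^j\hxi)\supset\mclq^-\vee\Pi'$ can be strict, and a strictly larger conditioning algebra yields a strictly smaller lower bound, so your sandwich need not close at $h_\mu(T,\mclq)$. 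A further problem is the pointwise (Breiman) upgrade: it needs the increment algebras $\mclq_{-j}^{-1}\vee T^j\hxi$ to be monotone, which holds exactly when $\mclq^-\prec\xi$ --- true for the paper's partitions $\alpha_p$ by construction, false for an arbitrary small-diameter $\mclq$. The paper resolves all of this quantitatively rather than structurally: the shifts $k_q$ are chosen inductively via Lemma~\ref{lem:entropy asymp} so that \eqref{eq:alpha_q} holds, which forces $h_\mu(T,\alpha_p)-\tfrac1p\le c_p=H_\mu\big(\alpha_p\,\big|\,\bigcap_n\widehat{\gamma_{p,n}}\big)\le h_\mu(T,\alpha_p)$; that is, it bounds the conditional entropy with respect to the correct intersection-of-joins algebra directly, with no appeal to the Pinsker algebra, and then Lemma~\ref{lem:shift} transfers the SMB rate from the $\alpha_p$-names to the shifted names $T^{-(k_p+i)}\alpha_p$ whose atoms fit inside Bowen balls. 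This calibration is the actual content of the proof and is what your proposal is missing.
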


\begin{rmk}
  Let us say that the measurable partition appearing in the theorem above is {\it subordinate to} local unstable sets of $(X,T)$.  Applying the theorem to $T^{-1}$ instead of $T$, we can obtain another measurable partition subordinate to local stable sets of $(X,T)$ with analogous properties.
\end{rmk}

Let us give a partial list for literatures closely related to the main theme of this paper. For results relating dimension with entropy and Lyapunov exponents under $C^{1+\alpha}$ settings, see \cite{Yo,LY1,LY2,BPS}. For relevant results under $C^1$ or even TDS settings, see \cite{BHS,Su,Hu,FHYZ,HLY,HXY,WC,Y,HHW,FGH}.

The paper is organized as follows. \S~\ref{se:pre} is a preliminary section in which  we review some notions of ergodic theory. In \S~\ref{se:local} we construct a measurable partition subordinate to local unstable sets on whose fibres the local entropy concentrates, and prove Theorem~\ref{thm:local}. Based on this construction, in \S~\ref{se:dim} we complete the proof of Theorem~\ref{thm:main TDS}, and finally apply it to infinite dimensional $C^1$ dynamical systems.

\section{Preliminaries}\label{se:pre}

In this section, we review some notions in ergodic theory that will be used in subsequent sections. We shall restrict ourselves to talking about compact metric space $(X,d)$ and TDS $(X,T)$. For a TDS  $(X,T)$, let $\mscb_X$ denote its Borel $\sigma$-algebra and let
\[\msce_T:=\{E\in \mscb_X: T^{-1}E = E \} \,,\]
which is a sub-$\sigma$-algebra of $\mscb_X$ and a $\mscb_X$-measurable function $f$ on $X$ is $T$-invariant iff it is $\msce_T$-measurable. Recall that by our assumption, $T$ is a homeomorphism, so $\msce_T=\msce_{T^{-1}}$.

Let $\mclm(X)$, $\mclm(X,T)$ and  $\mclm^e(X,T)$ denote the collection of Borel measurable probability measures, $T$-invariant ones and $T$-ergodic ones respectively.  Given $\mu\in \mclm(X)$, let $\mscb_\mu$ be the completion of $\mscb_X$ with respect to $\mu$, so that $(X,\mscb_\mu)$ is a {\it Lebesgue space}(see, for example, \cite{Co,EW,Gl,Pa69,PrU,Ro}). Denote the collection of $\mu$-integrable functions on $X$ by $L^1(\mu)$. For $g\in L^1(\mu)$ and for any sub-$\sigma$-algebra $\mscc$ of $\mscb_\mu$, let $\mbbe_\mu(g|\mscc)$ be (a representative of) the conditional expectation of $g$ with respect to $\mu$ and $\mscc$.

\subsection{Ergodic theorems}\label{subse:ergodic thm}
We shall make use of two well known generalizations of Birkhoff's ergodic theorem listed below. The first one is a lemma named after Breiman as follows. See, for example \cite[Lemma 14.34]{Gl}), for a proof.

\begin{lem}[Breiman's Lemma]\label{lem:Breiman}
 Let $(X,T)$ be a TDS and let $\mu\in\mclm(X,T)$. Let $(g_n)$ be a sequence of measurable functions on $(X,\mscb_\mu)$  with the following properties:
 \begin{itemize}
   \item $\sup_n|g_n|\in L^1(\mu)$;
   \item there exists a measurable $g$ such that $\lim\limits_{n\to +\infty} g_n=g$ $\mu$-a.e.(and therefore in $L^1(\mu)$).
 \end{itemize}
Then we have:
\[ \lim_{N\to +\infty} \frac{1}{N}\sum_{n=0}^{N-1}g_n\circ T^n = \mbbe_\mu(g | \msce_T ) \quad \mu\text{-a.e. \ \ and \ \  in } L^1(\mu). \]
\end{lem}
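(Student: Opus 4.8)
The plan is to reduce the statement to the classical Birkhoff ergodic theorem by viewing the $n$-dependence of $g_n$ as a perturbation of the fixed limit $g$. First I would record that $g\in L^1(\mu)$, since $|g|\le G$ $\mu$-a.e.\ where $G:=\sup_n|g_n|\in L^1(\mu)$. Then I would split
\[
\frac{1}{N}\sum_{n=0}^{N-1}g_n\circ T^n = \frac{1}{N}\sum_{n=0}^{N-1}g\circ T^n + \frac{1}{N}\sum_{n=0}^{N-1}(g_n-g)\circ T^n \,.
\]
Birkhoff's theorem applied to $g$ shows the first average converges $\mu$-a.e.\ and in $L^1(\mu)$ to $\mbbe_\mu(g|\msce_T)$. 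Hence the whole problem reduces to proving that the error term $\frac{1}{N}\sum_{n=0}^{N-1}(g_n-g)\circ T^n$ tends to $0$ $\mu$-a.e.\ and in $L^1(\mu)$.

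To control the error I would introduce the monotone tail envelope $h_k:=\sup_{n\ge k}|g_n-g|$ for $k\ge 0$. This is a decreasing sequence of measurable functions with $0\le h_k\le 2G\in L^1(\mu)$, and $h_k\downarrow 0$ $\mu$-a.e.\ by the hypothesis $g_n\to g$ $\mu$-a.e. For fixed $k$ and $N>k$ I would bound the error by a \emph{head} plus a \emph{tail},
\[
\left|\frac{1}{N}\sum_{n=0}^{N-1}(g_n-g)\circ T^n\right|\le \frac{1}{N}\sum_{n=0}^{k-1}|g_n-g|\circ T^n + \frac{1}{N}\sum_{n=k}^{N-1}h_k\circ T^n \,.
\]
The head is a fixed, $\mu$-a.e.\ finite quantity divided by $N$, hence vanishes as $N\to+\infty$ for each fixed $k$; the tail is dominated by $\frac{1}{N}\sum_{n=0}^{N-1}h_k\circ T^n$, which converges $\mu$-a.e.\ to $\mbbe_\mu(h_k|\msce_T)$ by Birkhoff's theorem. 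Taking $\limsup_{N\to+\infty}$ therefore gives
\[
\limsup_{N\to+\infty}\left|\frac{1}{N}\sum_{n=0}^{N-1}(g_n-g)\circ T^n\right|\le \mbbe_\mu(h_k|\msce_T)\quad\mu\text{-a.e.}
\]
for every $k$. Finally, letting $k\to+\infty$ and invoking conditional dominated convergence (legitimate since $h_k\downarrow 0$ with $h_k\le 2G\in L^1(\mu)$) yields $\mbbe_\mu(h_k|\msce_T)\to 0$ $\mu$-a.e., so the left-hand side is $0$ $\mu$-a.e. This settles the pointwise convergence.

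For the $L^1(\mu)$ convergence I would argue by uniform integrability rather than redo the estimates. The averages satisfy $\bigl|\frac{1}{N}\sum_{n=0}^{N-1}g_n\circ T^n\bigr|\le \frac{1}{N}\sum_{n=0}^{N-1}G\circ T^n$, and the dominating sequence on the right converges in $L^1(\mu)$ by Birkhoff applied to $G\in L^1(\mu)$, hence is uniformly integrable; therefore so is the original sequence. Uniform integrability together with the $\mu$-a.e.\ convergence already proved yields $L^1(\mu)$ convergence by Vitali's convergence theorem (equivalently, a generalized dominated convergence argument with the $L^1$-convergent dominating sequence $\frac{1}{N}\sum_{n=0}^{N-1}G\circ T^n$).

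The main obstacle I anticipate is the interchange of the two limits in $N$ and $k$: the point of the envelope $h_k$ is to decouple the $n$-dependence of $g_n$ from the ergodic averaging, so that Birkhoff's theorem can be applied to a single integrable function at each stage and the remainder is absorbed uniformly. The integrability hypothesis $\sup_n|g_n|\in L^1(\mu)$ enters precisely at the final conditional-dominated-convergence step sending $\mbbe_\mu(h_k|\msce_T)\to 0$, and again in the uniform-integrability argument for the $L^1$ statement; without it neither the head--tail split nor the passage $k\to+\infty$ can be justified.
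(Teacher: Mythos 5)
Your proposal is correct, and the paper gives no proof of this lemma itself---it cites \cite[Lemma~14.34]{Gl}---so the only comparison is with the standard argument, which is exactly what you wrote: the head--tail split with the envelope $h_k=\sup_{n\ge k}|g_n-g|$, Birkhoff applied to $g$, $h_k$ and $G=\sup_n|g_n|$, and conditional dominated convergence to kill $\mbbe_\mu(h_k|\msce_T)$ (this is the classical proof going back to Maker's ergodic theorem). Your $L^1$ step via the $L^1$-convergent dominating sequence $\frac{1}{N}\sum_{n=0}^{N-1}G\circ T^n$ and Vitali (or Pratt's generalized dominated convergence) is likewise the standard route, so nothing needs to be added.
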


Given a TDS $(X,T)$, a sequence of functions $\phi_n: X\to [-\infty,+\infty)$ is called {\it subadditive} w.r.t. $T$, if
\[\phi_{m+n}\le \phi_m+\phi_n\circ T^m, \quad \forall~m,n\ge 1.\]
The following subadditive ergodic theorem is first proved by Kingman in \cite{Kin}.
\begin{thm}[Kingman's subadditive ergodic theorem]\label{thm:Kingman}
Let $(X,T)$ be a TDS and let $\mu\in\mclm(X,T)$. Let $\phi_n$ be measurable and subadditive w.r.t. $T$ and suppose $\phi_1^+\in L^1(\mu)$. Then $\frac{1}{n}\phi_n$ converges  $\mu$-a.e. to some $T$-invariant measurable function $\psi: X\to [-\infty,+\infty)$, $\psi^+\in L^1(\mu)$ and
\[\int \psi\dif\mu =\lim_{n\to\infty}\frac{1}{n}\int\phi_n\dif\mu=\inf_{n\ge 1} \frac{1}{n}\int\phi_n\dif\mu\,.\]
Moreover, if $\phi_n\ge 0$, then the convergence is also in $L^1(\mu)$.
\end{thm}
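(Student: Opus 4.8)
The plan is to produce the limit function $\psi$ and pin it down by squeezing its integral between two copies of
$\gamma:=\lim_{n}\tfrac1n\int_X\phi_n\,\dif\mu$. First I would record that the numerical sequence $a_n:=\int_X\phi_n\,\dif\mu\in[-\infty,+\infty)$ is subadditive: the invariance of $\mu$ turns $\phi_{m+n}\le\phi_m+\phi_n\circ T^m$ into $a_{m+n}\le a_m+a_n$, and $a_n^+<\infty$ because $\phi_n\le\sum_{j<n}\phi_1\circ T^j$ with $\phi_1^+\in L^1(\mu)$. Fekete's lemma then gives $\gamma=\lim_n a_n/n=\inf_n a_n/n\in[-\infty,+\infty)$. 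Next I set $u_n:=\tfrac1n\phi_n$, $\underline\psi:=\liminf_n u_n$ and $\bar\psi:=\limsup_n u_n$. Feeding $\phi_n\le\phi_1+\phi_{n-1}\circ T$ into $u_n$ and letting $n\to\infty$ gives $\bar\psi\le\bar\psi\circ T$ and $\underline\psi\le\underline\psi\circ T$ $\mu$-a.e.; composing with the bounded increasing map $\arctan$ and using invariance of $\mu$ forces both functions to be $T$-invariant. The goal then reduces to proving $\int_X\bar\psi\,\dif\mu\le\gamma\le\int_X\underline\psi\,\dif\mu$: since $\underline\psi\le\bar\psi$ pointwise this collapses to $\int\underline\psi=\int\bar\psi=\gamma$, so $\bar\psi=\underline\psi=:\psi$ $\mu$-a.e. with $\int\psi\,\dif\mu=\gamma$, which is precisely the asserted a.e.\ convergence and integral formula.

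For the upper inequality $\int\bar\psi\,\dif\mu\le\gamma$ I would iterate subadditivity along blocks of a fixed length $k$, obtaining $\phi_{mk}\le\sum_{j=0}^{m-1}\phi_k\circ T^{jk}$, and replace $\phi_k$ by its truncation $\phi_k^{(C)}:=\max\{\phi_k,-C\}\in L^1(\mu)$. Birkhoff's theorem for the measure-preserving map $T^k$ yields $\tfrac1m\sum_{j<m}\phi_k^{(C)}\circ T^{jk}\to\mbbe_\mu(\phi_k^{(C)}\mid\msce_{T^k})$ $\mu$-a.e., while writing $N=mk+r$ with $0\le r<k$, applying $\phi_N\le\phi_{mk}+\phi_r\circ T^{mk}$, and using the standard fact that $\tfrac1n g\circ T^{n}\to0$ $\mu$-a.e.\ for $g=\phi_1^+\in L^1(\mu)$, kills the remainder. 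This gives $\bar\psi\le\tfrac1k\mbbe_\mu(\phi_k^{(C)}\mid\msce_{T^k})$ $\mu$-a.e.; integrating, letting $C\to\infty$ by monotone convergence, and taking $\inf_k$ produces $\int\bar\psi\,\dif\mu\le\inf_k\tfrac1k a_k=\gamma$.

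The reverse inequality $\int\underline\psi\,\dif\mu\ge\gamma$ is where the genuine work lies, and I expect it to be the main obstacle, since subadditivity bounds $\phi$ on a union of blocks only from above and so offers no direct lower estimate on $\phi_N$. My plan is a stopping-time/tiling argument: fix $\epsilon>0$ and set $\tau(x):=\min\{n\ge1:\phi_n(x)\le n(\underline\psi(x)+\epsilon)\}$, finite $\mu$-a.e.\ by definition of the liminf; truncate $\tau$ at a level $L$ chosen (via Birkhoff) so that the orbit spends a small fraction of time in $\{\tau>L\}$, greedily tile $\{0,\dots,N-1\}$ into blocks on which $\phi$ is controlled by $\underline\psi+\epsilon$, and estimate the few leftover indices through $\phi_1^+$. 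Taking expectations and letting $N\to\infty$ yields $\gamma\le\int(\underline\psi+\epsilon)\,\dif\mu+(\text{error}\to0)$, and $\epsilon\downarrow0$ gives the claim. Equivalently one may first pass to the ergodic components $\mu_\omega$, where the invariant $\underline\psi$ is a.e.\ constant, run the tiling on each component, and reassemble via $\gamma=\int\gamma_\omega\,\dif\mathbb P(\omega)$. The delicate points are the uniform control of the boundary/leftover blocks and the justification of limit exchanges in the degenerate cases $\gamma=-\infty$ or $\phi_n$ unbounded below.

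Combining the two bounds with $\underline\psi\le\bar\psi$ gives $\gamma\le\int\underline\psi\,\dif\mu\le\int\bar\psi\,\dif\mu\le\gamma$, so all three are equal and $\int(\bar\psi-\underline\psi)\,\dif\mu=0$ with a nonnegative integrand forces $\underline\psi=\bar\psi=:\psi$ $\mu$-a.e.; thus $\tfrac1n\phi_n\to\psi$ $\mu$-a.e., $\psi$ is $T$-invariant, $\int\psi\,\dif\mu=\gamma$, and $\psi^+\in L^1(\mu)$ because $\psi\le\tfrac1k\mbbe_\mu(\phi_k^{(C)}\mid\msce_{T^k})\in L^1(\mu)$. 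Finally, when $\phi_n\ge0$ the functions $u_n$ are nonnegative with $u_n\to\psi$ $\mu$-a.e.\ and $\int u_n\,\dif\mu\to\int\psi\,\dif\mu=\gamma<\infty$, so Scheff\'e's lemma upgrades the convergence to $L^1(\mu)$.
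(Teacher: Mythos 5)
The paper does not actually prove this statement: Theorem~\ref{thm:Kingman} is quoted as a classical result with a citation to Kingman's original paper \cite{Kin}, so there is no internal proof to compare against. Your sketch is, in substance, the standard modern proof (Steele's stopping-time argument): Fekete for $\gamma=\inf_n\frac1n\int\phi_n\,\dif\mu$, invariance of $\liminf$/$\limsup$ via the $\arctan$ trick, the block decomposition plus Birkhoff for the upper bound $\int\overline\psi\,\dif\mu\le\gamma$, the greedy tiling along the stopping time $\tau$ for the lower bound, and Scheff\'e for the $L^1$ statement in the nonnegative case. All of these steps are correct as outlined, and together they do prove the theorem in the case $\gamma>-\infty$ with $\underline\psi>-\infty$ a.e.

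The one genuine soft spot is exactly the degenerate case you flag but do not resolve, and it bites in two concrete places. First, if $\underline\psi(x)=-\infty$ on a set of positive measure, your stopping time $\tau(x)=\min\{n\ge1:\phi_n(x)\le n(\underline\psi(x)+\epsilon)\}$ need not be finite there (the right-hand side is $-\infty$), so the tiling cannot even start; one must run the argument with the truncation $\max\{\underline\psi,-M\}$ in place of $\underline\psi$. Second, your final squeeze ``$\int(\overline\psi-\underline\psi)\,\dif\mu=0$ with nonnegative integrand forces equality a.e.'' is an $\infty-\infty$ fallacy when $\gamma=-\infty$: then both integrals are $-\infty$ and the difference of integrals is meaningless, while the lower bound $\gamma\le\int(\underline\psi+\epsilon)\,\dif\mu$ becomes vacuous. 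The standard repair, which should be stated rather than waved at, is to apply the finite case to the truncated processes $\phi_n^{(M)}:=\max\{\phi_n,-Mn\}$, which are again subadditive because $\max\{a+b,-(s+t)\}\le\max\{a,-s\}+\max\{b,-t\}$ for $s,t\ge0$, and then let $M\to\infty$, checking that the a.e.\ limits $\psi^{(M)}$ decrease to a limit $\psi$ which is also the a.e.\ limit of $\frac1n\phi_n$ and satisfies $\int\psi\,\dif\mu=\lim_M\gamma_M=\gamma$ by monotone convergence. With that reduction inserted, your proposal is a complete and correct proof; without it, the argument as written establishes the theorem only under the implicit extra hypothesis $\gamma>-\infty$.
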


\subsection{Measurable partition and disintegration}\label{subse:partition} For notions discussed in this subsection, please refer to, for example, \cite{Co,EW,Gl,Pa69,PrU,Ro}.
Let $X$ be a compact metric space. By definition, a (Borel) {\it partition} of $X$ is a collection of Borel sets in $X$ such that $X$ can be written as a disjoint union of them.  All the partitions appearing in this paper consist of Borel sets, so we omit ``Borel" and simply call them partitions. Elements in
a partition are called its {\it atoms}. Given a partition $\alpha$ of $X$ and $x\in X$, the unique atom in $\alpha$ that contains $x$ is denoted by $\alpha(x)$.
For two partitions $\alpha$ and $\beta$, we say that $\beta$ is {\it finer} than $\alpha$ or $\alpha$ is {\it coarser} than $\beta$, denoted by $\alpha\prec \beta$ or  $\beta\succ\alpha$, if each element in $\alpha$ is a union of a collection of elements in $\beta$. This defines a partial order on partitions. For a finite or countable family of partitions $(\alpha_i)_{i\in I}$, denote their common {\it refinement}
\[ \bigvee_{i\in I} \alpha_i :=\Big\{\bigcap_{i\in I} A_i : A_i\in\alpha_i,~\forall~i\in I \Big\} \,.\]
In other words, $\bigvee_{i\in I} \alpha_i$ is the coarsest partition that is finer than every $\alpha_i$. The meaning of $\alpha\vee\beta$ and $\bigvee_{i=1}^n \alpha_i$ are similar.

A partition $\xi$ of $X$ is called {\it measurable}, if it is countably generated in the following sense: there exist a sequence of finite partitions $(\alpha_n)_{n\ge 1}$ such that $\xi=\bigvee_{n=1}^\infty \alpha_n$. Given $\mu\in\mclm(X)$, for any measurable partition $\xi$, there exists an associated smallest $\mu$-complete sub-$\sigma$-algebra of $\mscb_\mu$, denoted by $\hxi$, which contains all the measurable sets that can be written as union of atoms in $\xi$. In other words,
\[\hxi=\{ A \in \mscb_\mu : A=\cup_{x\in A} \xi(x)  \}_\mu \,, \]
where $\mscc_\mu$ denotes the completion of $\mscc$ with respect to $\mu$ for any sub-$\sigma$-algebra $\mscc$ of $\mscb_\mu$. It is well known that for any sub-$\sigma$-algebra $\mscc$ of $\mscb_\mu$, there exists a measurable partition $\xi$ of $X$ such that $\mscc_\mu=\hxi$. For a measurable partition $\xi$, the conditional expectation $\mbbe_\mu(\cdot|\,\hxi\,)$ is also denoted by $\mbbe_\mu(\cdot| \xi)$. Properties of the {\it disintegration} of $\mu$ over $\xi$ that will be used in this paper are summarized in the proposition below(see, for example, \cite[Theorem~5.14]{EW} for a proof).

\begin{prop}\label{prop:disintegration}
Let $\xi$ be a measurable partition of $(X,d)$ and let $\mu\in \mclm(X)$. Then there exists a Borel set $X'$ of full measure and a family
$\{\mu_x\in \mclm(X): x\in X'\}$ which satisfy the following properties:
\begin{itemize}
  \item [(1)] For every $f \in L^1(X,\mscb_X,\mu)$, we have:
  \begin{equation}\label{eq:dis}
   \mbbe_\mu(f|\,\xi)(x):= \mbbe_\mu(f|\,\hxi\,)(x)=\int_X f\dif \mu_x \,, \quad \mu\text{-a.e.} ~x\in X \,.
  \end{equation}
  \item [(2)] For every $x\in X'$, $\mu_x(X'\cap \xi(x))=1$, and for $x,y \in  X'$,
  $\xi(x)=\xi(y)$ implies that $\mu_x=\mu_y$.
  \end{itemize}

\end{prop}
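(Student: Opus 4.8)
The plan is to realize the family $\{\mu_x\}$ as a \emph{regular conditional probability} for $\mu$ given the sub-$\sigma$-algebra $\hxi$, exploiting two features of the present setting: $X$ is a compact metric space, so $C(X)$ is separable and every element of $\mclm(X)$ is determined by its action on $C(X)$ (Riesz representation); and $\xi$ is measurable, so by definition $\xi=\bigvee_{n\ge 1}\alpha_n$ for finite partitions $\alpha_n$, whence $\hxi$ is generated mod $\mu$ by the \emph{countable} collection $\mathcal{A}$ of all atoms of all the $\alpha_n$. The whole construction will therefore reduce to countably many almost-everywhere identities, each discarding a single $\mu$-null set; removing the countable union of these sets yields the desired full-measure Borel set $X'$.

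First I would fix a countable $\mathbb{Q}$-linear subspace $V\subset C(X)$ that is dense in $C(X)$ and contains the constants, and for each $g\in V$ select a version of $\mbbe_\mu(g\,|\,\hxi)$. The relations characterizing a positive normalized linear functional --- $\mathbb{Q}$-linearity in $g$, positivity ($g\ge 0\Rightarrow \mbbe_\mu(g|\hxi)\ge 0$), normalization ($\mbbe_\mu(1|\hxi)=1$), and the bound $|\mbbe_\mu(g|\hxi)|\le\|g\|_\infty$ --- each hold $\mu$-a.e. and involve only countably many instances, so on a full-measure set $X_0$ they hold simultaneously for every $g\in V$. For $x\in X_0$ the map $g\mapsto\mbbe_\mu(g|\hxi)(x)$ is a bounded positive functional on $V$, which extends uniquely by the $\|\cdot\|_\infty$-bound to a positive normalized functional on $C(X)$; the Riesz representation theorem then furnishes a unique $\mu_x\in\mclm(X)$ with $\int g\,\dif\mu_x=\mbbe_\mu(g|\hxi)(x)$ for all $g\in C(X)$ (I extend $x\mapsto\mu_x$ arbitrarily off $X_0$). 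Measurability of $x\mapsto\mu_x$ into $\mclm(X)$ with its weak-$*$ Borel structure is automatic, since each $x\mapsto\int g\,\dif\mu_x$ is a conditional expectation. Property (1) holds by construction for $g\in C(X)$; to upgrade it to all $f\in L^1(\mu)$ I would run a monotone-class argument, verifying the integrated identity $\int_B\big(\int f\,\dif\mu_x\big)\dif\mu(x)=\int_B f\,\dif\mu$ for every $B\in\hxi$ first on $C(X)$, then on indicators of Borel sets, and finally on $L^1(\mu)$ by truncation and monotone convergence, using that conditional expectation is an $L^1$-contraction.

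It remains to establish property (2), where the countable generation of $\xi$ does the essential work, and which I expect to be the main obstacle: it requires converting almost-everywhere statements into statements holding for \emph{every} point of $X'$ and compatible with the atom structure. For each atom $A\in\mathcal{A}$ the indicator $\mathbf{1}_A$ is $\hxi$-measurable, so $\mbbe_\mu(\mathbf{1}_A|\hxi)=\mathbf{1}_A$ $\mu$-a.e., i.e. $\mu_x(A)=\mathbf{1}_A(x)$ off a null set; intersecting over the countable family $\mathcal{A}$ (and using $\mu_x(X')=1$ a.e., which holds because $\mbbe_\mu(\mathbf{1}_{X'}|\hxi)=1$) yields a full-measure $X'\subset X_0$ on which $\mu_x(\alpha_n(x))=1$ for all $n$, whence $\mu_x(X'\cap\xi(x))=\mu_x\big(X'\cap\bigcap_n\alpha_n(x)\big)=1$. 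For the consistency clause I would use that $x\mapsto\mu_x$ is $\hxi$-measurable by construction: since $\mu_x$ is pinned down by the countably many values $\{\mbbe_\mu(g|\hxi)(x):g\in V\}$ and each chosen version is $\hxi$-measurable, one may replace it off a null set by a genuinely atom-constant ($\xi$-measurable) function, so that after shrinking $X'$ once more, $\xi(x)=\xi(y)$ forces $\mu_x=\mu_y$ for $x,y\in X'$. The delicate point throughout is the bookkeeping: every assertion must be reduced to a countable list of a.e. identities before the null sets are discarded, and it is precisely the separability of $C(X)$ together with the countable generation $\xi=\bigvee_{n\ge 1}\alpha_n$ that guarantees this reduction is possible.
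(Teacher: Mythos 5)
Your proposal is correct and follows essentially the same route as the paper, which gives no proof of its own but cites \cite[Theorem~5.14]{EW}, whose proof is precisely your construction: conditional expectations on a countable dense subspace of $C(X)$, a.e.\ positivity and normalization, the Riesz representation theorem, and the countable generation $\xi=\bigvee_n\alpha_n$ to convert countably many a.e.\ identities into statements about atoms. One bookkeeping point you should make explicit: since the condition $\mu_x(X'\cap\xi(x))=1$ refers to $X'$ itself, a single removal of null sets does not close the loop; the standard fix is to iterate, setting $X_{k+1}=\{x\in X_k:\mu_x(X_k)=1\}$ (each of full measure, with Borel versions of $x\mapsto\mu_x(X_k)$) and $X'=\bigcap_{k\ge 1}X_k$, so that $\mu_x(X')=\lim_{k\to\infty}\mu_x(X_k)=1$ for every $x\in X'$.
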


\subsection{Measure theoretic entropy}\label{subse:entropy}
We summarize some basic concepts and useful properties related to measure-theoretic entropy here. See, for example, \cite{Gl,Pa69,Pa04,PrU,Wa} for reference.

Let $X$ be a compact metric space and let $\mathcal{P}_X$ denote the collection of all its finite (Borel) partitions. Given $\alpha \in \mathcal{P}_X$, $\mu\in {\mathcal M}(X)$ and a sub-$\sigma$-algebra $\mscc\subset \mscb_\mu$, {\it the conditional information function} and {\it the conditional entropy} of $\alpha$ with respect to $\mscc$ are defined by:

$$I_\mu(\alpha|\mscc)(x):=-\sum_{A\in \alpha}1_{A}(x)\log \mathbb{E}_\mu(1_A|\mscc)(x)$$
and
$$H_{\mu}(\alpha|\mscc):=\int_X I_\mu(\alpha|\mscc)\dif\mu=\sum_{A\in \alpha} \int_X
-\mathbb{E}_\mu(1_A|\mscc) \log
\mathbb{E}_\mu(1_A|\mscc) \dif \mu,$$
where, as mentioned before, $\mathbb{E}_\mu(1_A|\mscc)$ is the conditional expectation of $1_A$(the indicator function of $A$) with respect to $\mscc$. One basic fact states that $H_\mu(\alpha| \mscc)$ increases in $\alpha$ and decreases in $\mscc$.

Now let $(X,T)$ be a TDS. Given a partition $\alpha$ of $X$ and integers $m<n$, denote
\[ \alpha_m^n:= \bigvee_{k=m}^n T^{-k}\alpha, \qtext{and} \alpha^+ :=\alpha_1^\infty=\bigvee_{k=1}^{+\infty} T^{-k}\alpha\,,\ \  \alpha^- :=\alpha_{-\infty}^{-1}=\bigvee_{k=1}^{+\infty} T^k\alpha \,. \]
When $\mu\in \mathcal{M}(X,T)$ and $\mscc$ is a $T$-invariant(i.e. $T^{-1}\mscc=\mscc$) sub-$\sigma$-algebra of $\mscb_\mu$ , it is not hard to see that $H_\mu(\alpha_0^{n-1}|\mscc)$ is a non-negative and subadditive sequence for any given $\alpha\in\mathcal{P}_X$, so
\[
h_\mu(T,\alpha|\mscc) := \lim_{n\rightarrow +\infty} \frac{1}{n} H_\mu(\alpha_0^{n-1} |\mscc)=\inf_{n\ge 1}
\frac{1}{n} H_\mu(\alpha_0^{n-1}|\mscc)
\]
is well defined.

For the trivial sub-$\sigma$-algebra $\mscc=\{
\emptyset, X\}$, we denote $H_{\mu}(\alpha|\mscc)$ and $h_\mu(T,\alpha|\mscc)$ by $H_{\mu}(\alpha)$ and $h_\mu(T,\alpha)$ respectively. {\it The
measure-theoretic entropy}  of $\mu$ is defined by
\[
h_\mu(T)=\sup_{\alpha \in \mathcal{P}_X} h_\mu(T,\alpha).
\]
A basic and useful estimate for upper bound of entropy is:
\[h_\mu(T,\alpha|\mscc)\le H_\mu(\alpha|\mscc)\big(\text{or } h_\mu(T,\alpha)\big) \le H_\mu(\alpha)\le \log\#\alpha \,.\]

The following elementary properties of information functions are well known (see, for example
\cite{Gl,Pa04}).

\begin{lem}\label{lem:info}
  Let $(X, T)$ be a TDS and let $\mu\in \mclm(X,T)$. Then we have:
  \begin{enumerate}
    \item (The information cocycle equation) For any $\alpha,\beta\in \mathcal{P}_X$ and any sub-$\sigma$-algebra $\mscc\subset \mscb_\mu$,
\[
I_\mu(\alpha\vee\beta|\mscc)=I_\mu(\alpha|\mscc)+I_\mu(\beta| \alpha\vee\mscc ).
\]

\item For any $\alpha\in \mathcal{P}_X$ and any sub-$\sigma$-algebra $\mscc\subset \mscb_\mu$,
\[
I_\mu(T^{-1}\alpha| T^{-1}\mscc)=I_\mu(\alpha|\mscc)\circ T \,.
\]
  \end{enumerate}
\end{lem}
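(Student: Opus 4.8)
The plan is to reduce both identities to the corresponding pointwise statements about conditional expectations, since by definition $I_\mu(\gamma|\mscc)(x) = -\log\mbbe_\mu(1_{\gamma(x)}|\mscc)(x)$ for any finite partition $\gamma$, where $\gamma(x)$ denotes the atom of $\gamma$ containing $x$. Thus it suffices to establish, $\mu$-a.e., a multiplicative identity for part (1) and an equivariance identity for part (2), and then take logarithms and sum over the relevant atoms. All equalities below are understood $\mu$-a.e., as the information functions are only defined up to $\mu$-null sets.

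For part (1), fix $A\in\alpha$ and $B\in\beta$; the atoms of $\alpha\vee\beta$ are the sets $A\cap B$, so the claim reduces to the multiplicative identity
\[ \mbbe_\mu(1_{A\cap B}|\mscc) = \mbbe_\mu(1_A|\mscc)\cdot\mbbe_\mu(1_B|\alpha\vee\mscc) \qquad \mu\text{-a.e. on } A. \]
The first step is the elementary observation that, because $\alpha$ is finite, any $(\alpha\vee\mscc)$-measurable function $g$ decomposes as $g=\sum_{A'\in\alpha}1_{A'}g_{A'}$ with each $g_{A'}$ being $\mscc$-measurable; in particular, taking $g:=\mbbe_\mu(1_B|\alpha\vee\mscc)$, on the atom $A$ one has $1_A g = 1_A g_A$ with $g_A$ being $\mscc$-measurable. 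The second step pulls $g_A$ out of the conditional expectation:
\[ \mbbe_\mu(1_{A\cap B}|\mscc) = \mbbe_\mu\!\left(1_A\,\mbbe_\mu(1_B|\alpha\vee\mscc)\,\middle|\,\mscc\right) = \mbbe_\mu(1_A g_A|\mscc) = g_A\,\mbbe_\mu(1_A|\mscc), \]
where the first equality uses that $1_A$ is $(\alpha\vee\mscc)$-measurable together with the tower property $\mscc\subset\alpha\vee\mscc$. Restricting to $A$, where $g_A=g$, yields the displayed identity. Taking $-\log$, specializing to the atom $(\alpha\vee\beta)(x)=A\cap B$ containing a given $x$, and identifying the three terms as $I_\mu(\alpha\vee\beta|\mscc)(x)$, $I_\mu(\alpha|\mscc)(x)$, and $I_\mu(\beta|\alpha\vee\mscc)(x)$ gives the cocycle equation.

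For part (2), the key is the equivariance of conditional expectation under the measure-preserving map $T$: for $g\in L^1(\mu)$,
\[ \mbbe_\mu(g\circ T\,|\,T^{-1}\mscc) = \mbbe_\mu(g|\mscc)\circ T \qquad \mu\text{-a.e.} \]
I would prove this by verifying the defining property of conditional expectation: the right-hand side is $T^{-1}\mscc$-measurable, and for every $C\in\mscc$ the integrals of $g\circ T$ and of $\mbbe_\mu(g|\mscc)\circ T$ over $T^{-1}C$ coincide, which follows from the $T$-invariance of $\mu$ (so that $\int f\circ T\,\dif\mu=\int f\,\dif\mu$) together with the defining property of $\mbbe_\mu(g|\mscc)$ over $C$. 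Applying this with $g=1_A$, and using $1_A\circ T=1_{T^{-1}A}$, gives $\mbbe_\mu(1_{T^{-1}A}|T^{-1}\mscc)=\mbbe_\mu(1_A|\mscc)\circ T$. Substituting into the definition of $I_\mu(T^{-1}\alpha|T^{-1}\mscc)$, and using that the atoms of $T^{-1}\alpha$ are exactly the sets $T^{-1}A$, produces $I_\mu(\alpha|\mscc)\circ T$ after evaluating at $Tx$.

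The only genuinely delicate point is the null-set bookkeeping in part (1): on $\{\mbbe_\mu(1_A|\mscc)=0\}$ the set $A$ is met only in a $\mu$-null set, so this does not affect the a.e. identity, and the extraction of the $\mscc$-measurable representative $g_A$ holds only up to $\mu$-null sets. Beyond this, both parts are formal consequences of the tower property and the measure-preservation of $T$, so I expect the proof to be short once the two pointwise identities above are in place.
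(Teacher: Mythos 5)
Your proof is correct: the reduction of (1) to the a.e.\ multiplicative identity $\mbbe_\mu(1_{A\cap B}|\mscc)=\mbbe_\mu(1_A|\mscc)\cdot\mbbe_\mu(1_B|\alpha\vee\mscc)$ on $A$ (via the decomposition of $(\alpha\vee\mscc)$-measurable functions over the atoms of $\alpha$ and the tower property), and of (2) to the equivariance $\mbbe_\mu(g\circ T|T^{-1}\mscc)=\mbbe_\mu(g|\mscc)\circ T$, is exactly the standard argument, and your handling of the null sets where the conditional expectations vanish is the right bookkeeping. The paper itself gives no proof of this lemma, deferring to \cite{Gl,Pa04}, and your argument is essentially the one found in those references.
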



Let $\mscf_n$ $(n\ge 1)$ and  $\mscf$ be sub-$\sigma$-algebras of $\mscb_\mu$.  Denote
\begin{itemize}
  \item $\mscf_n\nearrow \mscf$, if $\mscf_n$ is increasing in $n$ and $\mscf$ is the smallest $\sigma$-algebra containing each $\mscf_n$;
  \item $\mscf_n\searrow \mscf$, if $\mscf_n$ is decreasing in $n$ and $\mscf=\bigcap_{n=1}^\infty \mscf_n$.
\end{itemize}
The following martingale-type properties of information functions are also well known(\cite{Gl,Pa04}).

\begin{prop}\label{prop:martingale} Let $(X, T)$ be a TDS and let $\mu\in\mclm(X,T)$. Let $\mscf$, $\mscf_n$ $(n\ge 1)$ be sub-$\sigma$-algebras of $\mscb_\mu$ such that either $\mscf_n\nearrow \mscf$ or   $\mscf_n\searrow \mscf$ holds. Then for any $\alpha\in \mathcal{P}_X$, we have:
\begin{enumerate}
\item Chung's Lemma:
    \[
    f:=\sup_{n\ge 1} I_\mu(\alpha|\mscf_n) \in L^1(\mu) \textand \int_X f \dif \mu\le H_\mu(\alpha)+1 \,.
    \]

\item Martingale Theorem:
 \[
 \lim_{n\to\infty} I_\mu(\alpha|\mscf_n) = I_\mu(\alpha|\mscf) \quad \mu\text{-a.e. \ \ and \ \ in } L^1(\mu) \,.
 \]
\end{enumerate}
\end{prop}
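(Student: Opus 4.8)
The plan is to reduce everything to the behaviour of the martingale $\phi_n^A := \mbbe_\mu(1_A\,|\,\mscf_n)$ attached to each atom $A\in\alpha$, and then to prove, as the central lemma, a maximal inequality controlling how small these conditional probabilities can get. Since $\alpha$ is a finite partition with pairwise disjoint atoms, on each $A$ one has $I_\mu(\alpha|\mscf_n)=-\log\phi_n^A$, so
\[ f=\sup_{n\ge 1} I_\mu(\alpha|\mscf_n)=\sum_{A\in\alpha} 1_A\cdot\sup_{n\ge 1}\bigl(-\log\phi_n^A\bigr), \]
and the pointwise limit in the Martingale Theorem can likewise be analysed atom by atom. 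Everything therefore comes down to understanding $\sup_n(-\log\phi_n^A)=-\log\inf_n\phi_n^A$ on $A$.

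The key step, and the main obstacle, is the maximal inequality
\[ \mu\bigl(A\cap\{\,\inf_{n\ge 1}\phi_n^A<t\,\}\bigr)\le t,\qquad \forall\,t>0. \]
I would first prove this for a finite increasing filtration $\mscf_1\subset\cdots\subset\mscf_N$ by a stopping-time argument: setting $\tau=\min\{n\le N:\phi_n^A<t\}$, each event $\{\tau=n\}$ lies in $\mscf_n$, and since $\phi_n^A=\mbbe_\mu(1_A|\mscf_n)$,
\[ \int_{\{\tau=n\}} 1_A\,\dif\mu=\int_{\{\tau=n\}}\phi_n^A\,\dif\mu\le t\,\mu(\{\tau=n\}); \]
summing over $n$ and using that the $\{\tau=n\}$ partition $\{\min_{n\le N}\phi_n^A<t\}$ gives the bound $t\,\mu(\{\min_{n\le N}\phi_n^A<t\})\le t$. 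For $\mscf_n\nearrow\mscf$ I then let $N\to\infty$ directly; for $\mscf_n\searrow\mscf$ I apply the finite-filtration inequality to the reversed (hence increasing) chain $\mscf_N\subset\mscf_{N-1}\subset\cdots\subset\mscf_1$ before letting $N\to\infty$. Thus the inequality holds identically in both cases, which is exactly why the statement treats $\nearrow$ and $\searrow$ uniformly. Granting it, Chung's Lemma follows by integrating the distribution function: combining the inequality with the trivial bound $\le\mu(A)$ and substituting $t=e^{-\lambda}$,
\[ \int_A\sup_n\bigl(-\log\phi_n^A\bigr)\,\dif\mu=\int_0^\infty\mu\bigl(A\cap\{\inf_n\phi_n^A<e^{-\lambda}\}\bigr)\,\dif\lambda\le\int_0^\infty\min\{\mu(A),e^{-\lambda}\}\,\dif\lambda=-\mu(A)\log\mu(A)+\mu(A). \]
Summing over the finitely many $A\in\alpha$ and using $\sum_A\mu(A)=1$ yields $\int_X f\,\dif\mu\le H_\mu(\alpha)+1<\infty$, which in particular shows $f\in L^1(\mu)$.

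For the Martingale Theorem I would invoke Doob's martingale convergence theorem (the forward version for $\mscf_n\nearrow\mscf$, the reverse version for $\mscf_n\searrow\mscf$) to obtain $\phi_n^A\to\mbbe_\mu(1_A|\mscf)$ both $\mu$-a.e.\ and in $L^1(\mu)$. Since $\mu(A\cap\{\mbbe_\mu(1_A|\mscf)=0\})=\int_{\{\mbbe_\mu(1_A|\mscf)=0\}}\mbbe_\mu(1_A|\mscf)\,\dif\mu=0$, the limit is strictly positive $\mu$-a.e.\ on $A$, so $-\log\phi_n^A\to-\log\mbbe_\mu(1_A|\mscf)$ a.e.\ on $A$; summing over atoms gives $I_\mu(\alpha|\mscf_n)\to I_\mu(\alpha|\mscf)$ $\mu$-a.e. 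Finally, because $\phi_n^A\le 1$ we have $0\le I_\mu(\alpha|\mscf_n)\le f$ with $f\in L^1(\mu)$ by Chung's Lemma, so the Dominated Convergence Theorem upgrades the a.e.\ convergence to $L^1(\mu)$ convergence. The genuinely nontrivial input is the maximal inequality; the distribution-function computation and the passage from a.e.\ to $L^1$ convergence are routine once it and the classical martingale convergence theorems are in hand.
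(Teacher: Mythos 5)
Your proof is correct and follows essentially the same route as the paper, which states this proposition without proof as a classical fact from \cite{Gl,Pa04}: the argument given there is precisely your stopping-time maximal inequality $\mu\bigl(A\cap\{\inf_n \mbbe_\mu(1_A|\mscf_n)<t\}\bigr)\le t$, followed by integrating the distribution function to get the bound $-\mu(A)\log\mu(A)+\mu(A)$ per atom (hence $H_\mu(\alpha)+1$ in total), and then Doob/L\'evy martingale convergence combined with domination by $f$ for the a.e.\ and $L^1$ statements. Your uniform treatment of the decreasing case by applying the finite-filtration inequality to the reversed chain is also the standard device, so there is nothing to flag.
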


The following is also a classical result (see for example \cite[Lemma~18.2]{Gl}).
\begin{lem}\label{lem:entropy asymp} Let $(X,T)$ be a TDS and let $\mu\in \mclm(X,T)$. If $\alpha,\talpha,\beta\in\mathcal{P}_X$ with $\alpha\prec\talpha$, then
\[
\lim \limits_{n\rightarrow+\infty} H_\mu \big(\alpha|\talpha^-\vee (T^{n}\beta)^-\big)=H_\mu(\alpha|\talpha^-).
\]
\end{lem}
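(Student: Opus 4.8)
The plan is to read the left-hand side as a conditional entropy along a \emph{decreasing} sequence of $\sigma$-algebras and then to use the $T$-invariance of $\mu$ to turn the statement into one about an \emph{increasing} filtration, where the effect of the far-future copies of $\beta$ can be controlled. Write $\mscf_n$ for the $\sigma$-algebra generated by $\talpha^-\vee(T^n\beta)^-$. Since $(T^{n+1}\beta)^-=\bigvee_{k\ge n+2}T^k\beta\subseteq(T^n\beta)^-$, the $\mscf_n$ decrease and each contains $\talpha^-$; hence $H_\mu(\alpha|\mscf_n)$ is nondecreasing in $n$, bounded above by $H_\mu(\alpha|\talpha^-)$, so the limit exists and is $\le H_\mu(\alpha|\talpha^-)$ (equivalently, by Proposition~\ref{prop:martingale} it equals $H_\mu(\alpha\,|\,\bigcap_n\mscf_n)$). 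The entire content of the lemma is therefore the reverse inequality, i.e.\ that
\[
D_n:=H_\mu(\alpha|\talpha^-)-H_\mu\big(\alpha|\talpha^-\vee(T^n\beta)^-\big)\ \longrightarrow\ 0 .
\]

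First I would record the dynamical input. Because $(T^n\beta)^-=\bigvee_{k\ge1}T^k(T^n\beta)$ and conditional entropy is $T$-invariant (integrate Lemma~\ref{lem:info}(2) against $\mu$), applying $T^{-n}$ to every partition yields
\[
H_\mu\big(\alpha\,\big|\,\talpha^-\vee(T^n\beta)^-\big)=H_\mu\big(T^{-n}\alpha\,\big|\,\mscg_n\vee\beta^-\big),
\]
where $\mscg_n$ is generated by $\bigvee_{j\ge 1-n}T^{j}\talpha$, so that now $\mscg_n\nearrow\mscg_\infty$, the latter generated by $\bigvee_{j\in\mbbz}T^{j}\talpha$. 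The hypothesis $\alpha\prec\talpha$ enters here essentially: since $T^{-n}\alpha\prec T^{-n}\talpha$ and $\mscg_n\vee\sigma(T^{-n}\talpha)=\mscg_{n+1}$, the partition $T^{-n}\alpha$ is $\mscg_{n+1}$-measurable, hence $\mscg_\infty$-measurable, even though it is far from $\mscg_n$-measurable.

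The engine is then a squeeze at each fixed truncation. For fixed $m\ge1$ put $\beta^{(m)}:=\bigvee_{k=1}^{m}T^{k}\beta$, so $\beta^{(m)}\nearrow\beta^-$. From $\mscg_n\subseteq\mscg_n\vee\sigma(T^{-n}\alpha)\subseteq\mscg_\infty$ and the monotonicity of conditional entropy in the conditioning algebra,
\[
H_\mu(\beta^{(m)}|\mscg_\infty)\le H_\mu\big(\beta^{(m)}|\mscg_n\vee\sigma(T^{-n}\alpha)\big)\le H_\mu(\beta^{(m)}|\mscg_n).
\]
As $\mscg_n\nearrow\mscg_\infty$, the martingale statement of Proposition~\ref{prop:martingale} forces the outer terms to the common limit $H_\mu(\beta^{(m)}|\mscg_\infty)$, hence so does the middle one. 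Subtracting and using the information cocycle of Lemma~\ref{lem:info}(1) (integrated, which also gives the symmetry of conditional mutual information),
\[
H_\mu(\beta^{(m)}|\mscg_n)-H_\mu\big(\beta^{(m)}|\mscg_n\vee\sigma(T^{-n}\alpha)\big)=H_\mu(T^{-n}\alpha|\mscg_n)-H_\mu\big(T^{-n}\alpha|\mscg_n\vee\beta^{(m)}\big)\ \longrightarrow\ 0
\]
as $n\to\infty$, for every fixed $m$.

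The hard part is the passage from $\beta^{(m)}$ to the full $\beta^-$, i.e.\ the interchange of $n\to\infty$ and $m\to\infty$; this is precisely the familiar failure of $\bigcap_n\mscf_n$ to reduce to $\talpha^-$. By $T$-invariance the finite-$m$ quantity above equals $H_\mu(\alpha|\talpha^-)-H_\mu\big(\alpha|\talpha^-\vee\bigvee_{k=n+1}^{n+m}T^k\beta\big)$, and $D_n$ minus this is exactly the extra information that the $\beta$-coordinates beyond index $n+m$ carry about $\alpha$, on top of $\talpha^-$ and the intermediate block. I would show this remainder is small \emph{uniformly in $n$} as $m\to\infty$, using the finiteness of $H_\mu(\beta)$ together with the $L^1$-domination furnished by Chung's Lemma (Proposition~\ref{prop:martingale}(1)) to legitimize the interchange; granting this uniform tail bound, combining it with the fixed-$m$ convergence of the previous paragraph gives $D_n\to0$. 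I expect this uniform control of the $\beta$-tail to be the only genuinely delicate point, everything else being monotonicity of conditional entropy and the two martingale facts already recorded in Proposition~\ref{prop:martingale}.
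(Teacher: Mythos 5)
The paper itself gives no proof of this lemma (it quotes it as classical, citing \cite[Lemma~18.2]{Gl}), so your attempt has to be measured against the standard argument. Your first two steps are correct: the monotonicity/martingale argument for the easy inequality, and the fixed-$m$ convergence $I_{n,m}:=H_\mu(\alpha|\talpha^-)-H_\mu\big(\alpha\,\big|\,\talpha^-\vee\bigvee_{k=n+1}^{n+m}T^k\beta\big)\to 0$ as $n\to\infty$, whose proof via the squeeze $\mscg_n\subseteq\mscg_n\vee\widehat{T^{-n}\alpha}\subseteq\mscg_\infty$ is a genuinely nice use of the hypothesis $\alpha\prec\talpha$. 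The gap is your final paragraph, and it is not a removable technicality. With $D_n:=H_\mu(\alpha|\talpha^-)-H_\mu(\alpha|\talpha^-\vee(T^n\beta)^-)$ and $R_{n,m}:=D_n-I_{n,m}$, your own decomposition shows that, \emph{granting} the fixed-$m$ step, the uniform tail bound $\lim_{m\to\infty}\sup_n R_{n,m}=0$ is equivalent to the lemma: if the lemma failed, so that $D_n\searrow D_\infty>0$, then for every fixed $m$ one would have $R_{n,m}=D_n-I_{n,m}\to D_\infty>0$ as $n\to\infty$, hence $\sup_n R_{n,m}\ge D_\infty$ for every $m$. So the outline reformulates the problem; all of its difficulty is concentrated in the step you defer.

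Moreover, the ingredients you name for that step --- finiteness of $H_\mu(\beta)$ and the $L^1$-domination of Chung's Lemma --- are insufficient in principle, because they do not see the dynamics, and the abstract interchange statement they would have to deliver is false. Indeed, let $\alpha$ be generated by a uniform random bit $X$, let $(Z_j)_{j\ge 1}$ be i.i.d.\ bits independent of $X$, and let $\beta_{4^j}$ be generated by $Z_j$, $\beta_{4^j+j}$ by $X\oplus Z_j$, all other $\beta_k$ trivial. Then $H(\beta_k)\le\log 2$ for every $k$; for fixed $m$ the block $\bigvee_{k=n+1}^{n+m}\beta_k$ is independent of $\alpha$ once $n\ge 4^m+m$ (such a block never contains a complete pair $\{Z_j,\,X\oplus Z_j\}$, and any family with at most one member of each pair is jointly independent of $X$); yet the tail $\bigvee_{k>n}\beta_k$ determines $X$ for every $n$. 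Thus ``fixed blocks carry no information'' plus bounded entropy plus domination does not prevent information from persisting at infinity; only $T$-invariance, used globally, rules this out in the dynamical setting. That is exactly what the classical proof supplies: after reducing to $\alpha=\talpha$ (monotonicity of conditional mutual information in its first variable), one has the identity
\[
H_\mu\big(\alpha\,\big|\,\alpha^-\vee(T^n\beta)^-\big)\;=\;h_\mu(T,\alpha\vee\beta)-H_\mu\Big(\beta\,\Big|\,\beta^-\vee\bigvee_{j\ge -n}T^j\alpha\Big),
\]
obtained from $h_\mu(T,\gamma)=H_\mu(\gamma|\gamma^-)$ applied to $\gamma=\alpha\vee T^n\beta$ together with $h_\mu(T,\alpha\vee T^n\beta)=h_\mu(T,\alpha\vee\beta)$; letting $n\to\infty$ (martingale convergence) and invoking the Pinsker-type formula $h_\mu(T,\alpha\vee\beta)=h_\mu(T,\alpha)+H_\mu\big(\beta\,\big|\,\beta^-\vee\bigvee_{j\in\mbbz}T^j\alpha\big)$ --- itself proved by a Ces\`aro average along the orbit --- identifies the limit as $H_\mu(\alpha|\alpha^-)$. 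Some global dynamical input of this kind is indispensable, and it is absent from your outline.
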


\subsection{Local entropy}\label{subse:local entropy}
Let us recall measure-theoretic local entropy for general Borel probability measures introduced by Feng and Huang in \cite{FH}, which originates from the concept of Brin and Katok's local entropy for invariant measures  in \cite{BK}.

Let $(X,T)$ be a TDS. For $n \in \mbbn$, $x\in X$ and $r>0$,  let
\[
B_n(x,r , T):=\{ y\in X: \max_{0\le k \le n-1}  d(T^k x,T^k y)<r \}
\]
be the open {\it Bowen ball} centered at $x$ of step-length $n$  and radius $r$. We also denote it by $ B_n(x,r)$ for convenience when there is no risk of confusion for $T$.

Following Feng and Huang  \cite{FH}, we introduce:
\begin{defn}\label{def:FH} Given a TDS $(X,T)$ and
$\mu\in \mathcal{M}(X)$, the {\it measure-theoretic lower} and {\it upper entropy} of $\mu$ are defined respectively by
\begin{eqnarray*}
\underline{h}_{\mu}(T):=\int\underline{h}_{\mu}(T,x) \dif \mu(x), \qquad \overline{h}_{\mu}(T):=\int\overline{h}_{\mu}(T,x) \dif \mu(x),
\end{eqnarray*}
where
$$\underline{h}_{\mu}(T,x):=\lim_{\epsilon\rightarrow 0}\liminf_{n\rightarrow\infty}\frac{-\log
\mu(B_n(x,\epsilon,T))}{n},$$
$$\overline{h}_{\mu}(T,x):=\lim_{\epsilon\rightarrow 0}\limsup_{n\rightarrow\infty}\frac{-\log
\mu(B_n(x,\epsilon,T))}{n}.$$
\end{defn}
Recall that Brin and Katok \cite{BK} proved that for any $T$-invariant Borel probability measure $\mu$, $\underline{h}_{\mu}(T,x)=\overline{h}_{\mu}(T,x)$ for $\mu$-a.e. $x\in X$ is a $T$-invariant function, and
$$\underline{h}_{\mu}(T)=\overline{h}_{\mu}(T)=h_{\mu}(T).$$
In particular, when $\mu\in\mclm^e(X,T)$, $\underline{h}_{\mu}(T,x)=\overline{h}_{\mu}(T,x)=h_{\mu}(T)$ for $\mu$-a.e. $x\in X$.

\section{Measurable partition subordinate to local stable/unstable sets}\label{se:local}
%
 This section is denoted to the proof of Theorem~\ref{thm:local} and we divide it into three parts. In \S~\ref{subse:local SMB}, we present a local version of Shannon-McMillan-Breiman theorem for later use; in \S~\ref{subse:construct xi}, we construct $\xi$  explicitly and assertion (1) follows consequently; in \S~\ref{subse:local entropy proof} we prove assertion (2).

\subsection{A local version of Shannon-McMillan-Breiman theorem}\label{subse:local SMB}
The following statement can be seen as a local(conditional) version of the celebrated Shannon-McMillan-Breiman theorem.
\begin{prop}\label{prop:SMB local}
Let $(X,T)$ be a TDS and let $\mu\in \mclm(X,T)$. Let $\xi\subset \mscb_X$ be a measurable partition with $T\xi\prec \xi$ and let $\alpha\subset\mscb_X$ be a finite partition that satisfies $\alpha^-\prec \xi$. Denote

\[ \gamma_n:=\alpha^-\vee T^n\xi\,,~ \forall\,n\ge 0\qtext{and} \mscf:=\bigcap_{n=0}^\infty \hgamma_n \,.\]
Then we have:
\[
\lim_{N\to+\infty}\frac{1}{N} I_\mu\big( \alpha_0^{N-1} | \xi \big) = \mbbe_\mu\big(I_\mu(\alpha |\mscf) \big| \msce_T \big)\quad \mu \text{-a.e. \ \  and\ \  in } L^1(\mu)\,.
\]

\end{prop}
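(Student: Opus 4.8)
The plan is to rewrite the normalized information $\frac{1}{N}I_\mu(\alpha_0^{N-1}|\xi)$ as a Birkhoff-type average $\frac1N\sum_{k=0}^{N-1}\phi_k\circ T^k$ and then invoke Breiman's Lemma (Lemma~\ref{lem:Breiman}). First I would telescope using the information cocycle equation (Lemma~\ref{lem:info}(1)): by induction on $N$,
\[ I_\mu(\alpha_0^{N-1}|\xi)=\sum_{k=0}^{N-1} I_\mu\big(T^{-k}\alpha\,|\,\alpha_0^{k-1}\vee\xi\big)\,, \]
with the convention that $\alpha_0^{-1}$ is the trivial partition. Next I would apply the shift-equivariance of information, iterating Lemma~\ref{lem:info}(2) to the form $I_\mu(T^{-k}\alpha\,|\,T^{-k}\mscc)=I_\mu(\alpha|\mscc)\circ T^k$ with $T^{-k}\mscc=\widehat{\alpha_0^{k-1}\vee\xi}$; since $T^k\alpha_0^{k-1}=\bigvee_{i=1}^k T^i\alpha=\alpha_{-k}^{-1}$, this turns the $k$-th summand into $I_\mu\big(\alpha\,|\,\alpha_{-k}^{-1}\vee T^k\xi\big)\circ T^k$.

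The decisive step is to identify the conditioning $\sigma$-algebra. I claim $\widehat{\alpha_{-k}^{-1}\vee T^k\xi}=\hgamma_k$ for every $k$. Indeed, writing $\alpha^-=\alpha_{-k}^{-1}\vee T^k\alpha^-$ and using the hypothesis $\alpha^-\prec\xi$ together with the fact that the homeomorphism $T^k$ preserves refinement, one gets $T^k\alpha^-\prec T^k\xi$, so adjoining $T^k\alpha^-$ to $T^k\xi$ contributes nothing; hence $\widehat{\alpha^-\vee T^k\xi}=\widehat{\alpha_{-k}^{-1}\vee T^k\xi}$, which is exactly $\hgamma_k$. Consequently the $k$-th summand equals $\phi_k\circ T^k$ with $\phi_k:=I_\mu(\alpha|\hgamma_k)$, giving
\[ \frac1N\, I_\mu(\alpha_0^{N-1}|\xi)=\frac1N\sum_{k=0}^{N-1}\phi_k\circ T^k\,. \]
Moreover the second hypothesis $T\xi\prec\xi$ forces $T^{n+1}\xi\prec T^n\xi$, so $\hgamma_n=\widehat{\alpha^-\vee T^n\xi}$ is decreasing in $n$ and $\hgamma_n\searrow\mscf$.

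Having a genuinely monotone (decreasing) family of conditioning $\sigma$-algebras is what unlocks the martingale machinery, and the identity $\widehat{\alpha_{-k}^{-1}\vee T^k\xi}=\hgamma_k$ is the point I expect to require the most care: without it the conditioning algebras $\widehat{\alpha_{-k}^{-1}\vee T^k\xi}$ would be neither increasing nor decreasing (the $\alpha$-part grows while the $\xi$-part shrinks), and Proposition~\ref{prop:martingale} could not be applied directly. Granting it, the Martingale Theorem (Proposition~\ref{prop:martingale}(2)) yields $\phi_k\to I_\mu(\alpha|\mscf)$ $\mu$-a.e.\ and in $L^1(\mu)$, while Chung's Lemma (Proposition~\ref{prop:martingale}(1)) gives $\sup_k\phi_k\in L^1(\mu)$; since information functions are nonnegative, $\sup_k|\phi_k|\in L^1(\mu)$, the finiteness $H_\mu(\alpha)\le\log\#\alpha<\infty$ being automatic for the finite partition $\alpha$. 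Thus $(\phi_k)$ meets the hypotheses of Breiman's Lemma with limit $g=I_\mu(\alpha|\mscf)$, and Lemma~\ref{lem:Breiman} delivers $\frac1N\sum_{k=0}^{N-1}\phi_k\circ T^k\to\mbbe_\mu\big(I_\mu(\alpha|\mscf)\,|\,\msce_T\big)$ both $\mu$-a.e.\ and in $L^1(\mu)$, which is precisely the asserted conclusion.
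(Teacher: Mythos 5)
Your proposal is correct and follows essentially the same route as the paper's own proof: telescoping $I_\mu(\alpha_0^{N-1}|\xi)$ via the information cocycle equation, shifting with Lemma~\ref{lem:info}(2), identifying $\widehat{\alpha_{-k}^{-1}\vee T^k\xi}=\hgamma_k$ from $\alpha^-\prec\xi$ (the step the paper states in one clause and you rightly flag as the crux), and then combining the decreasing martingale theorem, Chung's Lemma, and Breiman's Lemma. The only difference is presentational: the paper sets up the martingale convergence of $g_n=I_\mu(\alpha|\gamma_n)$ first and does the telescoping second, whereas you proceed in the reverse order.
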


\begin{proof}
Define
\[ g_n=I_\mu \big( \alpha| \gamma_n \big)\,,~ \forall\,n\ge 0 \qtext{and} g=I_\mu(\alpha| \mscf ) \,.\]
From $T\xi\prec \xi$ we know that $\hgamma_n \searrow \mscf$. Thus by Chung's Lemma (see Proposition~\ref{prop:martingale} (1)),
\begin{equation}\label{eq:chung}
  \int_X \sup_{n\ge 0}g_n \dif\mu \le H_\mu(\alpha)+1<\infty \,;
\end{equation}
and by decreasing Martingale Theorem (see Proposition~\ref{prop:martingale} (2)),
\begin{equation}\label{eq:martingale}
  \lim_{n\to+\infty}g_n = g,\quad \mu \text{-a.e.\ \ and\ \ in } L^1(\mu) \,.
\end{equation}
\eqref{eq:chung} and \eqref{eq:martingale} enable us to apply Breiman's Lemma(see Lemma~\ref{lem:Breiman}) to $g_n$ and $g$, which implies that
\begin{equation}\label{eq:Breiman}
  \lim_{N\to+\infty}\frac{1}{N}\sum_{n=0}^{N-1}g_n\circ T^{n} = \mbbe_\mu(g | \msce_T ) , \quad \mu\text{-a.e. \ \ and \ in }L^1(\mu) \,.
\end{equation}
On the other hand, applying Lemma~\ref{lem:info} (1) repeatedly yields that
\[
I_\mu(\alpha_0^{N-1}|\xi) = I_\mu(\alpha|\xi) + I_\mu(\alpha_1^{N-1}|\alpha\vee\xi) = \cdots = I_\mu(\alpha|\xi)+ \sum_{n=1}^{N-1} I_\mu(T^{-n}\alpha|\alpha_0^{n-1}\vee\xi)\,.
\]
Then by Lemma~\ref{lem:info} (2), and noting that $\alpha^-\prec \xi$ implies $\gamma_n=\alpha_{-n}^{-1}\vee T^{n}\xi$ for every $n\ge 1$,  we obtain that
\[
I_\mu(\alpha_0^{N-1}|\xi) = I_\mu(\alpha|\xi)+ \sum_{n=1}^{N-1} I_\mu(\alpha|\alpha_{-n}^{-1}\vee T^{n}\xi)\circ T^{n} = \sum_{n=0}^{N-1}g_n\circ T^{n}\,.
\]
Combining this with \eqref{eq:Breiman}, the conclusion follows.
\end{proof}

\begin{rmk}
  In the statement of Proposition~\ref{prop:SMB local}, if we assume that $T\xi\succ \xi$  and $\alpha^-\succ \xi$ instead, then by replacing $\mscf$ with $\widehat{\alpha^-}$, the same conclusion holds and the proof is analogous. In this case, we can recover the classical Shannon-McMillan-Breiman theorem simply by taking $\xi=\{X\}$ to be the trivial partition.
\end{rmk}

As a direct corollary of Proposition~\ref{prop:SMB local}, we have:

\begin{cor}\label{cor:SMB local}
Under the settings in Proposition~\ref{prop:SMB local}, let us further assume that $\mu$ is ergodic. Let $\mu_x$ be the disintegration of $\mu$ over $\xi$ given in Proposition~\ref{prop:disintegration}. Then there exists $X_0\in \mscb_X$ with $\mu(X_0)=1$ such that for
\[ \Xi_x:=\xi(x)\cap X_0\,,~\forall\, x\in X_0 \qtext{and} \alpha_N:=\alpha_0^{N-1}\,,~\forall\, N\ge 1\,, \]
we have:

\begin{equation}\label{eq:ergodic SMB local}
\mu_x(\Xi_x)=1 \qtext{and} \lim_{N\rightarrow +\infty}\frac{-\log\mu_x \big( \alpha_N(y) \big)}{N}= H_\mu(\alpha|\mscf)\,,\ \ \forall\,x\in X_0,~\forall\,y\in \Xi_x \,.
\end{equation}

\end{cor}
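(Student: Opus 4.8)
The plan is to feed Proposition~\ref{prop:SMB local} through the disintegration of Proposition~\ref{prop:disintegration}, and then to upgrade the resulting $\mu$-a.e.\ pointwise convergence to a statement valid at \emph{every} point of a typical atom by using that the conditional measures are constant along atoms.

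First I would record that, since $\mu$ is ergodic, the sub-$\sigma$-algebra $\msce_T$ is trivial modulo $\mu$, so for any $g\in L^1(\mu)$ the conditional expectation $\mbbe_\mu(g\,|\,\msce_T)$ equals the constant $\int_X g\dif\mu$. Applying this to $g=I_\mu(\alpha\,|\,\mscf)$ turns the right-hand side of Proposition~\ref{prop:SMB local} into the constant $\int_X I_\mu(\alpha\,|\,\mscf)\dif\mu=H_\mu(\alpha|\mscf)$. Hence there is a full-measure set $X_a$ on which $\frac1N I_\mu(\alpha_0^{N-1}\,|\,\xi)\to H_\mu(\alpha|\mscf)$. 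Next I would translate this conditional information function into conditional measures: by the definition of $I_\mu(\cdot\,|\,\xi)$ and by \eqref{eq:dis}, for each fixed $N$ one has $\mbbe_\mu(1_A\,|\,\xi)(x)=\mu_x(A)$ for $\mu$-a.e.\ $x$ and every atom $A\in\alpha_0^{N-1}$; as there are only countably many such atoms over all $N$, these identities hold simultaneously on a single full-measure set $X_b$. Since exactly one atom $\alpha_N(x)$ of $\alpha_0^{N-1}=\alpha_N$ contains $x$, on $X_b$ we get $I_\mu(\alpha_0^{N-1}\,|\,\xi)(x)=-\log\mu_x(\alpha_N(x))$. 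Combining the two steps, on the full-measure set $X_1:=X_a\cap X_b\cap X'$ (with $X'$ as in Proposition~\ref{prop:disintegration}) we have $\tfrac{-\log\mu_x(\alpha_N(x))}{N}\to H_\mu(\alpha|\mscf)$.

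The remaining, and main, point is that the conclusion must hold at \emph{every} $y\in\Xi_x$, whereas the convergence just obtained is only guaranteed at $\mu$-a.e.\ point. The bridge is property (2) of Proposition~\ref{prop:disintegration}: for $x,y\in X'$ with $\xi(x)=\xi(y)$ one has $\mu_x=\mu_y$. Thus, whenever $y\in\xi(x)\cap X_1$, the convergence at $y$ reads $\tfrac{-\log\mu_y(\alpha_N(y))}{N}\to H_\mu(\alpha|\mscf)$, and replacing $\mu_y$ by $\mu_x$ gives precisely the desired limit $\tfrac{-\log\mu_x(\alpha_N(y))}{N}\to H_\mu(\alpha|\mscf)$. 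It therefore suffices to produce a full-measure set $X_0\subset X_1$ with $\mu_x\big(\xi(x)\cap X_0\big)=1$ for every $x\in X_0$; then $\Xi_x=\xi(x)\cap X_0\subset X_1$ makes the argument apply to all $y\in\Xi_x$, and this $\Xi_x$ is exactly the one in \eqref{eq:ergodic SMB local}.

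To build such an $X_0$ I would use the standard saturation trick. Let $N_0:=X\setminus X_1$ be the complementary null set; since $\int_X\mu_x(N_0)\dif\mu=\mu(N_0)=0$, the measurable set $G:=\{x\in X':\mu_x(N_0)=0\}$ has full measure. Because $\mu_x$ depends only on the atom $\xi(x)$ by Proposition~\ref{prop:disintegration}(2), membership in $G$ is constant along atoms, so $x\in G$ forces $\xi(x)\cap X'\subset G$. Setting $X_0:=G\cap X_1$, one has $\mu(X_0)=1$; and for $x\in X_0$ the measure $\mu_x$ is carried by $\xi(x)\cap X'\subset G$ while also giving full mass to $X_1$, whence $\mu_x(\Xi_x)=\mu_x\big(\xi(x)\cap X_0\big)=1$. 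This $X_0$ satisfies \eqref{eq:ergodic SMB local}. I expect this saturation step — reconciling the a.e.\ convergence with the ``for every $y$ in the atom'' requirement while keeping $X_0$ a union of atoms — to be the only genuinely delicate part, the rest being bookkeeping with the information function and the ergodic reduction of the conditional expectation.
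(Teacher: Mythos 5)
Your proof is correct and follows essentially the same route as the paper's: reduce the right-hand side of Proposition~\ref{prop:SMB local} to the constant $H_\mu(\alpha|\mscf)$ by ergodicity, identify $I_\mu(\alpha_0^{N-1}|\,\xi)(y)$ with $-\log\mu_y\big(\alpha_N(y)\big)$ through the disintegration, and then transfer the $\mu$-a.e.\ convergence to \emph{every} point of a typical atom using the atom-constancy $\mu_x=\mu_y$ from Proposition~\ref{prop:disintegration}(2). The one place you go beyond the paper is the saturation step: the paper simply sets $X_0=X_1\cap X_2$ and never verifies $\mu_x(\Xi_x)=1$, which requires $\mu_x(X_1)=1$ --- a priori true only for $\mu$-a.e.\ $x$, not for every $x\in X_0$; your set $G=\{x\in X':\mu_x(X\setminus X_1)=0\}$, being a union of atoms, closes exactly this gap, so your argument is if anything the more complete of the two.
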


\begin{proof}
Note that by definition,
\[
-\log\mu_x \big( \alpha_N(x) \big) = I_\mu( \alpha_N |\xi)(x), \quad \mu\text{-a.e. } x\in X\,.
\]
Also note that $\mbbe_\mu\big(I_\mu(\alpha |\mscf) \big| \msce_T \big) =H_\mu(\alpha|\mscf)$ holds $\mu$-a.e. since $\mu$ is assumed to be ergodic. Then  it follows from Proposition~\ref{prop:SMB local} that there exists $X_1\in \mscb_X$ with $\mu(X_1)=1$ such that
\[
\lim_{N\rightarrow +\infty}\frac{-\log\mu_y \big( \alpha_N(y) \big)}{N} = H_\mu(\alpha|\mscf)\,,\quad \forall\, y\in X_1\,.
\]
On the other hand, by Proposition~\ref{prop:disintegration} (2), there exists $X_2\in \mscb_X$ with  $\mu(X_2)=1$ such that for any $x\in X_2$,
$\mu_x(\xi(x)\cap X_2)=1$ and for any $x,y\in X_2$, $\xi(x)=\xi(y)$ implies that $\mu_x=\mu_y$. As a result, the following holds for any $N\ge 1$:
\[\mu_x \big( \alpha_N(y) \big) = \mu_y \big( \alpha_N(y) \big),\quad \forall\, x,y\in  X_2 \qtext{with} \xi(x)=\xi(y) \,.\]
Then for $X_0=X_1\cap X_2$, we have:
\[
\lim_{N\rightarrow +\infty}\frac{-\log\mu_x \big( \alpha_N(y) \big)}{N}= H_\mu(\alpha|\mscf),\quad \forall\, x\in X_0\,,~\forall\, y\in \xi(x)\cap X_0\,,
\]
which completes the proof.

\end{proof}

\subsection{Construction of $\xi$}\label{subse:construct xi}
Let $(X,T)$, $\mu$ and $\delta>0$ be as given in the statement of Theorem~\ref{thm:local}. The following construction of $\xi$ subordinate to the $\delta$-unstable sets is based on the construction in \cite{RS} or \cite[\S~6-2]{Pa69}.

Recall that for a measurable partition $\alpha$ of $X$,
$\alpha^-:=\bigvee_{n=1}^\infty T^{n}\alpha$. Denote $\partial\alpha:=\cup_{A\in\alpha}\partial A$. Let $\{\beta_i\}_{i=1}^{\infty}$ be a family of finite Borel partitions of $X$ with the following properties:
\begin{itemize}
  \item $\beta_i$ is finer and finer, i.e. $\beta_1\prec\beta_2\prec\beta_3\prec\cdots$;
  \item $\diam(\beta_1)\le\delta, \diam(T\beta_1)\le \delta$ and $\lim\limits_{i\rightarrow +\infty} \diam(\beta_i)=0$;
  \item $\mu(\partial\beta_i)=0$, $\forall~i\in \mathbb{N}$.
\end{itemize}
Starting from $k_1=0$, we can find a sequence of nonnegative integers $k_1,k_2,\cdots$ such that for $\alpha_q=\bigvee_{p=1}^q T^{k_p}\beta_p$, the following holds for every $q\ge 2$:

\begin{align}\label{eq:alpha_q}
H_{\mu}(\alpha_p |\alpha_{q-1}^-)-H_\mu(\alpha_p|\alpha_q^-)<
\frac{1}{p\cdot 2^{q-p}}\,,~ p=1,2,\cdots,q-1 \,.
\end{align}
The choice of such $k_q$ can be determined inductively on $q$ as follows. Once $k_p$ has been chosen for $1\le p<q$, applying Lemmea \ref{lem:entropy asymp} to $\alpha=\alpha_p$, $\talpha=\alpha_{q-1}$ and $\beta=\beta_q$ for each $1\le p\le q-1$, we can find $k_q$ such that \eqref{eq:alpha_q} holds.

Then we can specify our choice of $\xi$:
\[
\mathcal{P}:=\bigvee_{p=1}^\infty \alpha_p \qtext{and} \xi:=\mathcal{P}^- \,.
\]
Given $p,m\in \mathbb{N}$, taking sum on both sides of \eqref{eq:alpha_q} for $q=p+1,\cdots, p+m$  yields that
$$H_\mu(\alpha_p|\alpha_p^-)-H_\mu(\alpha_p|\alpha_{p+m}^-)<\frac{1}{p}\,.$$
Let $m\rightarrow +\infty$, one has
$$H_\mu(\alpha_p|\xi)\ge H_\mu(\alpha_p|\alpha_p^-)-\frac{1}{p}=h_\mu(T,\alpha_p)-\frac{1}{p}.$$
Define
$$c_p=H_\mu\Big(\alpha_p\big|\bigcap_{n=0}^{+\infty}\widehat{\gamma_{p,n}}\Big)\,,\quad\text{where }\  \gamma_{p,n}:=\alpha_p^-\vee T^{n}\xi\,.$$
Since $\alpha_p^-\prec \gamma_{p,n}\prec \xi$ for each $n$, $ H_\mu(\alpha_p|\xi)\le c_p\le H_\mu(\alpha_p|\alpha_p^-)$, and hence
$$h_\mu(T,\alpha_p)-\frac{1}{p}\le c_p\le h_\mu(T,\alpha_p).$$
Then
\begin{align}
\sup_{p\ge 1} c_p=\sup_{p\ge 1}h_\mu(T,\alpha_p)=h_\mu(T).
\end{align}

Next let us show that $\overline{\xi(x)}\subset W_\delta^u(x,T)$ for  each $x\in X$. Given $x\in X$, note that
\[
\overline{\xi(x)}\subset \overline{(T\beta_1)(x)} \qtext{and} T^{-i}(\overline{\xi(x)})=\overline{T^{-i}\big( \xi(x)\big)}=\overline{(T^{-i}\xi)(T^{-i}x)}
\subset \overline{\beta_1(T^{-i}x)} \,,\ \forall\,i\ge 1\,.
\]
Thus
\begin{align}\label{diam-upper-1}
\diam\Big( T^{-i}\big( \overline{\xi(x)}\big)\Big)\le \max\{\diam(T\beta_1),\diam(\beta_1)\}\le \delta \,,\ \forall\, i\ge 0\,.
\end{align}
For any $j\ge 1$ and $i\ge 0$, one has
\begin{align*}
T^{-(k_j+i)}\big( \overline{\xi(x)} \big)=\overline{T^{-(k_j+i)}\big( \xi(x)\big)}=\overline{(T^{-(k_j+i)}\xi)(T^{-(k_j+i)}x)}
\subset \overline{\beta_j(T^{-(k_j+i)}(x))}\,.
\end{align*}
Thus
\begin{align}\label{diam-upper-2}
\diam\Big( T^{-(k_j+i)}\big(\overline{\xi(x)}\big)\Big) \le \diam(\beta_j),\quad\forall\,j\ge 1,~i\ge0\,.
\end{align}
Due to \eqref{diam-upper-1}, \eqref{diam-upper-2} and $\lim_{j\to\infty}\diam(\beta_j)=0$, $\overline{\xi(x)}\subset W_\delta^u(x,T)$. The proof of assertion (1) is done.

\subsection{Proof of local entropy formula}\label{subse:local entropy proof}

In this subsection we prove assertion (2) in Theorem \ref{thm:local}. For preparation  we need the following result.

\begin{lem}\label{lem:shift}
Let $(X,T)$ be a TDS and let $\mu\in \mathcal{M}(X)$. Let $\alpha$ and $\beta_1\prec\cdots\prec \beta_n \prec\cdots$ be finite partitions of $X$. If there exists a constant $c$ such that
$$\lim_{n \to  +\infty} \frac{-\log \mu\big( (\alpha\bigvee \beta_n)(x)\big)} {n}=c, \ \ \mu\text{-a.e.}~x\in X \,,$$
then the following holds:
$$\lim_{n \to +\infty} \frac{-\log \mu\big(  \beta_n(x) \big)} {n}=c,\quad \mu\text{-a.e.}~x\in X\,.$$
In particular, for any finite partition $\alpha$ and any $k\ge 1$, we have:
\[ \lim_{n \to  +\infty} \frac{-\log \mu\big( \alpha_{0}^{n}(x)\big)} {n}=c, \ \ \mu\text{-a.e.}~x\in X  \implies  \lim_{n \to  +\infty} \frac{-\log \mu\big( \alpha_{k}^{n}(x)\big)} {n}=c, \quad \mu\text{-a.e.}~x\in X \,.\]
\end{lem}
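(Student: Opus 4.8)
The plan is to reduce both statements to a single fact: passing from the refined partition $\alpha\vee\beta_n$ to $\beta_n$ costs exactly the conditional information of $\alpha$ given $\beta_n$, and this cost grows sublinearly in $n$. Write $\alpha=\{A_1,\dots,A_m\}$. Since $\widehat{\beta_n}$ is the ($\mu$-completed) $\sigma$-algebra generated by the finite partition $\beta_n$, for each $n$ one has $\mbbe_\mu(1_{A_i}|\widehat{\beta_n})(x)=\mu(A_i\cap\beta_n(x))/\mu(\beta_n(x))$ for $\mu$-a.e.\ $x$, and therefore, directly from the definition of the information function,
\[
I_\mu(\alpha|\widehat{\beta_n})(x)=-\log\frac{\mu\big((\alpha\vee\beta_n)(x)\big)}{\mu\big(\beta_n(x)\big)},\qquad \mu\text{-a.e.}~x\in X .
\]
Rearranging and dividing by $n$ gives the key identity
\[
\frac{-\log\mu\big((\alpha\vee\beta_n)(x)\big)}{n}=\frac{-\log\mu\big(\beta_n(x)\big)}{n}+\frac{I_\mu(\alpha|\widehat{\beta_n})(x)}{n}.
\]
As $I_\mu(\alpha|\widehat{\beta_n})\ge 0$, the hypothesis already yields $\limsup_n \frac{-\log\mu(\beta_n(x))}{n}\le c$ for $\mu$-a.e.\ $x$; so the only thing left is to prove that the last term tends to $0$.

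For this I would invoke Chung's Lemma. Because $\beta_1\prec\beta_2\prec\cdots$, the associated $\sigma$-algebras increase, $\widehat{\beta_n}\nearrow\bigvee_n\widehat{\beta_n}$, so Chung's Lemma (Proposition~\ref{prop:martingale}(1)) provides an integrable dominating function $f:=\sup_{n\ge 1}I_\mu(\alpha|\widehat{\beta_n})\in L^1(\mu)$ with $\int f\,\dif\mu\le H_\mu(\alpha)+1$. (Although Proposition~\ref{prop:martingale} is phrased for $\mu\in\mclm(X,T)$, Chung's Lemma is purely martingale-theoretic and the invariance plays no role, so it applies verbatim to the present $\mu\in\mclm(X)$.) Consequently $f(x)<\infty$ for $\mu$-a.e.\ $x$, whence $0\le \frac{I_\mu(\alpha|\widehat{\beta_n})(x)}{n}\le\frac{f(x)}{n}\to 0$. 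Feeding this back into the key identity and using $\frac{-\log\mu((\alpha\vee\beta_n)(x))}{n}\to c$ from the hypothesis, we obtain $\lim_n \frac{-\log\mu(\beta_n(x))}{n}=c$ for $\mu$-a.e.\ $x$, which is the first assertion.

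The ``in particular'' clause then follows by a bookkeeping reduction: set $\talpha:=\alpha_0^{k-1}$, a finite partition, and $\beta_n:=\alpha_k^n$ for $n\ge k$ (and trivial for $n<k$, which is irrelevant to the limit). The chain $\beta_n\prec\beta_{n+1}$ holds, and $\talpha\vee\beta_n=\alpha_0^n$, so the assumed convergence of $\frac{-\log\mu(\alpha_0^n(x))}{n}$ is exactly the hypothesis of the first part applied to $(\talpha,\beta_n)$ in place of $(\alpha,\beta_n)$, and its conclusion is precisely $\lim_n \frac{-\log\mu(\alpha_k^n(x))}{n}=c$. The one point that needs care---and the only genuine obstacle---is the temptation to expect $I_\mu(\alpha|\widehat{\beta_n})\to 0$; this is generally \emph{false}, since the limit $\sigma$-algebra $\bigvee_n\widehat{\beta_n}$ need not render $\alpha$ measurable (for $\beta_n=\alpha_k^n$ it is the ``future'' algebra, which does not detect $\alpha$). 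The resolution is that we never need convergence to $0$, only sublinear growth, and the uniform $L^1$ domination furnished by Chung's Lemma delivers exactly that.
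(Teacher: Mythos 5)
Your proof is correct, but it follows a genuinely different route from the paper's. The paper argues combinatorially: fixing $\lambda<c$, it shows the set where $\mu\big(\beta_n(x)\big)\ge e^{-\lambda n}$ holds for infinitely many $n$ is null, by assigning (pigeonhole) to each such atom $B\in\beta_n$ an atom $A\in\alpha$ maximizing $\mu(A\cap B)$, so that $\mu(B)\le\#\alpha\cdot\mu(A\cap B)$, then decomposing unions of these atoms into disjoint ones using the nestedness of the $\beta_n$ and invoking the hypothesis on $\alpha\vee\beta_n$ in a Borel--Cantelli-type estimate; only the nontrivial lower bound is treated this way, the upper bound being immediate, as in your first observation. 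You instead identify the discrepancy between the two quantities exactly as the conditional information $I_\mu(\alpha\,|\,\widehat{\beta_n})(x)=-\log\big(\mu((\alpha\vee\beta_n)(x))/\mu(\beta_n(x))\big)$ and show it grows sublinearly via Chung's maximal inequality, using nestedness to obtain an increasing filtration. Your two points of care are precisely the right ones: quoting Proposition~\ref{prop:martingale} literally would be a gap, since its hypotheses include $T$-invariance of $\mu$, but Chung's Lemma is indeed purely martingale-theoretic and holds for arbitrary $\mu\in\mclm(X)$ with an increasing sequence of $\sigma$-algebras (this is how it appears in \cite{Gl,Pa04}); and one cannot expect $I_\mu(\alpha\,|\,\widehat{\beta_n})\to 0$, so the integrable envelope $\sup_n I_\mu(\alpha\,|\,\widehat{\beta_n})\in L^1(\mu)$ is exactly what is needed. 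Your argument is shorter and more conceptual; what the paper's buys in exchange is self-containedness, using nothing beyond pigeonhole and countable additivity rather than the martingale maximal inequality.
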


\begin{proof}
Noting that
\[\limsup_{n\to +\infty} \frac{-\log \mu\big( \beta_n(x)\big)} {n}\le \limsup_{n\to +\infty} \frac{-\log \mu\big( (\alpha\bigvee \beta_n)(x)\big)} {n}\,,\]
it suffices to show that for an arbitrary fixed constant $\lambda<c$, we have
\[
\mu(E)=0 \qtext{for} E:=\{x\in X: \mu\big( \beta_n(x)\big)\ge e^{-\lambda n} \text{ holds for infinitely many } n\}.
\]
To this end, for each $n\ge 1$ and each $A\in \alpha$, let $\mclb(n,A)$ be the collection of set $B\in \beta_n$ satisfying the following two properties:
\begin{itemize}
  \item [(a)] $\mu(A\cap B)\ge\mu(A'\cap B)$, $\forall~A'\in\alpha$;
  \item [(b)] $\mu(A\cap B)\ge \frac{1}{\#\alpha}e^{-\lambda n}$.
\end{itemize}
Let $F_{n,A}=\cup_{B\in \mclb(n,A)} B$ and let $E_A=\cap_{m=1}^\infty\cup_{n=m}^\infty F_{n,A}$. We claim that $E\subset \cup_{A\in \alpha}E_A$. To see this, pick an arbitrary $x\in E$. Then  there exists an infinite set $S\subset\mbbn$ such that $\mu\big( \beta_n(x)\big)\ge e^{-\lambda n}$  holds for  $n\in S$. Since $\#\alpha$ is finite, by pigeonhole principle, there exists $A\in\alpha$ and an infinite subset $S'$ of $S$ such that for each $n\in S'$, $A$ and $B=\beta_n(x)$ satisfy property (a). Property (b) also follows for such a pair of $A$ and $B$. By the definition of $E_A$, $x\in E_A$, which verifies the claim.

To complete the proof, we only need to show that $\mu(E_A)=0$ for each $A\in\alpha$. Due to property (b) and the definition of $E_A$,
\[A\cap E_A\subset\{x\in A: \mu\big( (\alpha\vee \beta_n)(x)\big)\ge \frac{1}{\#\alpha}e^{-\lambda n} \text{ holds for infinitely many } n\}\,.\]
Then by the assumption in the lemma, $\mu(A\cap E_A)=0$. Note that for each $m\ge 1$, $\cup_{n=m}^\infty F_{n,A}$ can be written as a disjoint union $\sqcup_{B\in \mclc }B$, where $\mclc=\mclc(m,A)$ is a subset of  $\cup_{n=m}^\infty \mclb(n,A)$. On the other hand, for each $B\in \mclc$, $\mu(B)\le \#\alpha\cdot\mu(A\cap B)$. Taking sum over all $B\in \mclc$ on both sides yields that
\[ \mu(\cup_{n=m}^\infty F_{n,A} )\le \#\alpha\cdot \mu(A\cap \cup_{n=m}^\infty F_{n,A} )\,.\]
Letting $m\to\infty$, it follows that $\mu(E_A)\le \#\alpha\cdot\mu(A\cap E_A)=0$. The conclusion follows.
\end{proof}

Now we are ready to prove assertion (2) in Theorem~\ref{thm:local}. According to Corollary~\ref{cor:SMB local}, the following holds for each $j\ge 1$:
\[
\lim_{N\to+\infty}\frac{-\log\mu_x \big( (\bigvee\limits_{i=0}^{N-1}T^{-i} \alpha_j)(y) \big)}{N}=c_j \,,  \quad \mu_x\text{-a.e.}~y\in X,\quad \mu\text{-a.e.}~x\in X\,.
\]
Then by Lemma~\ref{lem:shift}, it follows that
\begin{equation}\label{eq:SMB local shift}
  \lim_{N\to+\infty}\frac{-\log\mu_x \big( (\bigvee\limits_{i=0}^{N-1}(T^{-(k_j+i)}\alpha_j)(y) \big)}{N}=c_j \,,  \quad \mu_x\text{-a.e.}~y\in X,\quad \mu\text{-a.e.}~x\in X\,.
\end{equation}
Given $\epsilon>0$, we can find $N=N(\epsilon)\in \mathbb{N}$ such that $\diam(\beta_j)\le \epsilon$ for $j\ge N$, as $\lim\limits_{i\rightarrow +\infty}\diam(\beta_i)=0$. Note that $T^{-k_j}\alpha_j\succ \beta_j$. Hence when $j\ge N$, for any $n\ge 1$ we have $\big(\bigvee_{i=0}^{n-1}T^{-(k_j+i)}\alpha_j\big)(y)\subset B_n(y,\epsilon,T)$, and consequently
\[
\frac{-\log \mu_x (B_n(y,\epsilon,T))}{n}\le \frac{-\log \mu_x\big( \bigvee_{i=0}^{n-1}(T^{-(k_j+i)}\alpha_j)(y) \big)}{n}\,,\quad \mu\text{-a.e.}~x\in X, \forall y\in X\,.
\]
Thus for $\mu$-a.e. $x\in X$ and any $y\in X$, we have:
\begin{align}\label{eq:upper local entropy}
\overline{h}_{\mu_x}(T,y)&= \lim_{\epsilon\rightarrow 0}\limsup_{n\rightarrow+\infty}\frac{-\log\mu_x\big( B_n(y,\epsilon,T)\big)}{n}\nonumber\\
&\le \sup_{j\ge 1} \limsup_{n\rightarrow+\infty}\frac{-\log\mu_x \big( (\bigvee_{i=0}^{n-1}(T^{-(k_j+i)}\alpha_j)(y) \big)}{n}.
\end{align}
Combining \eqref{eq:SMB local shift} and \eqref{eq:upper local entropy}, we obtain that:
\[
\overline{h}_{\mu_x}(T,y) \le \sup_{j\ge 1} c_j=h_\mu(T)\,, \quad \mu_x\text{-a.e.}~y\in X,\quad \mu\text{-a.e.}~x\in X\,.
\]

To complete the proof, it remains to show that for $\mu$-a.e. $x\in X$, $\underline{h}_{\mu_x}(T,y) \ge h_\mu(T)$ holds for $\mu_x$-a.e. $y\in X$.
Since $\sup_{j\ge 1}c_j=h_\mu(T)$, it suffices to prove that for any $j\in\mathbb{N}$ and $\mu$-a.e. $x\in X$, $\underline{h}_{\mu_x}(T, y)\ge c_j$ holds for $\mu_x$-a.e. $y\in X$. Due to Corollary~\ref{cor:SMB local} and the definition of $c_j$(recall that $c_j=H_\mu(\alpha_j|\xi)$ and $\mu(\partial\alpha_j)=0$), the problem is reduced to verifying the lemma below.

\begin{lem}\label{lem:local entropy lower bound}
 Under the settings in Corollary~\ref{cor:SMB local}, assume that $\mu(\partial\alpha)=0$ additionally. Then for $\mu$-a.e. $x\in X$,  $\underline{h}_{\mu_x}(T, y)\ge H_\mu(\alpha|\mscf)$ holds for $\mu_x$-a.e. $y\in X$.
\end{lem}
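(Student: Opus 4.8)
The plan is to trap the Bowen ball $B_n(y,\epsilon,T)$ inside a partition atom whose $\mu_x$-measure is governed by Corollary~\ref{cor:SMB local}, paying only a small price for the orbit of $y$ coming close to $\partial\alpha$. Fix $\epsilon>0$ small, chosen (harmlessly, since only countably many values are excluded) so that $\mu(\{z:\dist(z,\partial\alpha)=\epsilon\})=0$, and write $C_\epsilon:=\{z\in X:\dist(z,\partial\alpha)<\epsilon\}$ for the collar of $\partial\alpha$. Because $\underline{h}_{\mu_x}(T,y)=\lim_{\epsilon\to0}\liminf_n(\,\cdot\,)$ with the inner $\liminf$ nondecreasing as $\epsilon\downarrow0$, it suffices to find constants $\kappa(\epsilon)$ with $\kappa(\epsilon)\to0$ as $\epsilon\to0$ such that
\[
\liminf_{n\to\infty}\frac{-\log\mu_x\big(B_n(y,\epsilon,T)\big)}{n}\ge H_\mu(\alpha|\mscf)-\kappa(\epsilon)
\]
for $\mu$-a.e.\ $x$ and $\mu_x$-a.e.\ $y$, and then let $\epsilon\to0$.

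The geometric input is that whenever $T^ky\notin C_\epsilon$ one has $B(T^ky,\epsilon)\subset\alpha(T^ky)$, so every $z\in B_n(y,\epsilon,T)$ carries the same $\alpha$-symbol as $y$ at that time $k$. Consequently $B_n(y,\epsilon,T)$ is contained in the partial atom
\[
R_n(y):=\bigcap_{0\le k<n,\ T^ky\notin C_\epsilon}T^{-k}\big(\alpha(T^ky)\big)\ \supset\ \alpha_0^{n-1}(y),
\]
cut out only by the ``deep'' times, whence $\mu_x(B_n(y,\epsilon,T))\le\mu_x(R_n(y))$. Letting $S_n(y):=\#\{0\le k<n:T^ky\in C_\epsilon\}$ count the collar visits, I would first transport Birkhoff's theorem from $\mu$ to the fibres through Proposition~\ref{prop:disintegration}: a $\mu$-a.e.\ statement in $y$ holds, for $\mu$-a.e.\ $x$, at $\mu_x$-a.e.\ $y$, so $\tfrac1nS_n(y)\to\mu(C_\epsilon)$, and $\mu(C_\epsilon)\downarrow\mu(\partial\alpha)=0$ as $\epsilon\to0$.

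It then remains to compare $-\log\mu_x(R_n(y))$ with $-\log\mu_x(\alpha_0^{n-1}(y))$, which by Corollary~\ref{cor:SMB local} grows like $n\,H_\mu(\alpha|\mscf)$. Their difference is the information discarded at the collar times, and the plan is to dominate it by an ergodic sum of the form $\sum_{0\le k<n}1_{C_\epsilon}(T^ky)\,\cdot\,I_\mu\big(\alpha\,|\,\alpha^-\vee T^k\xi\big)(T^ky)$. The information functions here are uniformly dominated by an $L^1(\mu)$ function (Chung's Lemma, Proposition~\ref{prop:martingale}(1)) and converge to $I_\mu(\alpha|\mscf)$ (decreasing Martingale Theorem), so Breiman's Lemma (Lemma~\ref{lem:Breiman}) applies to $g_k=1_{C_\epsilon}\cdot I_\mu(\alpha|\alpha^-\vee T^k\xi)$ and yields, for $\mu_x$-a.e.\ $y$, the limit $\int_{C_\epsilon}I_\mu(\alpha|\mscf)\,d\mu=:\kappa(\epsilon)$; since $\mu(\partial\alpha)=0$ and $I_\mu(\alpha|\mscf)\in L^1(\mu)$, dominated convergence gives $\kappa(\epsilon)\to0$. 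Combining the three displays yields the desired inequality, and $\epsilon\to0$ finishes the proof.

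The hard part will be this last comparison made rigorous. The $\sigma$-algebras that condition the partial atom $R_n(y)$ (only the deep coordinates, including ones in the future of $k$) do not match coordinatewise the one-sided conditionings $\alpha^-\vee T^k\xi$ of the Shannon--McMillan--Breiman decomposition of $-\log\mu_x(\alpha_0^{n-1}(y))$, so a term-by-term pointwise domination is not directly available. I expect the clean route is to first bound the discarded information in the integrated (conditional-entropy) form, where conditioning on larger $\sigma$-algebras only decreases entropy, and then upgrade to the pointwise a.e.\ statement via the Breiman-type averaging above; controlling this collar information uniformly in $n$ is the crux of the lemma.
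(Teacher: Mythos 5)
Your plan is genuinely different from the paper's (information-theoretic bookkeeping via Breiman's Lemma versus the Brin--Katok covering/counting argument), but the step you yourself flag as ``the hard part'' is not a technical loose end: it is the entire content of the lemma, and none of the routes you sketch can close it. Write $D_n(y):=\log\bigl(\mu_x(R_n(y))/\mu_x(\alpha_0^{n-1}(y))\bigr)\ge 0$ for the discarded information; you need $\limsup_{n}D_n(y)/n\le\kappa(\epsilon)$ for $\mu_x$-a.e.\ $y$. (i) A pointwise domination of $D_n$ by $\sum_{k<n}1_{C_\epsilon}(T^ky)\,I_\mu(\alpha|\alpha^-\vee T^k\xi)(T^ky)$ is unavailable for a structural reason: conditional \emph{information}, unlike conditional entropy, is not pointwise monotone in the conditioning $\sigma$-algebra (extra knowledge can make a given symbol more surprising), so the chain-rule increments of $-\log\mu_x(R_n(y))$ admit no term-by-term comparison with the increments $g_k\circ T^k$ of $-\log\mu_x(\alpha_0^{n-1}(y))$. (ii) Breiman's Lemma cannot be applied to the increments of $-\log\mu_x(R_n(y))$ either: they are not of the form $h_k\circ T^k$ with $h_k$ converging to a fixed limit, because which coordinates are conditioned on depends on the collar pattern of the orbit of $y$ inside the window $[0,n)$; indeed $\{R_n(y)\}_y$ is not even a partition ($y'\in R_n(y)$ does not force $R_n(y')=R_n(y)$, since $y'$ has its own collar times), so there is no fixed filtration to which Chung/martingale-type results attach. (iii) The fallback ``bound in integrated form, then upgrade'' also fails: even granting $\int D_n\,\dif\mu_x\le n\kappa(\epsilon)+o(n)$ (which monotonicity of conditional entropy does give), Chebyshev only yields $\mu_x\{D_n\ge n(\kappa+\delta)\}\le\kappa/(\kappa+\delta)$, a bound that neither tends to zero nor is summable in $n$; and $D_n$ has no cocycle or subadditive structure, so no ergodic theorem or Borel--Cantelli argument converts this $L^1$-type bound into the required a.e.\ statement.

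There is also a smaller, fixable error at the geometric starting point: in a general compact metric space, $\dist(z,\partial\alpha)\ge\epsilon$ does \emph{not} imply $B(z,\epsilon)\subset\alpha(z)$, because balls need not be connected. (Take $X=\{0\}\cup\{1/n:n\ge1\}$ with $\alpha$ separating even from odd indices: then $\partial\alpha=\{0\}$, yet $z=\tfrac12$ has $\dist(z,\partial\alpha)=\tfrac12$ while $B(z,\tfrac25)$ meets both atoms.) Thus even your containment $B_n(y,\epsilon,T)\subset R_n(y)$ fails as stated; one must define the bad set as the paper does, $V_\epsilon(\alpha):=\{z:B(z,\epsilon)\not\subset\alpha(z)\}$, which still satisfies $\mu(V_\epsilon(\alpha))\to\mu(\partial\alpha)=0$ because $\bigcap_{\epsilon>0}V_\epsilon(\alpha)=\partial\alpha$. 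With that fix, your counting observation (at most $(\#\alpha)^{S_n(y)}$ atoms of $\alpha_0^{n-1}$ compose $R_n(y)$) is correct and plays the role of the paper's Hamming-ball estimate \eqref{eq:Striling upper}; what is missing is control of the \emph{fat} atoms among them: one cannot bound $\mu_x(R_n(y))$ by (number of atoms) times $\mu_x(\alpha_0^{n-1}(y))$, since the other atoms in the union may have far larger measure than the one containing $y$. The paper's appendix resolves exactly this point by discarding, for each $n$, the set $E_n$ of points whose Hamming neighborhood contains an atom of $\mu_x$-measure greater than $\exp\bigl((-c+2(\Delta+\epsilon))n\bigr)$ (with $c=H_\mu(\alpha|\mscf)$ and $\Delta$ the Stirling constant \eqref{eq:Stirling const}), proving $\mu_x(E_n)\le e^{-\Delta n}$ by a counting argument (the families $\mclf_n$ and $\mclg_n$ there) and then invoking Borel--Cantelli. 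Some device of this kind is indispensable, and it is precisely what your proposal lacks.
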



The lemma above follows from the argument of Brin and Katok \cite{BK} in  bounding their lower local entropy from below. For completion, we give a self-contained proof of it in the appendix. This completes the proof of Theorem~\ref{thm:local}.

\section{Lower bound of Hausdorff dimension}\label{se:dim}

In this section, we shall complete the proof Theorem~\ref{thm:main TDS} in \S~\ref{subse:main thm proof}. For preparation, we need an alternative definition of maximal Lyapunov exponent introduced by Kifer \cite{Kif} and show the equivalence between two definitions in \S~\ref{subse:max exp}. Finally, we apply Theorem~\ref{thm:main TDS} to infinitely dimensional $C^1$ systems in \S~\ref{subse:C1 infinite}.

\subsection{On maximal Lyapunov exponent}\label{subse:max exp}  Following \cite{Kif}, define(here we still set $\sup\varnothing =0$)
\[
\mcll_n^r(x):=\sup_{y\in B_n(x,r)\setminus\{x\}}\frac{d(T^n x,T^n y)}{d(x,y)} \in [0,+\infty].
\]
By continuity of $T$ and openness of $B_n(x,r)$, it is easy to see that $\mcll_n^r$ is lower-semicontinuous.  Recall $\mcll_n$ defined in \eqref{eq:Lip const point} and note that by continuity, for each $n\ge 1$, $\mcll_n^r$ monotonically converges to $\mcll_n$ as $r\searrow 0$. It follows that $\mcll_n$ is a Borel function, and therefore $\int_X \log^+\mcll_n \dif\mu$ in \eqref{eq:maximal Lyapunov exp} is well-defined. Moreover, as observed by Kifer \cite{Kif}, for each $r>0$, $\log^+\mcll_n^r$ is a subadditive sequence of functions in $n$ w.r.t. $T$. To proceed, let us make the following simple observation:

\begin{lem}\label{lem:mono subadd}
Let $(X,T)$ be a TDS and let $\mu\in\mclm(X,T)$. Let $\phi_n^m\ge 0$ be measurable functions on $X$ with $\phi_1^1\in L^1(\mu)$. Suppose that for each $m$, $\phi_n^m$ is subadditive in $n$ w.r.t. $T$; and for each $n$, $\phi_n^m$ is decreasing in $m$. Then the following two iterated limits converge in both $\mu$-a.e. and $L^1(\mu)$ sense, and the order is exchangeable:
  \[
  \lim_{m\to\infty} \lim_{n\to\infty}\frac{1}{n}\phi_n^m = \lim_{n\to\infty} \lim_{m\to\infty}\frac{1}{n}\phi_n^m\,.
  \]
\end{lem}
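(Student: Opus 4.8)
The plan is to reduce everything to Kingman's subadditive ergodic theorem (Theorem~\ref{thm:Kingman}), applied in two different ways, and then to reconcile the two iterated limits by an elementary ``exchange of infima'' performed at the level of integrals. First I would record integrability: by subadditivity $\phi_n^m\le \phi_n^1\le \sum_{k=0}^{n-1}\phi_1^1\circ T^k$, and since $\phi_n^m$ decreases in $m$ we have $\phi_1^m\le\phi_1^1\in L^1(\mu)$; hence every $\phi_n^m$ lies in $L^1(\mu)$. Because $\phi_n^m\ge 0$ decreases in $m$, the pointwise limit $\phi_n^\infty:=\lim_{m\to\infty}\phi_n^m=\inf_m\phi_n^m\ge 0$ exists, and letting $m\to\infty$ in $\phi_{n+k}^m\le \phi_n^m+\phi_k^m\circ T^n$ shows that $\phi_n^\infty$ is again subadditive in $n$ w.r.t. $T$, with $\phi_1^\infty\le\phi_1^1\in L^1(\mu)$.

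Next I would apply Kingman's theorem separately to $\phi_n^\infty$ and to each $\phi_n^m$ (all non-negative, so the convergence is both $\mu$-a.e. and in $L^1(\mu)$), producing $T$-invariant limits $\psi^\infty:=\lim_n\frac1n\phi_n^\infty$ and $\psi^m:=\lim_n\frac1n\phi_n^m$. These are exactly the inner limits of the two iterated expressions. On the right-hand side the inner limit $\lim_m\frac1n\phi_n^m=\frac1n\phi_n^\infty$ holds a.e. and in $L^1(\mu)$ by dominated convergence (dominated by $\frac1n\phi_n^1$), so the right-hand iterated limit equals $\psi^\infty$. On the left-hand side the inner limit is $\psi^m$, which inherits monotone decrease in $m$ from $\phi_n^m$ and satisfies $0\le\psi^m\le\psi^1\in L^1(\mu)$; hence the outer limit $\psi_*:=\lim_m\psi^m=\inf_m\psi^m$ exists a.e. and in $L^1(\mu)$. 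It thus remains only to prove $\psi_*=\psi^\infty$ $\mu$-a.e.

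The pointwise inequality $\psi_*\ge\psi^\infty$ is immediate from $\phi_n^\infty\le\phi_n^m$. The reverse is the crux, and I would obtain it through integrals. By the variational part of Kingman's theorem, $\int\psi^m\,\dif\mu=\inf_{n\ge 1}\frac1n\int\phi_n^m\,\dif\mu$ and $\int\psi^\infty\,\dif\mu=\inf_{n\ge 1}\frac1n\int\phi_n^\infty\,\dif\mu$. Writing $c_n^m:=\frac1n\int\phi_n^m\,\dif\mu$, dominated convergence gives $\inf_m c_n^m=\lim_m c_n^m=\frac1n\int\phi_n^\infty\,\dif\mu$, while monotone convergence gives $\int\psi_*\,\dif\mu=\lim_m\int\psi^m\,\dif\mu=\inf_m\inf_n c_n^m$. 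Therefore both $\int\psi_*\,\dif\mu$ and $\int\psi^\infty\,\dif\mu$ equal the double infimum $\inf_{m,n}c_n^m$, and so they coincide. Combined with the pointwise bound $\psi_*\ge\psi^\infty\ge 0$ and the integrability of both functions, equality of the integrals forces $\psi_*=\psi^\infty$ $\mu$-a.e., which is the desired identity.

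The main obstacle is precisely this last step: the two iterated limits agree only after the order of $\inf_n$ and $\inf_m$ has been exchanged, and such an exchange is not valid in general for pointwise limits of functions. Routing through Kingman's integral formula converts the pointwise problem into the trivial scalar identity $\inf_n\inf_m c_n^m=\inf_m\inf_n c_n^m$, and the one easy pointwise inequality $\psi_*\ge\psi^\infty$ then allows me to upgrade the resulting equality of integrals to equality $\mu$-a.e.
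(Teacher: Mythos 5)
Your proof is correct and follows essentially the same route as the paper's: apply Kingman's theorem to each $\phi_n^m$ and to the monotone limit $\phi_n^\infty$, observe the easy pointwise inequality between the two iterated limits, and then close the gap by comparing integrals via Kingman's variational formula. The only cosmetic difference is that you phrase the final step as equality of both integrals with the double infimum $\inf_{m,n}\frac1n\int\phi_n^m\,\dif\mu$, whereas the paper reaches the same conclusion by a chain of inequalities in which $m\to\infty$ and then $n\to\infty$; these are the same argument.
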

\begin{proof}
Since $0\le \phi_1^m\le \phi_1^1\in L^1(\mu)$, by subadditive ergodic theorem, $\psi^m:=\lim\limits_{n\to\infty}\frac{1}{n}\phi_n^m$ converges $\mu$-a.e. and in $L^1(\mu)$; moreover, for each $m$ we have
\[
0\le \int\psi^m\dif\mu =\inf_n\frac{1}{n}\int\phi_n^m\dif\mu\le \int\phi_1^1\dif\mu<\infty\,.
\]
Noting that $0\le \psi^m\le \psi^1\in L^1(\mu)$ is decreasing in $m$, $\psi:=\lim\limits_{m\to\infty}\psi^m$ also converges $\mu$-a.e. and in $L^1(\mu)$. On the other hand, for the same reason, $\varphi_n:=\lim\limits_{m\to\infty}\phi_n^m$ converges $\mu$-a.e. and in $L^1(\mu)$. As pointwise limit of subaddtive sequences, $\varphi_n\ge 0$ is still subadditive and $\varphi_1\in L^1(\mu)$, so that $\Psi:=\lim\limits_{n\to\infty}\frac{1}{n}\varphi_n$  converges $\mu$-a.e. and in $L^1(\mu)$; moreover, $0\le \int\Psi\dif\mu =\inf\limits_n\frac{1}{n}\int \varphi_n\dif\mu<\infty$.

By definition, $\frac{1}{n}\varphi_n\le \frac{1}{n}\phi_n^m$  $\mu$-a.e. for any $m,n$. First letting $n\to\infty$  and then letting $m\to\infty$ yields that $\Psi\le\psi$ $\mu$-a.e.. To complete the proof, it suffices to show that $\int\Psi\dif\mu\ge \int\psi\dif\mu$. To this end, observe that
\[
\int \psi\dif\mu \le \int\psi^m\dif\mu \le \frac{1}{n}\int\phi_n^m\dif\mu\,,\quad\forall m,n\,.
\]
First letting $m\to\infty$ and then letting $n\to\infty$, the conclusion follows.
\end{proof}
As a direct corollary of Lemma~\ref{lem:mono subadd} together with subadditivity of $\log^+\mcll_n^r$, we have:

\begin{prop}\label{prop:Kifer}
Let $(X,T)$ be a TDS and let $\mu\in\mclm(X,T)$. Assume that $\log^+\mcll_1^{r_*}\in L^1(\mu)$ for some $r_*>0$. Then all the limits below converge in both $\mu$-a.e. and $L^1(\mu)$ sense:
\[
\chi(x):= \lim_{n\to\infty}\frac{1}{n}\log^+\mcll_n(x),\quad \Lambda^r(x):=\lim_{n\to\infty}\frac{1}{n}\log^+\mcll_n^r(x),~ r\in(0,r_*] \,.
\]
Moreover, for any sequence $r_m\searrow 0$, the following equality also holds $\mu$-a.e. and in $L^1(\mu)$:
\[
\chi(x)=\lim_{m\to\infty}\Lambda^{r_m}(x)\,.
\]
\end{prop}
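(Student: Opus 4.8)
The plan is to deduce everything from Lemma~\ref{lem:mono subadd} by an appropriate choice of the two parameters, using the subadditivity of $\log^+\mcll_n^r$ in $n$ (Kifer's observation) together with its monotonicity in $r$. First I would record two monotonicity facts. Since enlarging the ball can only enlarge the supremum defining $\mcll_n^r$, the quantity $\mcll_n^r$ is nondecreasing in $r$; and by the remark preceding the proposition, $\mcll_n^r\searrow\mcll_n$ as $r\searrow0$. Consequently $\log^+\mcll_n^r$ is nondecreasing in $r$ and decreases to $\log^+\mcll_n$ as $r\searrow0$, the last step using continuity of $\log^+$ on $[0,+\infty]$.

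Next I would dispose of the well-definedness claims. Fix $r\in(0,r_*]$. Because $\mcll_1^r\le\mcll_1^{r_*}$ we have $\log^+\mcll_1^r\le\log^+\mcll_1^{r_*}\in L^1(\mu)$, and $(\log^+\mcll_n^r)_n$ is nonnegative and subadditive in $n$ w.r.t.\ $T$; hence Kingman's subadditive ergodic theorem (Theorem~\ref{thm:Kingman}) gives convergence of $\Lambda^r=\lim_n\frac1n\log^+\mcll_n^r$ both $\mu$-a.e.\ and in $L^1(\mu)$. The same argument applies to $\chi=\lim_n\frac1n\log^+\mcll_n$, once one notes that $\log^+\mcll_n$ is itself subadditive in $n$ (being the pointwise decreasing limit of the subadditive sequences $\log^+\mcll_n^r$) and that $\log^+\mcll_1\le\log^+\mcll_1^{r_*}\in L^1(\mu)$.

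For the final equality I would apply Lemma~\ref{lem:mono subadd} directly. Given a sequence $r_m\searrow0$, after discarding finitely many initial terms (which changes neither the definedness of the $\Lambda^{r_m}$ nor the value of the limit in $m$) I may assume $r_1\le r_*$. Set $\phi_n^m:=\log^+\mcll_n^{r_m}$. Then $\phi_1^1=\log^+\mcll_1^{r_1}\in L^1(\mu)$; for each $m$ the sequence $\phi_n^m$ is subadditive in $n$ w.r.t.\ $T$; and for each $n$ the sequence $\phi_n^m$ is decreasing in $m$ because $r_m$ decreases. Thus the hypotheses of Lemma~\ref{lem:mono subadd} hold, and its conclusion asserts that the two iterated limits coincide, $\mu$-a.e.\ and in $L^1(\mu)$. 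It remains only to identify them: the inner limit $\lim_n\frac1n\phi_n^m$ equals $\Lambda^{r_m}$, so the first iterated limit is $\lim_m\Lambda^{r_m}$; whereas $\lim_m\phi_n^m=\log^+\mcll_n$ by the first paragraph, so $\lim_n\frac1n(\lim_m\phi_n^m)=\chi$, giving the second iterated limit. Hence $\chi=\lim_m\Lambda^{r_m}$ both $\mu$-a.e.\ and in $L^1(\mu)$.

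The one point requiring a little care---the main, though modest, obstacle---is the interchange $\lim_m\log^+\mcll_n^{r_m}=\log^+\mcll_n$, which rests on the monotone convergence $\mcll_n^r\searrow\mcll_n$ (already noted before the proposition) and the continuity of $\log^+$; everything else is a mechanical verification of the hypotheses of Lemma~\ref{lem:mono subadd}. I would also emphasize that the convergence statements for the individual $\Lambda^r$ and for $\chi$ require only Kingman's theorem, so the lemma is invoked solely to justify exchanging the order of the $r\searrow0$ and $n\to\infty$ limits.
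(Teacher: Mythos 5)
Your proposal is correct and is essentially the paper's own proof: the paper likewise disposes of the whole proposition by applying Lemma~\ref{lem:mono subadd} to the family $\phi_n^m:=\log^+\mcll_n^{r_m}$ for a sequence $r_m\searrow 0$ with $r_1\le r_*$, and your verification of the hypotheses (monotonicity in $r$, Kifer's subadditivity in $n$, $\phi_1^1\in L^1(\mu)$) together with the identification of the two iterated limits as $\lim_m\Lambda^{r_m}$ and $\chi$ is exactly what the paper compresses into ``all the assertions follow.'' Your separate invocations of Kingman's theorem for the individual limits $\Lambda^r$ and $\chi$ are a harmless unwinding of what the lemma's proof already contains, and your handling of the case $r_1>r_*$ by discarding finitely many terms is a minor point the paper sidesteps by assuming $r_1\le r_*$ outright.
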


\begin{proof}
Given an arbitrary sequence $r_m\searrow 0$ starting with $r_1\le r_*$, the family $\phi_n^m:=\log^+\mcll_n^{r_m}$ satisfy all the assumptions in Lemma~\ref{lem:mono subadd}. All the assertions follow.
\end{proof}
\begin{rmk}
  We call $\chi(x)$ the {\it maximal Lyapunov exponent} at $x$ provided the limit exists. The proposition guarantees that under assumption \eqref{eq:Lip cond}, $\chi_\mu(T)$ appearing in Theorem~\ref{thm:main TDS} satisfies $\chi_\mu(T)=\int_X\chi(x)\dif\mu<\infty$; in particular, in the ergodic case, $\chi(x)=\chi_\mu(T)$ for $\mu$-a.e. $x\in X$. Besides, the proposition says that $\chi(x)$ coincides with the notion of characteristic maximal exponent introduced by Kifer \cite{Kif}.
\end{rmk}

\subsection{Proof of Theorem~\ref{thm:main TDS}}\label{subse:main thm proof} Let us begin with another simple observation:

\begin{lem}\label{lem:ball}
Following the settings and notations in Proposition~\ref{prop:Kifer}, the statement below holds for $\mu$-a.e. $x\in X$ with $\chi(x)<\infty$.
Given $\lambda>\chi(x)$ and $\epsilon>0$, there exist $\eta>0$ and $N\ge 1$ such that $B(x,\frac{\eta}{e^{n\lambda}})\subset B_n(x,\epsilon)$ for any $n\ge N$.
\end{lem}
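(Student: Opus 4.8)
The plan is to run everything through the Bowen-ball Lipschitz constants $\mcll_k^r$ rather than the pointwise constants $\mcll_k$, because the former are tailored to exactly the orbit-confinement we must control. Fix a point $x$ in the full-measure set on which all the convergences of Proposition~\ref{prop:Kifer} hold (for some fixed sequence $r_m\searrow 0$ with $r_1\le r_*$), and suppose $\chi(x)<\infty$, $\lambda>\chi(x)$. Since $\Lambda^{r_m}(x)\searrow\chi(x)<\lambda$, I would first choose one radius $r:=r_m$ with $\Lambda^{r}(x)<\lambda$ and an auxiliary rate $\lambda'$ with $\Lambda^{r}(x)<\lambda'<\lambda$. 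Shrinking $\epsilon$ if necessary, I may assume $\epsilon\le r$, since proving the containment for a smaller radius $\epsilon$ yields it for the original one by monotonicity of Bowen balls.

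The quantitative input is that $\tfrac1k\log^+\mcll_k^{r}(x)\to\Lambda^{r}(x)<\lambda'$, so there is $K\ge 1$ with $\mcll_k^{r}(x)<e^{k\lambda'}$ for all $k\ge K$. Unwinding the definition of $\mcll_k^r$, this says precisely that
\[
d(T^k x,T^k y)\le \mcll_k^{r}(x)\,d(x,y)< e^{k\lambda'}\,d(x,y)\qtext{whenever} y\in B_k(x,r)\setminus\{x\},\ k\ge K.
\]
The whole point of using $\mcll_k^r$ is that this estimate carries the built-in hypothesis $y\in B_k(x,r)$, i.e. that the orbit segment $y,Ty,\dots,T^{k-1}y$ has already stayed within $r$ of that of $x$; this is exactly what lets me dispense with a single expansion radius valid for all $k$ simultaneously.

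With $r,\lambda',K$ fixed I would then take $\eta\in(0,\epsilon e^{\lambda})$ and, using uniform continuity of the finitely many maps $T^0,\dots,T^{K-1}$ on the compact space $X$, choose $N$ so large that $d(x,y)<\eta e^{-N\lambda}$ already forces $d(T^k x,T^k y)<\epsilon$ for all $0\le k\le K-1$. For $n\ge N$ and $y\in B(x,\eta e^{-n\lambda})$ I then prove $d(T^k x,T^k y)<\epsilon$ for every $0\le k\le n-1$ by induction on $k$: the cases $k<K$ are the continuity choice, and for $K\le k\le n-1$ the inductive hypothesis gives $y\in B_k(x,\epsilon)\subset B_k(x,r)$, so the displayed bound applies and
\[
d(T^k x,T^k y)< e^{k\lambda'}\,d(x,y)< e^{k\lambda'}\eta\,e^{-n\lambda}\le \eta\,e^{-\lambda}e^{k(\lambda'-\lambda)}\le \eta e^{-\lambda}<\epsilon,
\]
where the third step uses $n\ge k+1$ and the fourth uses $\lambda'<\lambda$. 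Hence $y\in B_n(x,\epsilon)$, i.e. $B(x,\eta e^{-n\lambda})\subset B_n(x,\epsilon)$ for all $n\ge N$, as required.

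The main obstacle is the one flagged above: the pointwise constants $\mcll_k(x)$ only bound expansion on some neighborhood $B(x,r_k)$ whose radius $r_k$ may degenerate as $k\to\infty$, so a naive term-by-term estimate would demand a uniform radius that need not exist. Replacing $\mcll_k$ by $\mcll_k^r$ for a single $r$ with $\Lambda^r(x)<\lambda$ dissolves this, \emph{provided} the induction is arranged so that the orbit of $y$ is guaranteed to stay inside $B_k(x,r)$ at each stage — which is precisely why the inductive hypothesis must be phrased as membership in the Bowen ball $B_k(x,\epsilon)$ with $\epsilon\le r$, and why passing to $\Lambda^r$ (the Bowen-ball exponent of Kifer) rather than to $\chi$ alone is essential.
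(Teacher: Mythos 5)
Your proof is correct and takes essentially the same route as the paper's: both replace the pointwise constants by Kifer's Bowen-ball constants $\mcll_k^r$ with $\Lambda^r(x)<\lambda'<\lambda$, handle the initial orbit segment by continuity, and run an induction arranged precisely so that the orbit of $y$ is guaranteed to stay inside the Bowen ball of radius $r$ when the expansion estimate is invoked. The only difference is bookkeeping: the paper chooses $r\le\epsilon$ and inducts on $n$ to establish $B(x,\eta e^{-\lambda n})\subset B_n(x,r)$ for all $n\ge N$, whereas you shrink $\epsilon\le r$, fix $n$, and induct on $k$ along the orbit --- a cosmetic variation of the same argument.
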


\begin{proof}
According to Proposition~\ref{prop:Kifer}, the following holds for $\mu$-a.e. $x\in X$:
$$\chi(x)=\lim_{m\to\infty}\Lambda^{r_m}(x)<\infty;$$
so given $\lambda>\chi(x)$ and $\epsilon>0$,
there exists $r\in (0,\epsilon]$ such that $\Lambda^r(x)<\lambda-c$, where $c:=\frac{1}{2}(\lambda-\chi(x))>0$.  It suffices to prove the statement for such an $x$ and for $r$ instead of $\epsilon$. Let $N_0\ge 1$ with $e^{-c N_0}<r$.
By definition of $\Lambda^r(x)$, there exists $N\ge N_0$ such that $\mcll_n^r(x)<e^{(\lambda-c)n}$ when $n\ge N$.  By continuity, we can find $\eta\in (0,1)$ such that $B(x,\frac{\eta}{e^{\lambda N}})\subset B_N(x,r)$, so the conclusion holds for $n=N$. By induction, we may suppose that $B(x,\frac{\eta}{e^{\lambda n}})\subset B_n(x,r)$ for some $n\ge N$. It follows that
\[
y\in B(x,\frac{\eta}{e^{\lambda (n+1)}}) \subset B(x,\frac{\eta}{e^{\lambda n}}) \subset B_n(x,r) \implies d(T^n x, T^n y)\le \mcll_n^r(x)\cdot d(x,y)< e^{(\lambda-c)n}\cdot \frac{\eta}{e^{\lambda n}} < e^{-c N_0}<r \,.
\]
Thus we obtain that $B(x,\frac{\eta}{e^{\lambda (n+1)}}) \subset B_{n+1}(x,r)$, which completes the induction.
\end{proof}

Now we are ready to prove Theorem~\ref{thm:main TDS}.  $\chi_\mu(T)$ has been shown well-defined and finite-valued by Proposition~\ref{prop:Kifer}. For the rest, fixing $\delta>0$, let $\xi$ be the measurable partition subordinate to $\delta$-local unstable sets asserted in Theorem~\ref{thm:local}~(1). Let $\{\mu_x\in \mclm(X): x\in X'\}$ be the disintegration of $\mu$ over $\xi$ as stated in Proposition~\ref{prop:disintegration}. Firstly, observe that for each $x\in X'$, the {\it lower local dimension} of $\mu_x$ below satisfies that
\begin{equation}\label{eq:local dim}
\underline{d}_{\mu_x}(y):=\liminf_{r\searrow 0}\frac{\log \mu_x \big(B(y,r)\big)}{\log r} = \liminf_{n\to \infty}\frac{-\log \mu_x \big( B(y,e^{-n\lambda}) \big)}{ n\lambda},\quad \forall y\in X,~\lambda>0\,.
\end{equation}
Secondly, note that  $\chi(y)=\chi_\mu(T)$ holds for $\mu$-a.e. $y\in X$ because  $\mu$ is assumed to be ergodic. Then Lemma~\ref{lem:ball} implies that
there exists $Y\subset X$ of full measure with the following properties: for each $y\in Y$, $\chi(y)=\chi_\mu(T)$; moreover, given $\lambda>\chi_\mu(T)$ and $\epsilon>0$, there exists $\eta>0$ such that
\[
\liminf_{n\to \infty}\frac{-\log \mu_x \big( B(y,\frac{\eta}{e^{n\lambda}}) \big)}{ n} \ge \liminf_{n\to \infty}\frac{-\log \mu_x \big( B_n(y,\epsilon,T)\big)}{n},\quad \forall x\in X',y\in Y\,.
\]
Since the left hand side of the above inequality is independent of $\eta$, taking $\eta=1$ and letting $\epsilon\to 0^+$ yields that
\begin{equation}\label{eq:lower local entropy}
  \liminf_{n\to \infty}\frac{-\log \mu_x(B(y,e^{-n\lambda}))}{ n} \ge \lim_{\epsilon\to 0^+}\liminf_{n\to \infty}\frac{-\log \mu_x(B_n(y,\epsilon,T))}{ n} =\underline{h}_{\mu_x}(T,y),\quad \forall x\in X',y\in Y\,.
\end{equation}
Thirdly, combining \eqref{eq:local dim} and \eqref{eq:lower local entropy} with Theorem~\ref{thm:local}~(2), we conclude that for any $\lambda>\chi_\mu(T)$, the following holds for $\mu$-a.e. $x\in X$:

\[\underline{d}_{\mu_x}(y)\ge \frac{\underline{h}_{\mu_x}(T,y)}{\lambda}  = \frac{h_\mu(T)}{\lambda},\quad \mu_x\text{-a.e.}~y\in\xi(x)\cap Y\,.\]
Since $\mu_x( \xi(x)\cap Y)=1$ for $\mu$-a.e. $x\in X$, from the inequality above and the {\it mass distribution principle}(see, for example, \cite{Pe97,PrU}), we obtain that
\[
\dim_H( W_\delta^u(x))\ge \dim_H( \xi(x)\cap Y) \ge \frac{h_\mu(T)}{\lambda}\,,\quad \mu\text{-a.e.}~x\in X\,.
\]
Since $\lambda>\chi_\mu(T)$ is arbitrary, the proof of Theorem~\ref{thm:main TDS} is completed.

\subsection{On infinitely dimensional $C^1$ systems}\label{subse:C1 infinite}

As $M$ is of finite dimension is inessential in our proof of Theorem~\ref{thm:main C1 finite} for $\delta$-unstable set, an analogous conclusion in infinite dimensional case also holds for $\delta$-unstable set. More precisely, let $B$ be a Banach space, let $U\subset B$ be an open subset and let $f:U\to B$ be a $C^1$ map. Suppose that  $X\subset U$ is compact and $f$ maps $X$ homeomorphically onto itself. Then for $T:=f|_X$, $(X,T)$ is a TDS, where the metric $d$ on $X$ is induced by the norm $\|\cdot\|$ on $B$. Given $x\in X$ and $r>0$, denote open ball centered at $x$ of radius $r$ in $B$(respectively $X$) by $O(x,r)=\{y\in B:\|x-y\|<r\}$(respectively $B(x,r)=O(x,r)\cap X$). When $O(x,r)\subset U$, by mean-value theorem and convexity of $O(x,r)$,
\[
\mcll_1^r(x)=\sup_{y\in B(x,r)\setminus\{x\}}\frac{d(Tx,Ty)}{d(x,y)}\le \sup_{y\in O(x,r)}\|Df(y)\| \,.
\]
This fact together with continuity of $\|Df\|$ on $U$ and compactness of $X$ implies that $\mcll_1^r$ is bounded from above on $X$ for some $r>0$, so
the integrability condition \eqref{eq:Lip cond} is automatically satisfied for this $(X,T)$. Therefore, we have:

\begin{thm}\label{thm:C1 infinite}
Let $B$, $f$, $X$ and $T$ be as above and let $\mu\in\mclm^e(X,T)$ be of positive entropy. Then the following holds for every $\delta>0$:

\[
\dim_H(W_\delta^u(x,T))\ge\frac{h_\mu(T)}{\chi_\mu(T)} \,,\quad \mu\text{-a.e.}~x\in X \,.
\]
\end{thm}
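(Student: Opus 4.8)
The plan is to recognize Theorem~\ref{thm:C1 infinite} as an essentially immediate consequence of Theorem~\ref{thm:main TDS}: the only thing requiring genuine argument is that the abstract integrability hypothesis \eqref{eq:Lip cond} holds automatically in the present $C^1$ setting. Once that is in place, the maximal Lyapunov exponent $\chi_\mu(T)$ is well-defined and finite (by Theorem~\ref{thm:main TDS} itself, or Proposition~\ref{prop:Kifer}), and the estimate \eqref{eq:unstable dim lower} is exactly the asserted lower bound. So I would first record the trivial structural point that $(X,T)$ really is a TDS: $X$ is a compact metric space under the metric $d$ induced by $\|\cdot\|$, and $T=f|_X$ is by hypothesis a homeomorphism of $X$ onto itself.

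The substantive step is verifying \eqref{eq:Lip cond}, i.e. that $\int_X \log^+\mcll_1^r\,\dif\mu<\infty$ for some $r>0$, where $\mcll_1^r(x)=\sup_{y\in B(x,r)\setminus\{x\}} d(Tx,Ty)/d(x,y)$. The key local estimate is the one displayed just before the theorem: whenever $O(x,r)\subset U$, the mean value theorem applied along the segment joining $x$ to $y$ (which lies in $O(x,r)$ by convexity) yields $\mcll_1^r(x)\le \sup_{z\in O(x,r)}\|Df(z)\|$. To upgrade this to a uniform bound I would, for each $x\in X$, use continuity of $Df$ at $x$ to choose $r_x>0$ with $O(x,r_x)\subset U$ and $\sup_{z\in O(x,r_x)}\|Df(z)\|\le \|Df(x)\|+1$, cover $X$ by the balls $O(x,r_x/2)$, and extract a finite subcover $O(x_1,r_{x_1}/2),\dots,O(x_k,r_{x_k}/2)$ by compactness of $X$. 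Setting $r:=\tfrac12\min_i r_{x_i}$, a one-line triangle-inequality check shows that if $x\in O(x_i,r_{x_i}/2)$ then $O(x,r)\subset O(x_i,r_{x_i})\subset U$, whence $\mcll_1^r(x)\le \max_i(\|Df(x_i)\|+1)=:C<\infty$ for every $x\in X$. Thus $\log^+\mcll_1^r$ is bounded and Borel on $X$, hence $\mu$-integrable, and \eqref{eq:Lip cond} holds.

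The one point deserving care — and the only genuine, if mild, obstacle — is precisely this uniform bound. In an infinite-dimensional $B$ the closed tubular neighborhood of $X$ is bounded but need not be compact, so one cannot simply maximize $\|Df\|$ over a compact neighborhood of $X$ as one would in finite dimensions; the finite-subcover argument above circumvents this by invoking compactness of $X$ alone. With \eqref{eq:Lip cond} established, I would conclude by applying Theorem~\ref{thm:main TDS} directly to $(X,T,\mu)$: its hypotheses (ergodicity of $\mu$, $h_\mu(T)>0$, and \eqref{eq:Lip cond}) are all satisfied, and its conclusion \eqref{eq:unstable dim lower} reads $\dim_H\big(W_\delta^u(x,T)\big)\ge h_\mu(T)/\chi_\mu(T)$ for $\mu$-a.e.\ $x\in X$, valid for every $\delta>0$, which is exactly the desired statement.
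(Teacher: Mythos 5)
Your proof is correct and follows essentially the same route as the paper: both verify the integrability hypothesis \eqref{eq:Lip cond} via the mean-value estimate $\mcll_1^r(x)\le\sup_{z\in O(x,r)}\|Df(z)\|$ on convex balls and then apply Theorem~\ref{thm:main TDS} directly. The only difference is that you spell out the finite-subcover argument (needed precisely because in infinite dimensions a bounded neighborhood of $X$ need not be compact), a detail the paper compresses into the single assertion that continuity of $\|Df\|$ on $U$ and compactness of $X$ make $\mcll_1^r$ bounded on $X$ for some $r>0$.
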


\begin{rmk}\mbox{}
\begin{enumerate}
  \item Evidently, for either finite dimensional or infinite dimensional case, once $f^{-1}$ exists and is $C^1$-map  on an open neighborhood of $X$, similar lower bound estimate for Hausdorff dimension of local stable sets holds.
  \item It might be mentioned that for infinite dimensional systems, it could happen that $h_\mu(T)>0$  while $\chi_\mu(T)=0$. The following is such an example.
\end{enumerate}
\end{rmk}

\begin{exa}
  There exist a Hilbert space $\mbbh$, a bounded linear operator $T$ on $\mbbh$ of spectrum radius $1$, a compact $T$-invariant $X\subset\mbbh$ and $\mu\in\mclm^e(X,T)$ with positive entropy.
\end{exa}

\begin{proof}

 Given  a sequence $\mathbf{a}=(a_k)_{k\ge 0}$ of positive reals, denote
\[
\mbbh_{\mathbf{a}}:=\big\{(x_n)_{n\in \mbbz}\in\mbbr^{\mbbz} \mid \sum_{n\in \mbbz}a_{|n|}|x_n|^2<+\infty\big\}\,.
\]
Then $\mbbh_{\mathbf{a}}$ is a Hilbert space over $\mbbr$ with respect to the inner product below:
\[
\langle x, y\rangle_{\mathbf{a}}:=\sum_{n\in \mbbz} a_{|n|} x_ny_n \qtext{for} x=(x_n)_{n\in \mbbz}\,,~y=(y_n)_{n\in \mbbz}\in \mbbh_{\mathbf{a}}\,.
\]
Let $\|\cdot\|_{\mathbf{a}}$ denote the norm on $\mbbh_{\mathbf{a}}$ induced by its inner product.
Let us say that the sequence $\mathbf{a}=(a_k)_{k\ge 0}$ above {\it sub-exponentially decreases to $0$}, if it satisfies the following two properties:
\begin{itemize}
  \item $a_k$ is strictly decreasing to $0$ as $k\to\infty$;
  \item there exist $C>0$ and a sequence $\mathbf{b}=(b_k)_{k\ge 0}$  of positive reals with $\lim\limits_{k\to\infty}\frac{1}{k}|\log b_k|=0$ such that
  \[
  \frac{a_k}{a_l}\le C b_{|k-l|},\quad \forall~k,l\ge 0\,.
  \]
\end{itemize}

From now on suppose that $\mathbf{a}=(a_k)_{k\ge 0}$ sub-exponentially decreases to $0$. We claim that the left-shift map $T$ acting on $\mbbh_{\mathbf{a}}$ is a bounded linear operator with spectrum radius $1$. The linearity of $T$ and the fact that $\|T^k\|_{\mathbf{a}}>1$ for each $k\ge 1$ are evident. On the other hand, for $x=(x_n)_{n\in \mbbz}\in \mbbh_{\mathbf{a}}$,
\[
\|T^k x\|_{\mathbf{a}}^2 =\sum_{n\in\mbbz}a_{|n-k|}|x_n|^2 \le C\cdot \max\{b_0,\cdots, b_k\}\cdot\|x\|_{\mathbf{a}}^2\,,
\]
which implies that the spectrum radius of $T$ is bounded by $1$ from above.

Now further assume that $\sum_{k=0}^\infty a_k<\infty$; for example, let $a_k=\frac{1}{k^2+1}$. Then $X=\{0,1\}^\mbbz$ can be identified as a subset of $\mbbh_{\mathbf{a}}$ in the natural way. The norm on $\mbbh_{\mathbf{a}}$ induces a metric on $X$ that makes $X$ a compact metric space, which is compatible with the standard product topology on $X$. Let $\mu=(\frac{1}{2}\delta_0+\frac{1}{2}\delta_1)^{\mathbb{Z}}$. Then $\mu\in\mclm^e(X,T)$ with $h_\mu(T)=\log2$.
Since the spectrum radius of $T:\mbbh_{\mathbf{a}}\to \mbbh_{\mathbf{a}}$ is $1$, the maximal Lyapunov exponent $\chi_\mu(T)$ of $T:X\to X$ is $0$.
\end{proof}

\appendix

\section{Proof of Lemma~\ref{lem:local entropy lower bound}}

Following the argument of Brin and Katok \cite{BK}, it suffices to prove the technical statement below, where we denote $c=H_\mu(\alpha|\mscf)$.

\begin{clm}
For any $\epsilon>0$ sufficiently small, there exist $\tau>0$ and a measurable subset $I$ of $X$ with $\mu(I)>1-\epsilon^{\frac{1}{4}}$ for which the following statement holds. For any $x\in I$, we can find a measurable subset $D$ of $X$ such that
\begin{equation}\label{eq:claim}
 \mu_{x}(D)>1-4\epsilon^{\frac{1}{4}}\,, \qtext{and} \liminf_{n\to+\infty}\frac{-\log\mu_{x}\big(B_n(y,\tau,T)\big)}{n}\ge c-3(\Delta+\epsilon)\,,\ \forall y\in D\,,
\end{equation}
where
\begin{equation}\label{eq:Stirling const}
  \Delta:=2\sqrt{\epsilon}\log (\#\alpha-1)-2\sqrt{\epsilon}\log2\sqrt{\epsilon}-(1-2\sqrt{\epsilon})\log(1-2\sqrt{\epsilon})\,.
\end{equation}
\end{clm}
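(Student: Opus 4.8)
The plan is to adapt the covering argument used by Brin and Katok \cite{BK} for their lower bound to the conditional measures $\mu_x$, replacing the usual Shannon--McMillan--Breiman theorem by its local version, Corollary~\ref{cor:SMB local}. Throughout write $c=H_\mu(\alpha|\mscf)$ and let $\#\alpha$ be the number of atoms of $\alpha$. For $\tau>0$ put $U_\tau:=\{z\in X: B(z,\tau)\not\subset\alpha(z)\}$, so that any point within distance $\tau$ of some $z\notin U_\tau$ lies in $\alpha(z)$; since $\bigcap_{\tau>0}U_\tau\subset\partial\alpha$ and $\mu(\partial\alpha)=0$, I first fix $\tau>0$ with $\mu(U_\tau)<\sqrt\epsilon$. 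Set $p:=2\sqrt\epsilon$ and, for $y\in X$ and $n\ge1$, let $R_n(y):=\{0\le k<n: T^ky\in U_\tau\}$ record the ``bad'' times.

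The geometric core is an elementary covering estimate. If $z\in B_n(y,\tau,T)$ and $T^ky\notin U_\tau$, then $d(T^ky,T^kz)<\tau$ forces $T^kz\in\alpha(T^ky)$; hence the word of $\alpha_0^{n-1}(z)$ can differ from that of $\alpha_0^{n-1}(y)$ only at coordinates $k\in R_n(y)$. Consequently, whenever $|R_n(y)|\le pn$, the Bowen ball $B_n(y,\tau,T)$ is covered by those atoms of $\alpha_0^{n-1}$ whose words lie within Hamming distance $pn$ of that of $y$, and the number of such atoms is at most $\sum_{i\le pn}\binom{n}{i}(\#\alpha-1)^i$. A Stirling estimate bounds this by $e^{(\Delta+\epsilon)n}$ for all large $n$, with $\Delta$ exactly the constant in \eqref{eq:Stirling const}; this is where $\Delta$ enters.

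Next I would bring in the measure. The function $y\mapsto\mu_y(\alpha_0^{N-1}(y))$ is well defined $\mu$-a.e.\ (taking $\mu_y:=\mu_x$ for $y\in\xi(x)$, legitimate by Proposition~\ref{prop:disintegration}(2)), and by Corollary~\ref{cor:SMB local} one has $\tfrac{-1}{N}\log\mu_y(\alpha_0^{N-1}(y))\to c$ for $\mu$-a.e.\ $y$; by Birkhoff's ergodic theorem $\tfrac1n|R_n(y)|\to\mu(U_\tau)<\sqrt\epsilon$ for $\mu$-a.e.\ $y$. Egorov's theorem makes both limits uniform off a set of $\mu$-measure $O(\sqrt\epsilon)$, yielding one integer $N_0$ and a set $G$ such that for $y\in G$ and $n\ge N_0$ one has simultaneously $|R_n(y)|\le pn$ and $\mu_y(\alpha_0^{n-1}(y))\le e^{-(c-\epsilon)n}$. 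Fixing $x$, I split the atoms covering $B_n(y,\tau,T)$ into those obeying the bound $\mu_x(\,\cdot\,)\le e^{-(c-\epsilon)n}$ and the exceptional ones. For $y\in G$ the first group contributes at most $e^{(\Delta+\epsilon)n}\cdot e^{-(c-\epsilon)n}$ pointwise; the exceptional atoms all lie in $G^c$, and integrating over the fibre and counting shows $\int_G\mu_x\big(B_n(y,\tau,T)\cap G^c\big)\,d\mu_x(y)\le e^{(\Delta+\epsilon)n}e^{-(c-\epsilon)n}\mu_x(G^c)$, because each exceptional atom is charged by at most $e^{(\Delta+\epsilon)n}$ reference atoms of $\mu_x$-measure $\le e^{-(c-\epsilon)n}$. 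Hence $\int_G\mu_x(B_n(y,\tau,T))\,d\mu_x(y)\le 2e^{-(c-\Delta-2\epsilon)n}$, and Chebyshev's inequality followed by the Borel--Cantelli lemma gives, for $\mu_x$-a.e.\ $y\in G$, the bound $\mu_x(B_n(y,\tau,T))\le e^{-(c-3(\Delta+\epsilon))n}$ for all large $n$, which is the asserted liminf estimate (the crude factor $3$ absorbs all accumulated errors).

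It remains to turn these $\mu$-almost-everywhere statements into the quantitative sets $I$ and $D$. Here I would apply Markov's inequality to the disintegration: each of the finitely many global exceptional sets above has $\mu$-measure $O(\sqrt\epsilon)$, so $\int\mu_x(\,\cdot\,)\,d\mu(x)=O(\sqrt\epsilon)$ and, with threshold $\epsilon^{1/4}$, its conditional measure exceeds $\epsilon^{1/4}$ only for $x$ in a set of measure $O(\epsilon^{1/4})$. Intersecting these conditions produces $I$ with $\mu(I)>1-\epsilon^{1/4}$ and, for each $x\in I$, a fibre set $D$ with $\mu_x(D)>1-4\epsilon^{1/4}$ on which the liminf bound holds. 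I expect the main obstacle to be the estimate for the SMB-exceptional atoms inside the Bowen ball: the naive pointwise bound is useless there, and one must instead integrate over the fibre and play the possibly large conditional mass of those atoms against the sub-exponential counting factor $e^{(\Delta+\epsilon)n}$; keeping every error uniform in $x$ so that it can be absorbed into $3(\Delta+\epsilon)$ is the accompanying technical nuisance.
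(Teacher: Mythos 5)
Your proposal is correct, and it follows the same Brin--Katok skeleton as the paper's proof: the boundary set $V_\tau(\alpha)$ (your $U_\tau$) with $\mu(\partial\alpha)=0$, the covering of $B_n(y,\tau,T)$ by $\alpha_0^{n-1}$-atoms within Hamming distance $2\sqrt{\epsilon}\,n$ of $\alpha_0^{n-1}(y)$, the Stirling bound $e^{(\Delta+\epsilon)n}$ on the number of such atoms, Corollary~\ref{cor:SMB local} as the source of the rate $c$, and a Markov-type inequality over the disintegration to manufacture $I$. The genuine difference is how the atoms violating the SMB bound are neutralized. The paper does this deterministically: it introduces the heavy atoms $\mclf_n=\{V:\mu_x(V)>e^{(-c+2(\Delta+\epsilon))n}\}$, shows that the set $E_n$ of points of $E$ whose word is Hamming-close to $\mclf_n$ is covered by at most $\#\mclf_n\cdot e^{(\Delta+\epsilon)n}$ atoms, each of $\mu_x$-measure at most $e^{(-c+\epsilon)n}$ (light, because they meet $E$), hence $\mu_x(E_n)\le e^{-\Delta n}$; removing $\bigcup_{n\ge\ell}E_n$ yields a set $D$ on which \emph{every} Hamming-close atom is light, so the Bowen-ball bound holds for all $y\in D$ and all $n\ge\ell$ with a single $\ell$. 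You instead keep the heavy atoms, observe that modulo $\mu_x$-null sets they sit inside $G^c$, bound $\int_G\mu_x\big(B_n(y,\tau,T)\big)\,d\mu_x(y)\le 2e^{-(c-\Delta-2\epsilon)n}$ by symmetric double counting over Hamming-close pairs of atoms, and conclude with Chebyshev plus Borel--Cantelli, so your exponential bound holds only for $n$ beyond a $y$-dependent threshold. Both deliver the liminf the Claim asks for. Your route is leaner (no $\mclf_n$, $\mclg_n$, $E_n$) and cleanly decouples the two roles of the good set: the integral estimate and Borel--Cantelli work for a.e.\ $x$ with no smallness assumption on $\mu_x(G^c)$, the set $I$ being needed only for the quantitative bound $\mu_x(D)>1-4\epsilon^{\frac{1}{4}}$. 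The paper's route buys uniformity of $\ell$ in $y$ (irrelevant for a liminf, but a strictly stronger conclusion) and avoids needing measurability of $y\mapsto\mu_x\big(B_n(y,\tau,T)\big)$, which your Chebyshev step does require (it holds, by lower semicontinuity of that map). Two bits of bookkeeping to tighten: in the double counting you must justify that any atom $W$ with $\mu_x(W\cap G)>0$ meets $G\cap\xi(x)\cap X'$ and hence obeys the SMB bound for $\mu_x$ itself --- this is exactly where Proposition~\ref{prop:disintegration}(2) and the identity $\mu_{y'}=\mu_x$ on the fibre enter; and your constants as written give $\mu(I^c)=O(\epsilon^{\frac{1}{4}})$ rather than $<\epsilon^{\frac{1}{4}}$, so take the Egorov exceptional sets of measure $<\epsilon$ (or Markov threshold $3\epsilon^{\frac{1}{4}}$) to land exactly inside the Claim's constants.
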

Since the proof is a little bit long, let us provide an outline here briefly. Firstly, we construct the set $I$ explicitly. Secondly,
given $x\in I$, the associated set $D$ will be taken with the form $E\setminus \cup_{n=\ell}^\infty E_n$; to define $E_n$, we need to introduce a pseudo-metric $\rho_n$ on $\alpha_n$ in advance. Finally we show that $D$ satisfies the desired properties.

\begin{proof}[Proof of Claim]
Let $\epsilon>0$ be a small number whose upper bound will be specified in \eqref{eq:Striling upper}. We may suppose $\# \alpha \ge 2$ because otherwise there is nothing to prove.

For $r>0$ and $x\in X$, let $B(x,r)=\{y\in X:d(x,y)<r\}$ and
\[
V_r(\alpha)=\{x\in X: B(x,r)\nsubseteq \alpha(x)\}.
\]
Since $\bigcap\limits_{r>0}V_r(\alpha)=\partial \alpha$, we have $\lim_{r \searrow 0}\mu(V_r(\alpha))=\mu(\partial\alpha)=0$. Then we can choose $\tau>0$ such that $\mu(V_r(\alpha))<\epsilon$ for any $0<r\le\tau$.

For any subset $F$ of $X$, let $1_{F}$ be its indicator function. For each $n\ge 1$ denote

$$A_n:=\{y\in X:\frac{1}{k}\sum_{i=0}^{k-1}1_{V_\tau(\alpha)}(T^{i}y) < 2\sqrt{\epsilon} ~\text{ for any }~ k\ge n\}.$$
Since $\mu(V_\tau(\alpha))<\epsilon$, there exists $\ell_0\ge 1$ such that
\begin{align} \label{eq:An measure lower}
\mu(A_n)> 1-2\sqrt{\epsilon},\quad \forall~n\ge \ell_0\,,
\end{align}
and here the ergodicity of $\mu$ is not required. To see this, note that by Birkhoff's ergodic theorem and Egorov's theorem, $\frac{1}{k}\sum_{i=0}^{k-1}1_{V_\tau(\alpha)}\circ T^{i}$ converges to $\mbbe(1_{V_\tau(\alpha)}|\msce_T)$ almost uniformly. On the other hand, by Chebyshev's inequality,
\[ \mu\big(\{ y\in X :  \mbbe(1_{V_\tau(\alpha)}|\msce_T)(y)\ge \sqrt{\epsilon} \}\big)< \sqrt{\epsilon}\,.\]
Then \eqref{eq:An measure lower} follows easily.

Define $Q_n:=\{x\in X:\mu_x(A_n)\ge1-2\epsilon^{\frac{1}{4}}\}$. Then
$$ \mu(Q_n^c)\cdot2\epsilon^{\frac{1}{4}}\le\int_X \mu_x (A_n^c)d\mu(x)=\mu(A_n^c)<2\sqrt{\epsilon}\,,\quad \forall\,n\ge \ell_0\,.$$
Thus $\mu(Q_n)>1-\epsilon^{\frac{1}{4}}$ for $n\ge \ell_0$.


Recall that we denote $c=H_\mu(\alpha|\mscf)$. By \eqref{eq:ergodic SMB local} in Corollary~\ref{cor:SMB local}  there exists $X_0\subset X$ with $\mu(X_0)=1$ such that for $x\in X_0$, $\Xi_x:=\xi(x)\cap X_0$ and $\alpha_n:=\bigvee\limits_{i=0}^{n-1}T^{-i}\alpha$, we have:
\begin{equation}\label{eq-a:local entropy apply}
\mu_x(\Xi_x)=1, \qtext{and} \lim_{n\rightarrow +\infty}\frac{-\log\mu_x \big( \alpha_n(y) \big)}{n}=c,\ \ \forall\,y\in \Xi_x\,.
\end{equation}
Let $I=X_0\cap Q_{\ell_0}$. Then $\mu(I)=\mu(Q_{\ell_0})>1-\epsilon^{\frac{1}{4}}$.

Now fix an arbitrary $x\in I$ and let
$$B_n:=\big\{ y\in \Xi_x: \frac{-\log\mu_{x}\big( \alpha_k(y) \big)}{n}\ge c-\epsilon \text{ for any } k\ge n \big\}\,.$$
Then by \eqref{eq-a:local entropy apply} we can find  $\ell_1\ge \ell_0$ such that $\mu_{x}(B_{\ell_1})\ge 1-\epsilon^{\frac{1}{4}}$.
Let $E:=A_{\ell_1}\cap B_{\ell_1}$. From \eqref{eq:An measure lower} and the definition of $E$ we know that $\mu_{x}(E)>1-3\epsilon^{\frac{1}{4}}$.

For nonempty $V\subset X$, let $\alpha(V)$ denote the unique atom in $\alpha$ that contains $V$ once it makes sense. For any $n\ge 1$ and any $V\in \alpha_n$, $\alpha(T^k V)$ is well defined for each $0\le k<n$. Let $\rho_n$ be a {\it pseudo-metric}\footnote{More precisely, $\rho_n(V,W)=0$ may not imply $V=W$, and except for this, $\rho_n$ satisfies all the other axioms of a metric.} on $\alpha_n$ defined by
\[\rho_n(V,W):=\frac{1}{n}\cdot\#\{0\le i< n:\alpha(T^i V)\ne \alpha(T^i W)\}. \]
Observe that if $z\in B_n(y,\tau,T)$, then for any $0\le i<n$, either $T^{i}y$ and $T^{i}z$ belong to the same element of $\alpha$ or $T^{i}y\in V_\tau(\alpha)$. Hence when $y\in E$ and $n\ge \ell_1$,
\[ z\in B_n(y,\tau,T)  \implies  \rho_n(\alpha_n(y),\alpha_n(z)) <2\sqrt{\epsilon}\,.\]
In other words, if we denote the $2\sqrt{\epsilon}$-``open ball" of $W$ under $\rho_n$ by $\mclv_n(W)$,  i.e.
\[ \mclv_n(W):= \big\{V\in \alpha_n \mid \rho_n(V,W) <2\sqrt{\epsilon} \,\big\} \,\]
for each $W\in \alpha_n$, then
\begin{equation}\label{eq:Bowen ball cover}
  B_n(y,\tau,T) \subset \bigcup_{V\in \mclv_n(\alpha_n(y))} V\,, \quad \forall\,y\in E,~n\ge \ell_1\,.
\end{equation}
On the other hand, by the definition of $\mclv_n(W)$, we have:
\[ \#\mclv_n(W) \le \sum_{i=0}^{m}{n \choose i}(\#\alpha-1)^{i} < m \cdot{n \choose  m} (\#\alpha-1)^{m}\,, \qtext{for}  m= \lceil 2n\sqrt{\epsilon}\rceil \,.\]
Then Stirling's formula  implies that (see for example \cite[Page 144]{Ka}) for $\epsilon>0$ small enough, there exists $\ell_2\ge 1$ such that the following holds:
\begin{equation}\label{eq:Striling upper}
  \#\mclv_n(W) \le\exp\big( (\Delta+\epsilon)n\big)\,,\quad\forall\, n\ge\ell_2 \,,
\end{equation}
where $\Delta$ is  defined in \eqref{eq:Stirling const}.

We shall choose $D$  appearing \eqref{eq:claim} as a subset of $E$ by removing some ``bad parts'' $E_n$ for large $n$ that will be specified below. To this end, let
\[\mclf_n:=\big\{V\in \alpha_n:\mu_{x}(V)>\exp \big((-c+2(\Delta+\epsilon))n \big) \big\}\,,\]
and let
\[E_n:= \bigcup_{V\in\mclf_n}\big\{y\in E : \rho_n(\alpha_n(y), V)<2\sqrt{\epsilon}\,\big\}  \,.\]
By the definition of $\mclf_n$ and noting that $\mu_{x}(X)=1$, we have:
\[
\#\mathcal{F}_n\le\exp\big(c-2(\Delta+\epsilon))n\big)\,,\quad \forall n\ge 1\,.
\]
To estimate the size of $E_n$, for each $n\ge 1$, let
\[
\mclg_n := \bigcup_{W\in\mclf_n} \big\{V\in \mclv_n(W): \mu_{x}(V)<\exp \big((-c+\epsilon)n \big)\big\}\subset \bigcup_{W\in\mclf_n}  \mclv_n(W) \,.
\]
It follows that
\[
\#\mathcal{G}_n\le \exp\big( (\Delta+\epsilon)n \big) \cdot\#\mathcal{F}_n\le \exp\big((c-(\Delta+\epsilon))n\big)\,,\quad\forall\, n\ge\ell_2 \,.
\]
By the definition of $E_n$ and noting that for any $y\in E$,  $\mu_x(\alpha_n(y))<\exp \big((-c+\epsilon)n \big)$ holds for $n\ge \ell_1$, we obtain that
\[
E_n \subset \bigcup_{W\in\mclg_n} W \implies \mu_{x}(E_n)\le\exp\{(-c+\epsilon)n\}\cdot\#\mathcal{G}_n\le\exp(-\Delta n)\,,\quad\forall\, n\ge\ell_3 \,,
\]
where $\ell_3=\max\{\ell_1,\ell_2\}$. Then there exists $\ell\ge \ell_3$ such that $\sum_{n=\ell}^\infty \mu_{x}(E_n)<\epsilon^{\frac{1}{4}}$. Let $D=E\setminus \cup_{n=\ell}^\infty E_n$. Then $\mu_{x}(D)>1-4\epsilon^{\frac{1}{4}}$.  Given $y\in D$ and $n\ge \ell$, since $y\in E\setminus E_n$, it is clear that for each $V\in \alpha_n$ with $\rho_n(V,\alpha_n(y))<2\sqrt{\epsilon}$, one has
$$\mu_{x}(V)\le\exp\{(-c+2(\Delta+\epsilon))n\}.$$
Moreover combing this with \eqref{eq:Bowen ball cover} and \eqref{eq:Striling upper} we have
\begin{align*}
\mu_{x}\big(B_n(y,\tau,T)\big)&\le \exp\{(\Delta+\epsilon)n\}\cdot\exp\{(-c+2(\Delta+\epsilon))n\}\\
&=\exp\{(-c+3(\Delta+\epsilon))n\}.
\end{align*}
Thus for any $y\in D$ and $n\ge \ell$,
$$\frac{-\log\mu_{x}\big( B_n(y,\tau,T) \big)}{n}\ge c-3(\Delta+\epsilon)\,$$
which completes the proof.
\end{proof}

\bibliographystyle{plain}             

\end{document}